\newcommand{\multiline}[1]{%
  \begin{tabularx}{\dimexpr\linewidth-\ALG@thistlm}[t]{@{}X@{}}
    #1
  \end{tabularx}
}
\DeclareFontFamily{U}{MnSymbolC}{}
\DeclareSymbolFont{MnSyC}{U}{MnSymbolC}{m}{n}
\DeclareFontShape{U}{MnSymbolC}{m}{n}{
    <-6>  MnSymbolC5
   <6-7>  MnSymbolC6
   <7-8>  MnSymbolC7
   <8-9>  MnSymbolC8
   <9-10> MnSymbolC9
  <10-12> MnSymbolC10
  <12->   MnSymbolC12}{}
\DeclareMathSymbol{\intprod}{\mathbin}{MnSyC}{'270}
\newtheorem{theorem}{Theorem}[section]
\newtheorem{proposition}{Proposition}[section]
\theoremstyle{definition}
\theoremstyle{definition}
\newtheorem{remark}[theorem]{Remark}
\DeclareMathOperator*{\argmin}{arg\,min}
\DeclareMathOperator*{\baroplus}{\bar{\oplus}}
\DeclareMathOperator*{\vect}{{\mathrm{vec}}}
\newcommand{\RN}[1]{
    \textup{\uppercase\expandafter{\romannumeral#1}}
}
\newcommand{\lr}[1]{\left(#1\right)}
\newcommand{\bb}[1]{\mathbf{#1}}
\newcommand{\IP}[2]{\left\langle #1,#2\right\rangle}
\newcommand{\nn}[1]{\left|#1\right|}
\newcommand{\Exterior}{\mathchoice{{\textstyle\bigwedge}}%
    {{\bigwedge}}%
    {{\textstyle\wedge}}%
    {{\scriptstyle\wedge}}}
\let\originalleft\left
\let\originalright\right
\renewcommand{\left}{\mathopen{}\mathclose\bgroup\originalleft}
\renewcommand{\right}{\aftergroup\egroup\originalright}
\title{Variationally Consistent Hamiltonian Model Reduction}
\author{Anthony Gruber, Irina Tezaur}
\begin{document}

\begin{abstract}
    Though ubiquitous as first-principles models for conservative phenomena, Hamiltonian systems present numerous challenges for model reduction even in relatively simple, linear cases.  Here, we present a method for the projection-based model reduction of canonical Hamiltonian systems that is variationally consistent for any choice of linear reduced basis: Hamiltonian models project to Hamiltonian models.  Applicable in both intrusive and nonintrusive settings, the proposed method is energy-conserving and symplectic, with error provably decomposable into a data projection term and a term measuring deviation from canonical form.  Examples from linear elasticity with realistic material parameters are used to demonstrate the advantages of a variationally consistent approach, highlighting the steady convergence exhibited by consistent models where previous methods reliant on inconsistent techniques or specially designed bases exhibit unacceptably large errors. 
    
    \vspace{0.5pc}

    \emph{Keywords:} variational consistency, Hamiltonian systems, projection-based model order reduction (PMOR), operator inference, structure preservation
    
    \emph{MSC 2020:} 65P10, 74H15, 70H05
\end{abstract}

\maketitle

\section{Introduction}\label{sec:intro}
Hamiltonian systems represent a simple description of purely conservative dynamics, and are often viewed as idealized limits of realistic physical systems, e.g., the Euler equations as the zero-viscosity limit of Navier-Stokes' equation.  Despite their relative simplicity, the subclass of canonical Hamiltonian systems still exhibits many challenges for model reduction. This is especially the case for  projection-based reduced order models (ROMs), where the governing equations are projected onto a data-driven basis for full-order model (FOM) solutions.  It is straightforward to show that a na\"{i}ve basis projection will not preserve Hamiltonian structure in the ROM (c.f. \cite{peng2016symplectic}), meaning that the reduced ordinary differential equation (ODE) system, which updates the solution coefficients, will not be energy-conserving.  This means that approximations built from a na\"ive projection-based ROM will not preserve critical invariants of the FOM, leading to artificial instabilities and other unphysical behavior in time.  

To describe this more concretely, consider a canonical Hamiltonian system in the state $~x=\begin{pmatrix}~q&~p\end{pmatrix}^\intercal\in\mathbb{R}^N$, where $N = 2M\in\mathbb{R}$, comprised of position and momentum variables,
\begin{equation}\label{eq:fom}
    \dot{~x} = ~J\nabla H(~x), \quad ~x_0 = ~x(0),
\end{equation} 
where $~J = -~J^\intercal\in\mathbb{R}^{N\times N}$ is the full-rank and skew-symmetric symplectic matrix $~J = [~0\,\,~I; -~I\,\,~0]$ and $H:\mathbb{R}^n\to \mathbb{R}$ is a Hamiltonian functional \cite{Marsden_BOOK_1998}.  
 Denote the flow map of \eqref{eq:fom} by $~\varphi(t,~x_0) = ~\varphi_t(~x_0)$, so that $~\varphi_t(~x_0)$ represents the position along the integral curve of \eqref{eq:fom} beginning at $~x_0$ after time $t$.  The standard intrusive Galerkin ROM based on Proper Orthogonal Decomposition (POD) \cite{Sirovich:1987, Holmes:1996} utilizes a matrix of snapshots $~X = \{~\varphi\lr{t_i, ~x_0}\}_{i=0}^K \in \mathbb{R}^{N\times K}$, collected offline, to construct the POD basis matrix $~U\in\mathbb{R}^{N\times n}$, $n=2m$, through the truncated Singular Value Decomposition (SVD) $~X \approx ~U~\Sigma~V^\intercal$.  If $\tilde{~x}=~U\hat{~x}$ is an approximate state given by Galerkin projection onto the span of $~U$, it follows that $~U^\intercal\dot{~\varphi}_t\lr{~U\hat{~x}_0} = \dot{\hat{~x}}$ is the projected vector field and the differential equation
\begin{equation}\label{eq:galROM}
    \dot{\hat{~x}} = ~U^\intercal~J\nabla H\lr{~U\hat{~x}}, \quad \hat{~x}_0 = ~U^\intercal~x_0,
\end{equation}
defines the natural Galerkin ROM corresponding to \eqref{eq:fom}.  While this projection accomplishes the primary goal of restricting the dynamics to a low-dimensional subspace of the FOM data, it is easy to check that the Hamiltonian structure guaranteeing Lyapunov stability of the system is not preserved through this procedure, since $\lr{~U^\intercal~J}^\intercal = -~J~U \neq -~U^\intercal~J$.  This has obvious negative consequences: besides the loss of long-time stability, there is no guarantee of energy conservation or symplectic structure anymore. To see this, let $\hat{H}=H\circ\tilde{~x}$ be the reduced Hamiltonian. 
 Then, $\nabla \hat{H} = ~U^\intercal\nabla H$, and so the instantaneous energy satisfies 
\[\dot{\hat{H}} = \dot{\hat{~x}}\cdot\nabla \hat{H} = ~U^\intercal~J\nabla H \cdot ~U^\intercal\nabla H = ~P_U\nabla H\cdot~J\nabla H,\]
which likely does not vanish in view of the projection $~P_U:=~U~U^\intercal$ onto the column span of $~U$.  Additionally, the reduced dynamics are no longer symplectic: there is no closed and nondegenerate differential two-form preserved along the flow, meaning that volumes in phase space are unnaturally distorted by this evolution.  It has been shown that this lack of Hamiltonian structure in the straightforward Galerkin ROM leads to low-order systems which are unacceptably sensitive and cannot forecast beyond their training range, even in the case of periodic dynamics \cite{peng2016symplectic,gong2017structure,gruber2023canonical}. 

This unfortunate deficiency of projection-based ROMs in the Hamiltonian case has been well studied, and several solutions are available at present, each with their own benefits and drawbacks.  One popular approach involves utilizing a symplecticity-preserving approach such as the Proper Symplectic Decomposition (PSD) to compute the reduced basis $~U$, which includes basis algorithms such as the cotangent lift and complex SVD \cite{peng2016symplectic, Hesthaven:2021}. In each case, the basis $~U\in\mathbb{R}^{N\times n}$ is constructed to satisfy the equivariance condition $~U^\intercal~J = ~J_n~U^\intercal$, where $~J_n\in\mathbb{R}^{n\times n}$ is the canonical symplectic matrix in $n$ dimensions \cite{Marsden_BOOK_1998}.  
This admits the pleasing interpretation of projecting the canonical Hamiltonian FOM \eqref{eq:fom} directly to a canonical Hamiltonian ROM, which is obviously property preserving.  It also enables the direct construction of this Hamiltonian ROM through straightforward Galerkin projection, since the evolution of the reduced state becomes
\begin{align}\label{eq:cotliftROM}
    \dot{\hat{~x}} = ~U^\intercal\dot{\tilde{~x}} = ~U^\intercal~J\nabla H\lr{\tilde{~x}} = ~J_n~U^\intercal\nabla H\lr{~U\hat{~x}} = ~J_n\nabla\hat{H}\lr{\hat{~x}},
\end{align}
which is a low-order Hamiltonian system in canonical form.  
Sometimes called the symplectic lift \cite{peng2016symplectic}, this technique effectively converts the problem of preserving Hamiltonian structure in projection-based ROM with constructing an informative $~J$-equivariant reduced basis $~U\in\mathbb{R}^{N\times n}$ for the dynamics.  
However, as useful as the symplectic lift is for constructing Hamiltonian ROMs, it breaks down when the bases used are not informative enough to accurately represent system dynamics.  This is particularly noticeable in realistic mechanics simulations: when the values $~q,~p$ of position and momentum are separated by orders of magnitude, the basis $~U$ resulting from the above procedures is often not informative enough to produce a useful Hamiltonian ROM of the form \eqref{eq:cotliftROM}, and this effect persists even when these scales are equalized in the training data (c.f. Section~\ref{sbsec:linelastic}).
Moreover, the symplectic lift is only feasible for canonical Hamiltonian systems where the symplectic matrix $~J \neq ~J(~x)$ is a known constant, which limits its applicability to many systems of practical interest \cite{gruber2023canonical}.

The issues with constructing the equivariant basis necessary for symplectic lift described above led to another line of work involving ROMs which preserve the Hamiltonian of structure of \eqref{eq:fom} \cite{gong2017structure}.  In this case, the goal was to allow the use of an arbitrary reduced basis $~U$, so that the potentially high approximation error of the symplectic lift could be sidestepped and Hamiltonian ROMs could be extended to the noncanonical case.  This led to a nice strategy in \cite{gong2017structure} based on the observation that the equivariance condition $~U^\intercal~J=~J_n~U^\intercal$ is sufficient but not necessary for a  Hamiltonian ROM.  Treating this condition instead as an overdetermined system in the reduced symplectic matrix $\hat{~J}$, the skew-symmetric least-squares solution $\hat{~J}=~U^\intercal~J~U$ leads directly to the ROM
\begin{equation}\label{eq:symROM}
    \dot{\hat{~x}} = ~U^\intercal~J~U~U^\intercal\nabla H\lr{~U\hat{~x}} = \hat{~J}\nabla\hat{H}\lr{\hat{~x}}, \quad \hat{~x}_0 = ~U^\intercal~x_0.
\end{equation}
Since it is both energy-conserving and symplectic, this has the advantage of being a valid, fully determined Hamiltonian ROM for almost all bases $~U$ (c.f. Proposition~\ref{prop:nondegen}).  These properties follow since the instantaneous energy satisfies
\begin{align*}
    \dot{\hat{H}}=\dot{\hat{~x}}\cdot\nabla\hat{H} = \hat{~J}\nabla\hat{H}\cdot\nabla\hat{H}=-\nabla\hat{H}\cdot\hat{~J}\nabla\hat{H}=0,
\end{align*}
and the skew-symmetric matrix $\hat{~J}$ is constant in $~x$, meaning it satisfies the Jacobi identity equivalent to closure of the governing symplectic two-form.  The latter condition ensures that volume densities are instantaneously conserved in phase space and no artificial dissipation can be introduced into the scheme (c.f. \cite{caligan2016conservative}).  While the ROM \eqref{eq:symROM} is not canonical unless $~U$ is $~J$-equivariant, it is in some sense the reduced-order Hamiltonian system nearest to $\eqref{eq:galROM}$ for a given $~U$, making it advantageous as a structure-preserving alternative.  Moreover, the least-squares technique generating \eqref{eq:symROM} extends directly to noncanoncial Hamiltonian systems, since it does not require the construction of a special reduced basis.  On the other hand, there is clearly an additional column space projection $~P_U=~U~U^\intercal$ in \eqref{eq:symROM} which arises ``extraneously'' via the least-squares projection of $~J$ on $~U$.  It turns out that this additional projection in the reduced vector field $\dot{\hat{~x}}$ is enough to significantly harm ROM accuracy in some cases of interest, which will be seen in Section~\ref{sbsec:linelastic}.  Fundamentally, this occurs because the ROM \eqref{eq:symROM} fails to be variationally consistent with the original Hamiltonian FOM: the Hamiltonian variational principle obeyed by the reduced state $\hat{~x}$ is not the Galerkin projection of the Hamiltonian variational principle obeyed by the original state $~x$ (see Figure \ref{fig:no-commute-cartoon}), and therefore the Galerkin projection of solutions to \eqref{eq:fom} fails to satisfy the reduced equations \eqref{eq:symROM}. This necessitates a different model, developed in Sections~\ref{sec:method} and \ref{sec:opinf}, which does not require inconsistent modifications to produce a reduced-order Hamiltonian vector field.


\subsection{Related work}
Before discussing the contributions of this work in detail, it is prudent to mention several other approaches recently proposed for maintaining Hamiltonian structure in intrusive as well as nonintrusive projection-based ROMs.  Beginning with the intrusive case, Sockwell develops a Hamiltonian structure-preserving approach in \cite{Sockwell_THESIS_2019} based on the work of \cite{gong2017structure}, which targets noncanonical systems and includes a useful centering procedure for preserving linear Casimir invariants such as mass and momentum.  Buchfink \textit{et al.} derive a globally optimal symplectic reduced basis for ``canonizable'' Hamiltonian systems with a periodic solution in the sense of PSD \cite{Buchfink:2022}, and further extensions of PSD are offered by Hesthaven \textit{et al.} in \cite{Afkham:2017, Hesthaven:2021}.  More specifically, the authors of \cite{Afkham:2017} propose a greedy extension of symplectic basis generation, which promises to be more cost-effective than traditional POD and exhibits an exponential convergence rate.  The related work \cite{Hesthaven:2021} proposes a dynamical symplectic reduced basis method for low-rank Hamiltonian systems based on evolving the reduced space along a trajectory locally constrained to lie on the tangent space of the high-dimensional dynamics. Beyond purely Hamiltonian dynamics, extensions of structure-preserving model reduction methods to problems with dissipation have been the subject of several recent works including \cite{Afkahm:2019} for the case of Hamiltonian systems with dissipation and \cite{Gruber:2023} for the case of metriplectic systems which combine energy conservation with entropy generation.


In addition to intrusive model reduction approaches, recent years have seen the emergence of data-driven structure-preserving operator inference (OpInf) ROMs.  The premise of data-driven OpInf, first introduced by Peherstorfer and Willcox in 
\cite{peherstorfer2016data}, is that reduced operators in a projection-based ROM can be learned nonintrusively
(i.e., without access to the FOM operators or code) given a set of FOM snapshots by solving an optimization problem.  In this direction, several recent papers have devised OpInf algorithms which preserve Hamiltonian structure.  The first of these works \cite{sharma2022hamiltonian}, termed H-OpInf, preserves canonical Hamiltonian structure in OpInf ROMs built using a ``cotangent lift'' (i.e., PSD) reduced basis. This approach is extended to quadratic reduced bases in \cite{Sharma:2023}, towards improving accuracy and efficiency for problems exhibiting a slowly decaying Kolmogorov $n$-width \cite{Pinkus:1985}.    
Again focusing on canonical Hamiltonian systems, this second approach features a quadratic manifold cotangent lift algorithm for reduced basis construction as well as a blockwise-quadratic approximation of each of the states, $~q$ and $~p$.  This line of work is extended in \cite{gruber2023canonical} to accommodate both canonical and the more general case of noncanonical Hamiltonian systems, enabling linear Casimir-preserving Hamiltonian ROMs built with any choice of linear reduced basis.  Further extensions of Hamiltonian structure-preserving nonintrusive model reduction methods to nonlinear manifold ROMs using weakly symplectic autoencoders are developed in \cite{Buckfink:2021}, and an approach for applying OpInf to nonlinear Hamiltonian problems with polynomial non-linearities is developed in \cite{Benner:2020}. 
A more general approach for building structure-preserving ROMs for problems with symmetric negative-definite operators and linear or quadratic nonlinearities is presented in \cite{Sawant:2023}, where a Tikhonov regularization-inspired approach is introduced to improve the stability of the underlying optimization problem.  Further extensions to nonlinear Hamiltonian systems are explored in \cite{Yildiz:2024}, which hypothesizes that general Hamiltonian systems can be transformed to higher-dimensional nonlinear systems with cubic Hamiltonians through a lifting procedure \cite{Qian:2020}. Finally, a generalization of \cite{sharma2022hamiltonian} to systems that are either conservative or dissipative is proposed in \cite{Geng:2024}, which focuses on the preservation of gradient structure in OpInf ROMs.  Notably, the authors of this work also derive an \textit{a priori} error estimator for their OpInf ROMs which inspired Theorem~\ref{thm:newop} presented here.

Before concluding this literature overview, it is worth noting that (under mild regularity conditions) the canonical Hamiltonian formulation \eqref{eq:fom} has an analogous Lagrangian formulation, in which the first-order-in-time system \eqref{eq:fom} for the state $~x=\begin{pmatrix}~q&~p\end{pmatrix}^\intercal\in\mathbb{R}^{2M}$ consisting of positions and momenta is replaced with an equivalent second-order-in-time system of so-called Euler-Lagrange equations for the unknown positions $~q \in \mathbb{R}^M$.  
For linear systems with no damping, the Euler-Lagrange equations take the form 
\begin{equation} \label{eq:Lagrangian}
~M \ddot{~q} + ~K {~q} = ~f,
\end{equation}
where $~M, ~K \in \mathbb{R}^{M\times M}$ are the symmetric positive definite mass and stiffness matrices respectively, and $~f \in \mathbb{R}^M$ is an external force vector.  The Lagrangian formulation is preferred in some applications, e.g., solid dynamics, since it contains half as many degrees of freedom relative to the analogous Hamiltonian formulation; hence, a number of researchers have also developed model reduction methods that preserve Lagrangian structure.  Remarkably,   
Lall \textit{et al.} showed in \cite{Lall:2003} that performing a Galerkin projection of the second-order-in-time Euler-Lagrange equations preserves Lagrangian structure for linear and nonlinear problems, provided no hyper-reduction is employed, making structure preservation simpler in this case\footnote{For maintaining Lagrangian and Hamiltonian structure in ROMs for nonlinear Lagrangian systems, several structure-preserving hyper-reduction approaches have been developed, e.g., reduced basis sparsification (RBS) and matrix gappy POD \cite{Carlberg:2015}, Energy-Conserving Sampling and weighting \cite{Farhat:2015} and the Symplectic Discrete Empirical Interpolation Method (SDEIM) \cite{peng2016symplectic}; as this work focuses on linear problems, these methods are not discussed at length here.}.  Approaches for preserving Lagrangian structure in OpInf models, termed L-OpInf ROMs, have been considered by Sharma \textit{et al.} in the recent works \cite{Sharma:2024} and \cite{Sharma:2024_2}. 
While the basic premise of \cite{Sharma:2024} is to formulate and solve a constrained optimization problem for the reduced operators $\hat{~M}$ and $\hat{~K}$, \cite{Sharma:2024_2} proposes a two-step approach that first learns the best-fit linear Lagrangian ROM via Lagrangian operator inference and subsequently applies a structure-preserving machine learning method to learn nonlinearities in the reduced space.

\begin{remark}\label{rem:sympint}
    While continuous models predicated on Hamiltonian/Lagrangian variational principles are automatically conservative, practical FOMs/ROMs need to be advanced with a symplectic time-integrator in order to be energy-conserving along their trajectories.  This work employs the Newmark-Beta integrator for Lagrangian systems \eqref{eq:Lagrangian} and the implicit midpoint method (trapezoid rule) for Hamiltonian systems \eqref{eq:fom}, both of which ensure that energy is conserved at the fully discrete level.
\end{remark}

\subsection{Contributions}


The present work aims to combine the variational consistency of the $~J$-equivariant PSD approach \eqref{eq:cotliftROM} with the flexibility of the least-squares approach \eqref{eq:symROM} to yield a Hamiltonian ROM which outperforms previous methods on moderate-to-large scale Hamiltonian problems of interest.  In particular, the goal is to develop a Hamiltonian ROM which is variationally consistent with the governing Hamiltonian FOM \eqref{eq:fom}, allows for the use of any desired trial space basis $~U$, and which can be deployed in both intrusive and nonintrusive settings.  This necessitates a new technique, presented here and applicable to ROMs constructed using both intrusive and nonintrusive (OpInf) methods.  Its core idea is simple: while the trial space basis should be treated as fixed for optimal dimension reduction, there is flexibility in the space of test functions used to generate the ROM.  It is shown that an inspired choice of test space yields a Hamiltonian ROM which is valid for any canonical Hamiltonian FOM and requires nothing more than Petrov-Galerkin orthogonality.  
Specifically, this work contributes the following:

\begin{itemize}
    \item A procedure to generate variationally consistent Hamiltonian ROMs from canonical Hamiltonian FOMs with minimal assumptions on the trial space basis, applicable in both intrusive and nonintrusive settings.
    \item Interpretable error estimates showing the decomposition of ROM error into two distinct parts: basis projection error and deviation-from-canonicity error.
    \item Seamless incorporation of existing ROM technology, such as a centering procedure which preserves linear invariant quantities, as well as re-projection which ensures that nonintrusive OpInf ROMs pre-asymptotically recover the performance of their intrusive counterparts.
    \item 3D solid mechanical examples from linear elasticity showing improvements over existing state of the art methods for both intrusive and nonintrusive OpInf ROMs.
\end{itemize}

The remainder of this paper is organized as follows. Section \ref{sec:method} describes the core framework, leading to a novel variationally consistent intrusive Hamiltonian ROM which accommodates any desired linear reduced basis.  Section \ref{sec:opinf} extends this approach to nonintrusive OpInf ROMs, culminating in a straightforward algorithm for inferring the relevant operators from snapshot data.  An error analysis of the proposed intrusive and nonintrusive ROMs is performed in Section \ref{sec:analysis}, where interpretable error estimates for these approaches are derived and discussed.  The numerical performance of the proposed ROMs is further examined in Section \ref{sec:numerics}, focused on realistic examples from linear elasticity.  Finally, a concluding summary and a discussion of future work is presented in Section \ref{sec:conc}.

\section{Variationally Consistent Intrusive Hamiltonian ROM}\label{sec:method}



The present approach is motivated by the standard coordinate-free formulation of Hamiltonian systems, of which more details can be found in, e.g., \cite{Marsden_BOOK_1998}.  Recall that a canonical Hamiltonian system can be alternatively described as a symplectic gradient flow, defined in terms of the symplectic structure on the even-dimensional $\mathbb{R}^N$ ($N=2M$) 
induced by a canonical closed and nondegenerate two-form $\omega\in\Exterior^2\mathbb{R}^N$.  Particularly, \eqref{eq:fom} is equivalent to $\dot{~x}= ~X_H$, where $~X_H$ is the symplectic gradient of $H$ defined through the differential of the Hamiltonian function, $dH = ~X_H\intprod\omega \coloneqq \omega\lr{~X_H, -}$, which is a mapping from tangent vector fields on $\mathbb{R}^N$ to tangent vector fields on $\mathbb{R}$.  To see why this is an equivalence, notice that the negative of the canonical K\"{a}hler structure on $\mathbb{R}^N$ induces a smooth matrix field $~J\in\mathbb{R}^{N\times N}$ satisfying $\omega\lr{~v,~w} = ~v\cdot~J~w$ for any $~v,~w\in T_{~x}\mathbb{R}^N\simeq \mathbb{R}^N$ at any point $~x\in\mathbb{R}^N$, which coincides with the constant symplectic matrix $~J$ in \eqref{eq:fom}.  This implies that for any $~v\in\mathbb{R}^N$ the differential of the Hamiltonian satisfies
\[dH(~v) = ~v\cdot\nabla H = ~X_H\cdot~J~v = ~v\cdot~J^\intercal~X_H,\]
and therefore the symplectic gradient $~X_H$ is given by 
\[~X_H = ~J^{-\intercal}\nabla H = ~J\nabla H,\]
confirming that the evolution of the state is given by $\dot{~x}=~X_H=~J\nabla H$ in agreement with \eqref{eq:fom}.
\begin{remark}
    Note that, here, the implied definiteness condition $\omega\lr{~v,~J~v} = -\nn{~v}^2 <0$ for vectors $~v\neq~0$ is opposite to what is required for the canonical K\"{a}hler form on $\mathbb{R}^{N}$.  Since either choice leads to the same mathematical structure, this choice is made so that signs are consistent with the FOM $\eqref{eq:fom}$.
\end{remark}

While the expression $\dot{~x}=~X_H$ in terms of the symplectic gradient $~X_H$ may look like a straightforward rewrite of the FOM \eqref{eq:fom}, the advantage of this expression lies in its ability to convey a simple requirement for any Hamiltonian ROM derived from a canonical Hamiltonian FOM.  Particularly, it follows that a reduced Hamiltonian ROM in terms of the approximate state $\tilde{~x}=~U\hat{~x}$ must obey the low-order evolution equation
\begin{align*}
    \dot{\hat{~x}} = ~X_{\hat{H}} = \hat{~J}^{-\intercal}\nabla\hat{H},
\end{align*}
involving the reduced symplectic gradient $~X_{\hat{H}}\in\mathbb{R}^n$, $n=2m\ll N$, 
corresponding to some reduced Hamiltonian function $\hat{H}:\mathbb{R}^n\to\mathbb{R}$, and some closed and nondegenerate symplectic two-form  $\hat{\omega}\in\Exterior^2\mathbb{R}^n$ defining the reduced symplectic matrix $\hat{~J}=~U^\intercal~J~U\in\mathbb{R}^{n\times n}$.  Importantly, notice that this reduced symplectic form $\hat{\omega}$ need not coincide with the canonical symplectic form on $\mathbb{R}^{n}$, so that $\hat{~J}=~B^\intercal~J_n~B$ can be conjugate to the canonical symplectic matrix $~J_n\in\mathbb{R}^{n\times n}$ by an \textit{unknown} invertible matrix $~B\in\mathbb{R}^{n\times n}$.  In particular, this means that the transpose of the reduced symplectic matrix $\hat{~J}^\intercal=-\hat{~J}$ may not be the inverse of $\hat{~J}$ anymore, i.e. $\hat{~J}^\intercal\neq\hat{~J}^{-1}$. This represents the fact that the reduced symplectic form $\hat{\omega}$, and hence the reduced symplectic matrix $\hat{~J}$, are allowed to be noncanonical on the reduced space $\mathbb{R}^n$.  This constrasts significantly with the case of $~J$-equivariant reduced bases $~U$, where $\hat{~J}=~J_n$ and so the canonicity condition $\hat{~J}^{-1}=\hat{~J}^\intercal$ is satisfied by construction.

In view of this discussion, there are two main ingredients to the variationally consistent Hamiltonian ROM presented here.  The first, and most essential, is a way to manufacture a reduced symplectic gradient $~X_{\hat{H}}\in\mathbb{R}^n$ from the full order symplectic gradient $~X_H\in\mathbb{R}^N$ using only projection onto orthonormal reduced bases.  This will be accomplished via an appropriately chosen Petrov-Galerkin projection, which effectively replaces the symplectic matrix $~J$ on the right-hand side of \eqref{eq:fom} with a reduced inverse-transpose $\hat{~J}^{-\intercal}$, making the resulting ROM variationally consistent with the FOM by construction.  This is remarkably different from the case of standard Galerkin projection onto the trial space spanned by the columns of $~U$, where the resulting ROM will not be Hamiltonian unless the reduced basis $~U$ is $~J$-equivariant (c.f. Figure~\ref{fig:no-commute-cartoon}).  The second ingredient is a useful but nonessential centering procedure in the trial space basis $~U$, which is commonly used in POD ROMs and ensures that the value of the Hamiltonian function $H$ is correct at the beginning of the ROM trajectory, and hence will remain correct throughout its evolution.  While this particular centering has previously been used only for intrusive ROMs, its applicability is extended to the nonintrusive OpInf case here (see Section \ref{sbsec:nonint-centering}), which may be of independent interest.  
Combined with the idea of re-projection \cite{peherstorfer2020sampling}, this enables a nonintrusive Hamiltonian OpInf ROM that pre-asymptotically recovers the intrusive ROM, enabling its use in the case that only snapshot data are available.

\begin{figure}[htb]
    \centering
    \includegraphics[width=0.7\textwidth]{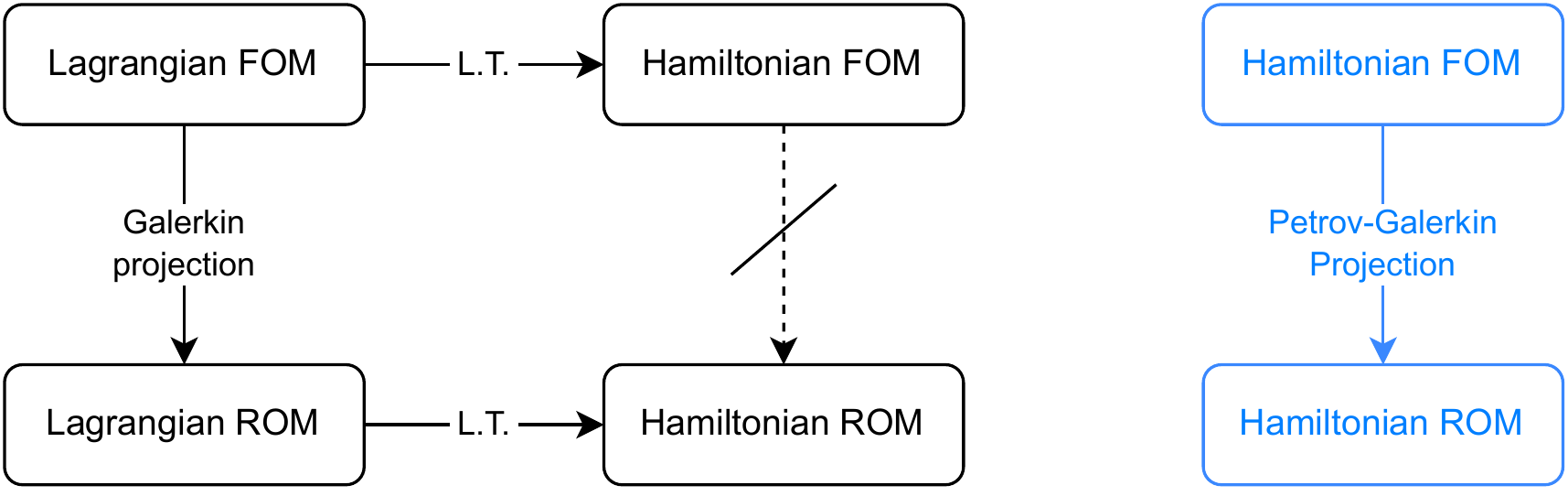}
    \caption{A diagram illustrating the behavior of Hamiltonian/Lagrangian FOMs under Galerkin projection.  While (canonical) Hamiltonian and Lagrangian systems which are ``regular'' are equivalent under the Legendre transformation (L.T.) at both the FOM and ROM levels, this equivalence does not commute with Galerkin projection onto a general reduced basis $~U\in\mathbb{R}^{N\times n}$.  Conversely, the variationally consistent ROM presented here projects Hamiltonian FOMs to Hamiltonian ROMs in every case.}
    \label{fig:no-commute-cartoon}
\end{figure}





\subsection{Variationally consistent Petrov-Galerkin projection}
The particular basis projection employed here exploits the fact that the symplectic structure of canonical Hamiltonian systems is nondegenerate everywhere, meaning that the symplectic matrix $~J$ has full matrix rank.  Therefore, given any trial space basis $~U\in\mathbb{R}^{N\times n}$, the columns of the ``rotated'' matrix $~J~U$ form a valid test space basis of rank $n$, meaning the Petrov-Galerkin projection $\lr{~J~U}^\intercal\dot{~\varphi}_t\lr{~U\hat{~x}} = ~U^\intercal~J^\intercal~U\dot{\hat{~x}}=\hat{~J}^\intercal\dot{\hat{~x}}$ leads to a valid Hamiltonian ROM provided that the reduced symplectic matrix $\hat{~J} = ~U^\intercal~J~U$ remains full rank.
Noting that $~J^\intercal~J = -~J^2 = ~I$, this ROM is expressible as
\begin{equation}\label{eq:newROM}
   \hat{~J}^\intercal\dot{\hat{~x}} = ~U^\intercal\nabla H\lr{~U\hat{~x}} = \nabla\hat{H}(\hat{~x}), \quad \hat{~x}_0 = ~U^\intercal~x_0,
\end{equation}
which is equivalent, at least formally, to the expression $\dot{\hat{~x}}=\hat{~J}^{-\intercal}\nabla\hat{H}\lr{\hat{~x}}$ from before.  However, it remains to be sure that the differential equation \eqref{eq:newROM} uniquely specifies the evolution of the reduced order state $\hat{~x}$, since the full rank condition on $\hat{~J}$ is not obvious.  The following result shows that this condition is indeed satisfied for almost every orthonormal trial space basis $~U$ of even dimension that may be constructed.

\begin{proposition}\label{prop:nondegen}
    The reduced-order symplectic matrix $\hat{~J}=~U^\intercal~J~U\in\mathbb{R}^{2m\times 2m}$ is invertible for almost every column-orthonormal reduced basis $~U\in\mathbb{R}^{2M\times 2m}, ~U^\intercal~U=~I$.  
\end{proposition}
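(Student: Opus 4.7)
The plan is to show that $U\mapsto \det\lr{~U^\intercal~J~U}$ is a nontrivial real-analytic function on the Stiefel manifold
\[V_{2m}\lr{\mathbb{R}^{2M}} = \{~U\in\mathbb{R}^{2M\times 2m}: ~U^\intercal~U = ~I\},\]
from which the conclusion will follow via the classical fact that the zero locus of a nonzero real-analytic function on a connected real-analytic manifold has measure zero with respect to any smooth measure — in particular with respect to the orthogonally invariant measure on $V_{2m}\lr{\mathbb{R}^{2M}}$ induced from the Haar measure on $O(2M)$. First I would observe that $\det\lr{~U^\intercal~J~U}$ is manifestly polynomial in the entries of $~U$, and that $V_{2m}\lr{\mathbb{R}^{2M}}$ is a compact real-analytic submanifold of $\mathbb{R}^{2M\times 2m}$ cut out by the polynomial constraints $~U^\intercal~U - ~I = ~0$; hence the restricted map is real-analytic. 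Moreover, $V_{2m}\lr{\mathbb{R}^{2M}}$ is connected whenever $2m<2M$, since it is realized as the homogeneous space $O(2M)/O(2M-2m)$ whose stabilizer contains orientation-reversing reflections.

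The remaining task is to exhibit a single orthonormal $~U$ at which $\hat{~J}$ is invertible. The natural candidate is the ``cotangent-lift'' basis used throughout the PSD literature: taking $~U$ to consist of the first $m$ canonical position and first $m$ canonical momentum basis vectors arranged in block form, one computes directly that $~U^\intercal~J~U = ~J_n$, the canonical symplectic matrix on $\mathbb{R}^{2m}$, which is invertible with determinant $1$. This shows that the analytic function $U\mapsto \det\lr{~U^\intercal~J~U}$ does not vanish identically on $V_{2m}\lr{\mathbb{R}^{2M}}$.

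Combining the two ingredients, the set $\{~U\in V_{2m}\lr{\mathbb{R}^{2M}}: \det\lr{~U^\intercal~J~U} = 0\}$ is a proper real-analytic (in fact, real algebraic) subvariety of a connected analytic manifold, and hence has measure zero. I do not anticipate any serious technical obstacle: the only subtlety is conceptual, namely specifying which measure is meant by ``almost every,'' and once the natural invariant measure on the Stiefel manifold is fixed, the argument reduces to verifying that the polynomial $\det\lr{~U^\intercal~J~U}$ is not the zero polynomial — which the cotangent-lift example does explicitly.
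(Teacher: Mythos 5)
Your proposal is correct and takes essentially the same approach as the paper's proof: both observe that $\det\lr{~U^\intercal~J~U}$ is a polynomial (hence real-analytic) in the entries of $~U$, exhibit the same block cotangent-lift frame $~U = \begin{pmatrix}~e_1 & \dots & ~e_m & ~e_{M+1} & \dots & ~e_{M+m}\end{pmatrix}$ with $\hat{~J} = ~J_n$ and determinant $1$ to show it is not identically zero, and conclude that its zero set has measure zero in the Stiefel manifold. Your extra care regarding connectedness and the choice of invariant measure only makes explicit what the paper leaves implicit.
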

\begin{proof}
    Let $N=2M$ and $n=2m$.  Denote by $S$ the subset of the Stiefel manifold $V_n\lr{\mathbb{R}^N} = \{~U\in \mathbb{R}^{N\times n}\,\,|\,\,~U^\intercal~U=~I\}$ of orthonormal $n$-frames in $\mathbb{R}^N$ satisfying $\mathrm{det}\,\hat{~J}=\mathrm{det}\lr{~U^\intercal~J~U}\neq 0$ for all $~U\in S$, i.e. $S=\{~U\in V_n\lr{\mathbb{R}^N}\,|\,\mathrm{det}\,\hat{~J}\neq 0\}$.  This subset is Zariski-open since $\mathrm{det}\,\hat{~J}$ is a polynomial in the entries of $~U$ and nonempty since $~U = \begin{pmatrix}~e_1 & ... & ~e_m & ~e_{M+1} & ... & ~e_{M+m}\end{pmatrix}$ satisfies $\mathrm{det}\,\hat{~J} = \mathrm{det}\,~J_n = 1\neq 0$, where $~e_i$ denotes the $i^{th}$ standard basis vector in $\mathbb{R}^{2M}$.  
    It follows that $S$ is dense in the Stiefel manifold $V_n\lr{\mathbb{R}^N}$, and since polynomials are real-analytic, hence cannot vanish on a set of positive measure without being identically zero, it must be that $S$ has full Lebesgue measure in this space.  Therefore, almost every reduced basis $~U\in V_n\lr{\mathbb{R}^N}$ lies in $S$, as claimed.
\end{proof}

\begin{remark}
    Bases $~U$ for vector subspaces on which the reduced symplectic form $\hat{\omega}$ pulls back to be identically zero, e.g. $~U=\begin{pmatrix}~e_1 & ... & ~e_m\end{pmatrix}$, are called isotropic and can be characterized by the vanishing of the reduced symplectic matrix $\hat{~J}\equiv~0$.  More generally, bases $~U$ for vector subspaces on which the determinant $\mathrm{det}\,\hat{~J}=0$ is zero cannot be symplectic themselves since this implies that $\hat{~J}$ has a nontrivial kernel.  Therefore, these subspaces must be excluded from consideration when building any canonical Hamiltonian ROM.  On the other hand, Proposition~\ref{prop:nondegen} shows that, given a reduced basis $~U$, the reduced symplectic matrix $\hat{~J}$ will be invertible with probability one, which is likely why this technical point is not considered in previous work on Hamiltonian ROMs.
\end{remark}

Notice that the proposed ROM \eqref{eq:newROM} is both energy conserving and symplectic: the reduced symplectic matrix $\hat{~J}$ is obviously constant in $\hat{~x}$, implying closure in the reduced symplectic 2-form $\hat{\omega}\lr{\hat{~v},\hat{~w}}=\hat{~v}\cdot\hat{~J}\hat{~w}$, and the skew symmetry in $\hat{~J}^{-\intercal}=-\hat{~J}^{-1}$ implies that the instantaneous Hamiltonian satisfies
\[\dot{\hat{H}} = \dot{\hat{~x}}\cdot\nabla\hat{H} = \hat{~J}^{-\intercal}\nabla{\hat{H}}\cdot\nabla\hat{H} = -\nabla\hat{H}\cdot\hat{~J}^{-\intercal}\nabla\hat{H} = 0.\]
Remarkably, this variationally consistent ROM is also noncanonical unless $~U^\intercal~J=\hat{~J}~U^\intercal$, in which case $~U^\intercal~J~U=~J_n$ and \eqref{eq:newROM} reduces to the same canonical Hamiltonian ROM \eqref{eq:cotliftROM} produced by a PSD or symplectic lift.  This use of a noncanonical reduced symplectic matrix $\hat{~J}=~U^\intercal~J~U$ resulting from an arbitrary basis $~U$ is similar to the approach in \cite{gong2017structure} leading to the ROM \eqref{eq:symROM}, although now it is apparent that the direct use of the symplectic matrix $\hat{~J}$ in defining the reduced vector field $\dot{\hat{~x}}$ is kinematically inconsistent with the FOM; what is instead needed for consistency is its inverse-transpose $\hat{~J}^{-\intercal}$, which only reduces to $\hat{~J}$ in the special case of a symplectic lift.  The form of the ROM \eqref{eq:newROM} also highlights the remarkable fact that a canonical Hamiltonian ROM is not always appropriate even when reducing a canonical Hamiltonian FOM, meaning that the flexibility $\hat{~J}=~B^\intercal~J_n~B$ of conjugation by an invertible matrix $~B$ afforded in the reduced symplectic matrix may actually increase expressiveness in the resulting Hamiltonian ROM. 


\begin{remark}
    When the FOM \eqref{eq:fom} is equivalently described as $~J^\intercal\dot{~x}=\nabla H(~x)$, again using the nondegeneracy of the symplectic structure, there is an interesting dual interpretation of the ROM \eqref{eq:newROM} as the natural Galerkin ROM with trial/test space given by the image of $~U$ and denoted by $\mathrm{im}\,~U$.  This again displays its variational consistency, since the Hamiltonian ROM \eqref{eq:newROM} is derived directly from a Hamiltonian FOM through a straightforward basis projection.
\end{remark}



\subsection{Intrusive centering}\label{sbsec:int-centering}

At this point, a variationally consistent intrusive Hamiltonian ROM \eqref{eq:newROM} has been presented which is valid for almost any orthonormal reduced basis $~U$ and is guaranteed to conserve the energy level  $\hat{H}\lr{\hat{~x}_0}=H\lr{~P_U~x_0}$ exactly.  Since $H\lr{~P_U~x_0}$ is not necessarily equal to the true energy level $H\lr{~x_0}$ of the corresponding FOM trajectory, it can be beneficial to introduce a well known centering procedure which ensures that this discrepancy is eliminated.  While this centering will be formulated in terms of the initial state $~x_0$, note  that all benefits of the centering discussed here remain valid when $\bar{~x}$ is a linear combination of state snapshots $\{~x_i\}$, say, the mean initial condition along several trajectories.  This can be beneficial when perturbations from the mean are easier to resolve than perturbations from any particular trajectory.


In the intrusive case, the technique of using a centered POD basis for ROM development is well-known and has many benefits.  In \cite{Gunzburger:2007, Barnett:2022, deCastro:2022, deCastro:2023} it is used to enforce inhomogeneous Dirichlet boundary conditions in Galerkin ROMs, while in \cite{Sockwell_THESIS_2019,gong2017structure,gruber2023canonical} it is used for the purpose of preserving linear invariants in Hamiltonian ROMs.  The main idea at work is the inclusion of an affine shift in the basis $~U$ so that it becomes ``centered around zero'' in whatever way is appropriate for a given application.  This is most easily accomplished by choosing $~U$ through the truncated SVD of a centered snapshot matrix $~X-\bar{~X} = ~U~\Sigma~V^\intercal$, where $\bar{~X}$ is a matrix each column of which is a judiciously chosen constant vector $\bar{~x}$.  In the case of Hamiltonian systems, it is advantageous to choose this centering vector $\bar{~x}=~x_0$ to be the initial condition for the Hamiltonian flow, as this ensures that the ROM approximation $\tilde{~x}=~x_0 + ~U\hat{~x}$ satisfies the zero initial condition $\hat{~x}_0=~0$ and maintains the true initial energy value $\hat{H}\lr{\hat{~x}_0}=H\lr{~x_0}$ along the entire ROM trajectory.

Before moving to the discussion of OpInf ROMs, it is worth mentioning another benefit of centered POD important for noncanonical Hamiltonian systems $\dot{~x}=~J\lr{~x}\nabla H\lr{~x}$ where $~J(~x)$ may have a nontrivial kernel.  In \cite{Sockwell_THESIS_2019} it was observed that centering of the reduced basis $~U$ by the initial condition $~x_0$ has the added benefit of preserving linear invariant quantities of a Hamiltonian FOM system, known as linear Casimir invariants (or Casimirs), in a corresponding Hamiltonian ROM.  To see why this is the case, first notice that a linear invariant can be written as $C(~x) = ~c^\intercal~x$ where $~c = \nabla C\in\mathbb{R}^N$ is a constant vector.  It follows that the Casimir
\[C\lr{~x_0 + ~U\hat{~x}} = ~c^\intercal~x_0 + ~c^\intercal~U\hat{~x},\]
will be conserved at the value $~c^\intercal~x_0$ along the ROM trajectory $\hat{~x}$ provided the POD basis $~U$ is constructed so that $~c^\intercal~U\hat{~x}=0$.  Moreover, this will automatically be true when $~U$ is defined through the truncated SVD $~X-~X_0 \approx ~U~\Sigma~V^\intercal$: since each column $~u_i$ of $~U$ takes the form $~u_i=\lr{~X-~X_0}\sigma_i^{-1}~v_i$ in terms of a corresponding singular value $\sigma_i$ and column $~v_i$ of $~V$, it follows that 
\[~c^\intercal~U\hat{~x} = \sum_{i=1}^n \hat{x}^i~c^\intercal~u_i = \sum_{i=1}^n \hat{x}^i~c^\intercal\lr{~X-~X_0}\sigma_i^{-1}~v_i = 0,  \]
where $\hat{x}^i$ denotes the $i^{\mathrm{th}}$ component of the reduced state $\hat{~x}$ and the final equality uses the fact that $~c^\intercal\lr{~X-~X_0}=~0$, since all columns of the snapshot matrix $~X$ lie on the same Casimir-preserving trajectory as $~x_0$.  While not immediately relevant for the ROMs formulated here, this useful fact enables Casimir-preserving schemes which are essential for realistic physics in more complicated Hamiltonian systems, the variationally consistent model reduction of which is the subject of future work.

\begin{remark}
    To improve expressivity of the basis $~U$, it is occasionally useful to include additional snapshots in the matrix $~X$, such as snapshots of $\nabla H$.  When linear Casimir preservation is important, care has to be taken to ensure that these added snapshots still satisfy the conservation condition $~c^\intercal~U\hat{~x}=0$.  One simple way to accomplish this is to project out the component of $\nabla H$ in the direction of $~c$, so that $\nabla^{||}H = ~P_c^\perp\nabla H = \lr{~I-\frac{~c~c^\intercal}{|~c|^2}}\nabla H$ is included in the snapshot matrix instead of $\nabla H$ (c.f. \cite{Sockwell_THESIS_2019}).  This is equivalent to using the natural (i.e. Riemannian) gradient of the Hamiltonian $H$ which is tangent to the linear subspace $\{~x\,|\,~c^\intercal~x=0\}\subset \mathbb{R}^N$.
\end{remark}

\section{Variationally Consistent Hamiltonian Operator Inference}\label{sec:opinf}
Recall that operator inference (OpInf) \cite{peherstorfer2016data} is a useful procedure by which the operators governing a POD ROM are learned directly from snapshot data for the FOM system, ensuring that the resulting ROM is solvable without intrusion into the FOM code.  To apply the proposed variationally consistent Hamiltonian ROM \eqref{eq:newROM} in nonintrusive OpInf settings, it is necessary to understand which challenges with OpInf for Hamiltonian systems have already been addressed and which remain to be overcome.  Here, some previous work on Hamiltonian OpInf is recalled, and an algorithm is presented which enables the use of the variationally consistent approach presented in Section~\ref{sec:method} in cases where only snapshot data are available.






\subsection{OpInf for canonical Hamiltonian systems}\label{sbsec:opinf-old}
Both of the Hamiltonian ROMs \eqref{eq:symROM} and \eqref{eq:newROM} can be effectively treated with OpInf techniques, provided that the nonlinear part of $\nabla H$ is accessible online.  In many examples of interest this is not a large restriction, since the relevant nonlinearities are vectorized over each degree of freedom separately (e.g., polynomials, $\sin(~x)$), and can be easily simulated without understanding of the underlying discretization or access to the FOM code.  Conversely, the linear part of $\nabla H$ is heavily discretization-dependent and generally impossible to simulate without understanding of the full-order scheme or access to its code.  To see how a Hamiltonian OpInf problem is set up in this case, consider writing the Hamiltonian in a way which separates out its quadratic part,
\[H(~x) = \frac{1}{2}~x^\intercal~A~x + f(~x),\]
where $~A\in\mathbb{R}^{N\times N}$ is constant in $~x$ and $f(~x):\mathbb{R}^N\to\mathbb{R}$ is a nonquadratic and likely nonlinear function of the state $~x$.  In this form, the gradient of the Hamiltonian $\nabla H(~x) = ~A~x + \nabla f(~x)$ separates as well, so that if the nonlinear part $\nabla f$ of $\nabla H$ is accessible then the only quantity to be inferred is $~A$.  Accomplishing this inference in a way which is compatible with the least-squares Hamiltonian ROM \eqref{eq:symROM} has already been accomplished in \cite{sharma2022hamiltonian,gruber2023canonical}, so it remains to discuss this technique with the aim of adapting it to the case of the variationally consistent Petrov-Galerkin ROM \eqref{eq:newROM} from Section~\ref{sec:method}.  

Recalling the canonical Hamiltonian operator inference (C-H-OpInf) procedure in \cite{gruber2023canonical}, for an uncentered approximation $\tilde{~x}=~U\hat{~x}$ of the state, the goal is to infer the linear part $\hat{~A} = ~U^\intercal~A~U$ of the reduced Hamiltonian gradient $\nabla \hat{H}\lr{~x} = ~U^\intercal\nabla H(~U\hat{~x})$ under the assumption that $\hat{~A}=\hat{~A}^\intercal$ is symmetric and the nonlinear part $\nabla \hat{f}\lr{\hat{~x}} = ~U^\intercal\nabla f\lr{~U\hat{~x}}$ is either polynomial or can be simulated online.  In this case, denoting the reduced state snapshot matrix by $\hat{~X} = ~U^\intercal~X$, the reduced nonlinearity snapshot matrix by $\hat{\nabla}f\lr{~X} = ~U^\intercal\nabla f\lr{~X}$, and an approximation to the flow map velocity by $D_t\lr{~x}\approx \dot{~\varphi}_t\lr{~x_0} = \dot{~\varphi}_0\lr{~x}$, the problem to be solved is
\[ \bar{~A} = \argmin_{\hat{~A}\in\mathbb{R}^{n\times n}}\,\nn{~U^\intercal D_t\lr{~X} - \hat{~J}\lr{\hat{~A}\hat{~X} + \hat{\nabla} f\lr{~X}}}_F^2 \quad \mathrm{s.t.}\quad \hat{~A}^\intercal=\hat{~A}, \]
which is convex and has a unique global minimum provided $\hat{~X}$ has maximal rank (note that $\mathrm{rank}\,\hat{~J}=n$ by construction).  Here, the velocity snapshot matrix $D_t\lr{~X}$ is an abuse of notation containing true or approximate time derivatives in each column, so that $D_t\lr{~x_k} \approx \dot{~\varphi}_0\lr{~x_k}$ at column $k$.  In fact, letting $\hat{~F}=\hat{\nabla}f\lr{~X}$ denote the reduced nonlinearity snapshots and $\hat{~X}_t = ~U^\intercal D_t\lr{~X}$ denote the approximate velocity field snapshots for brevity, straightforward differentiation from the obvious Lagrangian function shows that this minimization is solvable through the equivalent linear system
\begin{align}\label{eq:symopinf}
    \lr{\hat{~J}^\intercal\hat{~J}\baroplus\hat{~X}\hat{~X}^\intercal}\vect\bar{~A} =\vect\lr{\hat{~J}^\intercal\hat{~X}_t\hat{~X}^\intercal+\hat{~X}\hat{~X}^\intercal_t\hat{~J} - \hat{~J}^\intercal\hat{~J}\hat{~F}\hat{~X}^\intercal - \hat{~X}\hat{~F}^\intercal\hat{~J}^\intercal\hat{~J}},
\end{align}
where $\vect$ denotes the column-wise vectorization operator, $\otimes$ denotes the Kronecker product of matrices, and $~A\baroplus~B = ~A\otimes~B + ~B\otimes ~A$.  More details about the Kronecker product and vectorization can be found in, e.g., \cite{schacke2004kronecker}.  Moreover, note that many FOM systems of interest, including those in Section~\ref{sbsec:linelastic}, automatically compute the exact velocities $\dot{~\varphi}_0(~x)$, which can significantly improve the accuracy of the resulting OpInf ROM (c.f. Theorem~\ref{thm:newop}).



\begin{remark}
    In the case that a symplectic lift basis $~U$ is used, the inference in \eqref{eq:symopinf} reduces to the original H-OpInf procedure of \cite{sharma2022hamiltonian}.
\end{remark}

Once $\bar{~A} \approx \hat{~A}$ has been inferred by solving \eqref{eq:symopinf}, C-H-OpInf simulates the least-squares Hamiltonian ROM \eqref{eq:symROM} online as 
\begin{align*}
    \dot{\hat{~x}} = \hat{~J}\lr{\bar{~A}\hat{~x} + \nabla \hat{f}\lr{\hat{~x}}}, \qquad \hat{~x}_0 = ~U^\intercal~x_0.
\end{align*}
While this minimally intrusive ROM has been shown in \cite{gruber2023canonical} to yield good results in several cases of interest, especially after making the approximation that $\hat{~J}^\intercal\hat{~J}\approx~I$ which is exact for $~J$-equivariant bases, it suffers from the same difficulties as the intrusive ROM \eqref{eq:symROM} which uses the least-squares solution $\hat{~J}=~U^\intercal~J~U$, rendering it ineffective on moderate-to-large-scale linear elastic problems frequently considered in practice (c.f. Section~\ref{sbsec:linelastic}).  Even on problems where the least-squares ROM \eqref{eq:symROM} works well, the OpInf counterpart using the gradient approximation $\bar{~A}\approx\hat{~A}$ often plateaus in accuracy due to nontrivial optimization error \cite{sharma2022hamiltonian,gruber2023canonical}.  Thankfully, the work in \cite{peherstorfer2020sampling} showed that there is a minimally intrusive way to remove this error almost entirely, which will now be discussed alongside the modifications necessary to develop an OpInf ROM for the variationally consistent ROM \eqref{eq:newROM}.

\subsection{Re-projection}\label{sbsec:reproj}
While OpInf is a useful technique for generating non-intrusive reduced models, the original procedure in \cite{peherstorfer2016data} will generally never pre-asymptotically recover the intrusive Galerkin ROM.  This is a straightforward consequence of how the FOM flow map $~\varphi_t(~x)$ interacts with Galerkin projection.  More precisely, notice that when  $\dot{~\varphi}_t(~x_0) = \dot{~x}(t)\coloneqq ~f\lr{~x(t)}$ describes the FOM dynamics, then both the reduced vector field $~U^\intercal~f\lr{~U\hat{~x}}$, corresponding to the (uncentered) intrusive ROM $\dot{\hat{~x}} = ~U^\intercal~f\lr{~U\hat{~x}}$, as well as the $~U$-image of the reduced flow map $~U\hat{~\varphi}_t\lr{\hat{~x}_0} = ~P_U~\varphi_t\lr{~U\hat{~x_0}}$, contain no component orthogonal to the image of the reduced basis $~U$.  Conversely, integrating the FOM \eqref{eq:fom} for any length of time will almost certainly leave the image of $~U$ even when the initial condition is contained there.  This can be seen by letting $~V$ be a basis for the orthogonal complement of $~U$ (so that $~U^\intercal~V=~0$) and expressing the flow at time $t$ with initial condition $~P_U~x_0$ as 
\[~\varphi_t\lr{~P_U~x_0} = ~P_U~\varphi_t\lr{~P_U~x_0} + ~P_V~\varphi_t\lr{~P_U~x_0}.\]
In particular, neither the state $~x=~\varphi_t\lr{~P_U~x_0}$ nor its velocity $\dot{~x} = \dot{~\varphi}_t\lr{~P_U~x_0}$ lie entirely in the image of $~U$ when $~P_V~\varphi_t\lr{~P_U~x_0}$ is nonzero, and this has consequences for standard OpInf.

To see why this is the case, recall that standard OpInf makes use of (approximations to) the velocity data $~U^\intercal D_t\lr{~X} \approx ~U^\intercal\dot{~\varphi}_0\lr{~X} = ~U^\intercal~f\lr{~X}$, each column of which looks like $~U^\intercal~f\lr{~x_k}$ corresponding to the point $t=k\Delta t$ in time.  While this quantity lies squarely in the image of $~U$, it does not represent the velocity field of any reduced flow map.  This can be seen by writing the state $~x$ at any time $t$ as 
\begin{align*}
    ~x=~P_U~x + ~P_V~x\coloneqq ~U\hat{~x} + ~V\hat{~y},
\end{align*}
in terms of coefficient vectors $\hat{~x}=~U^\intercal~x$ and $\hat{~y}=~V^\intercal{~x}$.
It follows that the vector field $~U^\intercal~f\lr{~x}$ admits the expression
\begin{align*}
    ~U^\intercal~f\lr{~x} = ~U^\intercal~f\lr{~U\hat{~x}+~V\hat{~y}} \neq \hat{~f}\lr{\hat{~x}},
\end{align*}
which is not equal to a low-dimensional vector field $\hat{~f}\lr{\hat{~x}}$ for any choice of $\hat{~f}$ due to the presence of $~P_V~x$, the portion of the state $~x$ in the image of $~V$.  This reflects the fact that $~P_U~\varphi_{\Delta t}\lr{~x} \neq ~P_U~\varphi_{\Delta t}\lr{~P_U~x}$ at the level of the FOM flow mapping, since integration in time is not restricted to the image of $~U$, and also means that a correction to the standard OpInf procedure is required to obtain pre-asymptotic recovery of the operators governing the intrusive ROM.


To remedy this issue, a minimally intrusive strategy was given in \cite{peherstorfer2020sampling}, termed ``re-projection'' and viewed through the lens of the Mori-Zwanzig formalism \cite{Parish:2017}.  The idea behind this approach is to collect snapshot data for the FOM vector field $~f$ at the re-projected states $~P_U~X$ instead of (or in addition to) the original states $~X$, in which case the ``non-Markovian'' snapshot matrix $~U^\intercal D_t\lr{~X} \approx ~U^\intercal\dot{~\varphi}_t\lr{~X} = ~U^\intercal~f\lr{~X}$ appearing in the OpInf minimization can be replaced with the ``Markovian'' matrix $~U^\intercal D_t\lr{~P_U~X} \approx ~U^\intercal\dot{~\varphi}_t\lr{~P_U~X}$, where we have abused notation slightly by vectorizing the velocity map $\dot{~\varphi}_t$ column-wise.  Notice that this precisely addresses the issue seen before: instead of drawing snapshots from the projected FOM vector field $~U^\intercal~f\lr{~U\hat{~x}+~V\hat{~y}}$ containing contributions from the image of $~V$, snapshots are drawn from the reduced vector field $~U^\intercal~f\lr{~P_U~x} = ~U^\intercal~f\lr{~U\hat{~x}}$ which depends only on the image of $~U$.  This ensures that the time derivatives used in the OpInf procedure correspond directly to the targeted model form matching the intrusive ROM, allowing for lossless recovery of the intrusive operator independent of the reduced dimension $n$ when these derivatives are exact, i.e. $D_t = \dot{~\varphi}_0$.  The connection to statistical dynamics mentioned in \cite{peherstorfer2020sampling} comes from considering the single-step flow map $~x_{k+1} = ~\varphi_{\Delta t}\lr{~x_k}$, where with similar reasoning to the above it can be seen that the projection $~P_U~\varphi_{\Delta t}\lr{~x_k}$ will generally depend on the entire time history of the state $~x_k$ and therefore cannot be a low-dimensional flow map itself.  The algorithm in \cite{peherstorfer2020sampling} shows that re-projection successfully mitigates the optimization error in OpInf provided the user is able to collect solution snapshots with pre-specified initial conditions.

\begin{remark}
    In the case that the derivative operator $D_t \approx \dot{~\varphi}_0$ is not exact but, e.g., approximated with finite differences, then care must be taken to construct and differentiate a new trajectory using the single-step ROM flow map approximation $~P_U~\varphi_{\Delta t}\lr{~P_U~x} \approx ~U\hat{~\varphi}_{\Delta t}\lr{~U^\intercal~x}$.  Then, the approximate velocity is calculated as $D_t\lr{~x} = \mathcal{FD}_{\Delta t}\lr{~P_U~\varphi_{\Delta t}\lr{~P_U~x}}$ where $\mathcal{FD}_{\Delta t}$ denotes the finite difference approximation with increment $\Delta t$, which converges to $\dot{~\varphi}_0\lr{~x}$ as the increment size $\Delta t\to 0$ approaches zero.  In particular, $D_t\lr{~x}$ is not linear in the state $~x$, and applying the differential operator $\mathcal{FD}_{\Delta t}$ requires samples along the re-projected trajectory defined by $~P_U~\varphi_{\Delta t}~P_U$.
\end{remark}

\subsection{Nonintrusive centering}\label{sbsec:nonint-centering}
It will be desirable to apply centering in the variationally consistent Hamiltonian OpInf ROM for the same reasons as discussed in Section \ref{sbsec:int-centering}.  However, this requires some extension of the ideas discussed there, since it is commonly assumed in OpInf that the approximate state $\tilde{~x}$ is uncentered, and therefore defined by Galerkin projection $\tilde{~x} = ~U\hat{~x}$ directly onto the span of the (uncentered) reduced basis $~U$.  The reason for this is as follows: using a linear FOM $\dot{~x}=~M~x$ where $~M\in\mathbb{R}^{N\times N}$ is a constant matrix operator for illustration, the usual intrusive Galerkin ROM with centered approximation $\tilde{~x}=~x_0+~U\hat{~x}$ is given by 
\[\dot{\hat{~x}} = ~U^\intercal~M~x_0 + ~U^\intercal~M~U\hat{~x} = ~U^\intercal~M~x_0 + \hat{~M}\hat{~x}, \quad \hat{~x}_0 = ~0.\]
In the uncentered case, the reduced matrix operator $\hat{~M} = ~U^\intercal~M~U$ is straightforward to infer without knowledge of $~M$ following the standard procedure in \cite{peherstorfer2016data}.  On the other hand, when the approximate state $\tilde{~x}$ is centered it is not immediately obvious how to nonintrusively access the term $~U^\intercal~M~x_0$ necessary for ROM simulation, which involves a product between the unknown matrix $~M$ and the initial condition $~x_0$.  While it is possible to incorporate this product directly in the governing model form through a learnable constant term, e.g., \cite{mcquarrie2021data} and \cite{Farcas:2023}, this may not yield the desired energy level preservation in the resulting ROM due to the presence of nontrivial optimization error.
However, notice that the problematic term $~U^\intercal~M~x_0 = ~U^\intercal\dot{~x}|_{t=0} \eqqcolon ~U^\intercal~v_0$ can be written in terms of the reduced initial velocity vector $\hat{~v}_0 = ~U^\intercal~v_0$, which is available at least in approximate form. Therefore, as long as the time derivative of the snapshots can be computed or accurately estimated, which is necessary to carry out OpInf in the first place, it is feasible to carry out a centered inference for the reduced operator
\[\bar{~M}= \argmin_{\hat{~M}}\,\nn{~U^\intercal\lr{D_t\lr{~X}-~V_0} - \hat{~M}~U^\intercal\lr{~X-~X_0}}_F^2,\]
where
$~V_0$ is a matrix each column of which is $~v_0$.  
This leads to the centered OpInf POD ROM,
\[\dot{\hat{~x}} = \hat{~v}_0 + \bar{~M}\hat{~x}, \quad \hat{~x}_0 = ~0, \]
which can be simulated without intrusion into the FOM code.

\begin{remark}
While expressing the inaccessible part of the centered ROM in terms of snapshots of the velocity field $\dot{~x}$ is a useful trick which is suitable for the present purposes, it is important to note that this strategy becomes more complicated for nonlinear FOM models or ROMs which utilize nonlinear reduced bases.  
For example, the simplest quadratic model $\dot{~x}=~x^2$ leads to a POD ROM requiring the term $\lr{~x_0+~U\hat{~x}}^2 = ~x_0^2 + 2\,\mathrm{Diag}\lr{~x_0}~U\hat{~x} + \mathrm{diag}_{1,3}\lr{~U\otimes~U}\lr{~x\otimes ~x}$, where $\mathrm{Diag}(~x_0)$ is a diagonal matrix with entries $~x_0$ and the order-3 tensor $\mathrm{diag}_{1,3}\lr{~U\otimes~U}^i_{jk} = u^i_ju^i_k$.  Applying the same trick in this case yields $~v_0 = ~x_0^2$, which does not recover the entire contribution of $~x_0$.  On the other hand, given $~v_0$ as before, standard OpInf \cite{mcquarrie2021data} can be carried out for the inaccessible operators $2\,\mathrm{Diag}\lr{~x_0}~U$ and $\mathrm{diag}_{1,3}\lr{~U\otimes~U}$ simultaneously, perhaps with some optimization error incurred. It is straightforward to check that this general recipe will work for all polynomial nonlinearities, and extending centering to nonintrusive ROMs with more complex nonlinearities is an interesting avenue for future work.
\end{remark}

\subsection{Variationally consistent Hamiltonian OpInf}

It is now pertinent to discuss how OpInf can be effectively applied to the variationally consistent ROM \eqref{eq:newROM}, with or without a centered state approximation.  Recalling the splitting $H(~x) = \frac{1}{2}~x^\intercal~A~x + f(~x)$ of the Hamiltonian from Section~\ref{sbsec:opinf-old}, the standard OpInf problem associated to the uncentered ROM \eqref{eq:newROM} is given as
\[\bar{~A} = \argmin_{\hat{~A}\in\mathbb{R}^{n\times n}}\,\nn{\lr{~J~U}^\intercal D_t\lr{~X} - \hat{~A}\hat{~X}-\hat{\nabla}f\lr{~X}}_F^2 \quad\mathrm{s.t.}\quad \hat{~A}^\intercal=\hat{~A}.\]
This problem has a unique minimum under the same rank constraints as in Section~\ref{sbsec:opinf-old} while requiring less matrix multiplications with the reduced-order Poisson matrix $\hat{~J} = ~U^\intercal~J~U$, which is an advantage of the Petrov-Galerkin formulation. On the other hand, it is clear from Section~\ref{sbsec:reproj} that the learned operator $\bar{~A}$ will not generally match the intrusive ROM operator $\hat{~A}$, due to closure error introduced by the velocity approximation $D_t\lr{~X}$ and the reduced nonlinearity approximation $\hat{\nabla}f\lr{~X}$.  When snapshots of the projected state $~P_U~x$ can be collected, this error can be eliminated by carrying out the re-projected inference,
\begin{align}\label{eq:reprojnewop}
    \bar{~A} = \argmin_{\hat{~A}\in\mathbb{R}^{n\times n}}\,\nn{\lr{~J~U}^\intercal D_t\lr{~P_U~X} - \hat{~A}\hat{~X}-\hat{\nabla}f\lr{~P_U~X}}_F^2 \quad\mathrm{s.t.}\quad \hat{~A}^\intercal=\hat{~A},
\end{align}
where the approximate re-projected velocity field $D_t\lr{~P_U~X}\approx \dot{~\varphi}_0\lr{~P_U~X}$ is now used.  Letting $~X_t=D_t\lr{~P_U~X}$ denote the re-projected velocity field and $\hat{~F} = \hat{\nabla}f\lr{~P_U~X}$ denote the reduced, re-projected gradient of the nonlinear term (warning: these are different from the $~X_t,\hat{~F}$ defined before), it can be shown using \cite[Theorem 3.1]{gruber2023canonical} that the minimization \eqref{eq:reprojnewop} is equivalent to the linear system
\begin{align}\label{eq:linsys-newop}
    \lr{~I\baroplus\hat{~X}\hat{~X}^\intercal}\vect\bar{~A} = \vect\lr{~U^\intercal~J^\intercal~X_t\hat{~X}^\intercal + \hat{~X}~X_t^\intercal~J~U - \hat{~F}\hat{~X}^\intercal - \hat{~X}\hat{~F}^\intercal}.
\end{align}
Importantly, this inference yields a learned operator $\bar{~A}\approx\hat{~A}$ which matches the intrusive ROM operator $\hat{~A}=~U^\intercal~A~U$ up to numerical error from the linear solve.  Of course, once $\bar{~A}$ has been determined from the above, a non-intrusive version of the variationally consistent ROM \eqref{eq:newROM} can be simulated as
\begin{align}\label{eq:opinfROM-uncentered}
    \hat{~J}^\intercal\dot{\hat{~x}} = \bar{~A}\hat{~x} + \nabla\hat{f}\lr{\hat{~x}}, \quad \hat{~x}_0 = ~U^\intercal~x_0.
\end{align}


On the other hand, it is often desirable to include a shift $\tilde{~x} = ~x_0 + ~U\hat{~x}$ in the ROM state approximation for the reasons described in Section~\ref{sbsec:int-centering}, corresponding to the centered Hamiltonian ROM analogous to \eqref{eq:newROM}:
\begin{align}\label{eq:newROM-centered}
    \hat{~J}^\intercal\dot{\hat{~x}} = ~U^\intercal\nabla H\lr{~x_0+~U\hat{~x}} = \nabla\hat{H}\lr{\hat{~x}}, \quad \hat{~x}_0 = ~0.
\end{align}
When $\nabla H = ~A$ is linear, such as in the solid mechanical examples of Section~\ref{sec:numerics}, or when the nonlinear part $\nabla f$ can be accessed intrusively, this centering carries over to OpInf.  Applying the novel procedure described in Section~\ref{sbsec:nonint-centering}, it follows from \eqref{eq:newROM-centered} that 
\begin{align*}
    \hat{~J}^\intercal\dot{\hat{~x}} &= ~U^\intercal~A\lr{~x_0+~U\hat{~x}} + ~U^\intercal\nabla f\lr{~x_0+~U\hat{~x}} \\
    &= \lr{~J~U}^\intercal\lr{~v_0-~J\nabla f\lr{~x_0}} + \hat{~A}\hat{~x} + ~U^\intercal\nabla f\lr{~x_0+~U\hat{~x}},
\end{align*}
where $~v_0 = \dot{~x}|_{t=0}$ denotes the FOM initial velocity.  Therefore, to simulate the centered nonintrusive ROM corresponding to \eqref{eq:newROM-centered},
\begin{equation}\label{eq:opinfROM-centered}
    \hat{~J}^\intercal\dot{\hat{~x}} = \lr{~J~U}^\intercal\lr{~v_0-~J\nabla f\lr{~x_0}} + \bar{~A}\hat{~x} + \nabla\hat{f}\lr{\hat{~x}}, \quad \hat{~x}_0=~0,
\end{equation}
it is enough to solve the re-projected OpInf problem 
\begin{align}\label{eq:reprojnewop-centered}
    \bar{~A} = \argmin_{\hat{~A}\in\mathbb{R}^{n\times n}}\,\nn{~C-\hat{~A}~U^\intercal\lr{~X-~X_0}-~U^\intercal\nabla f\lr{~X_0+~P_U\lr{~X-~X_0}}}_F \quad\mathrm{s.t.}\quad \hat{~A}^\intercal=\hat{~A},
\end{align}
where the matrix $~C = \lr{~J~U}^\intercal D_t\lr{~X_0+~P_U\lr{~X-~X_0}} - D_t\lr{~X_0} + ~J\nabla f\lr{~X_0}$ and each column of the matrix $~X_0$ is the initial condition $~x_0$.  There are a couple of things to point out here.  First, re-projection involves both the state $~x$ and the initial condition $~x_0$ in this case, since if $\mathrm{im}\,~V = \lr{\mathrm{im}\,~U}^\perp$ then $~x = ~x_0 + ~P_U\lr{~x-~x_0} + ~P_V\lr{~x-~x_0}$ is the appropriate decomposition in terms of the subspaces $\mathrm{span}\,~x_0,\mathrm{im}\,~U$ and $\mathrm{im}\,~V$.  Secondly, when the approximate time derivative operator $D_t$ is linear, such as in the case that $D_t = \dot{~\varphi}_t = ~J~A$ is exact, the term $\lr{~J~U}^\intercal D_t\lr{~X_0+~P_U\lr{~X-~X_0}}-D_t\lr{~X_0} = \lr{~J~U}^\intercal D_t\lr{~P_U\lr{~X-~X_0}}$ conveniently simplifies.  In this case, letting $~X_t = D_t\lr{~P_U\lr{~X-~X_0}} +~J\nabla f\lr{~X_0}$ and $\hat{~X}=~U^\intercal\lr{~X-~X_0}$, a reduced operator $\bar{~A}\approx\hat{~A}$ which approximates the intrusive operator up to numerical error is given by the solution to \eqref{eq:linsys-newop} with $\hat{~F}=~0$.

This gives all information necessary for carrying out analysis and experiments using the proposed Hamiltonian ROM in both intrusive and nonintrusive settings.  For ease of use, the information presented here is summarized in Algorithm~\ref{alg:newOpInf}.  Note in particular that all proposed variationally consistent Hamiltonian OpInf ROMs are efficient online, incurring exactly the same online cost as OpInf ROMs which are structure-agnostic.


\begin{algorithm}[htb]
\caption{Variationally Consistent Hamiltonian Operator Inference (VC-H-OpInf)}\label{alg:newOpInf}
\begin{algorithmic}[1]
\Require Snapshots $~X^{N\times \lr{n_s+1}}$ of FOM solution corresponding to times $\mathcal{T}=\{t_0, ..., t_{n_s}\}$; intrusive access to nonlinear term $\nabla f:\mathbb{R}^N\to\mathbb{R}^N$ in Hamiltonian gradient $\nabla H(~x) = ~A~x + \nabla f(~x)$; reduced dimension $n>0$.
\Ensure Reduced operator $\bar{~A}\in\mathbb{R}^{n\times n}$ satisfying $\bar{~A}^\intercal=\bar{~A}$ which approximates the linear term $\hat{~A}=~U^\intercal~A~U$ in the reduced Hamiltonian $\hat{H}\lr{\hat{~x}} = \frac{1}{2}\hat{~x}^\intercal\hat{~A}\hat{~x} + \hat{f}\lr{\hat{~x}}$.
\State Build a reduced basis $~U\in V_n\lr{\mathbb{R}^N}$ from snapshot data $~X$.
\If{centering is employed}
    \If{re-projection is employed}
        \State \multiline{%
            Collect re-projected snapshots $~x_0 = ~x_0$, $~x_k = ~P_U~\varphi_{\Delta t}\lr{~x_0 + ~P_U\lr{~x_{k-1}-~x_0}}$ column-wise in matrix $~X$.  \\\Comment{Skip this if $\dot{~\varphi}_0$ is available.}}
        \State Set reduced state snapshots $\hat{~X}=~U^\intercal\lr{~X-~X_0}$.
        \State Collect re-projected velocity snapshots $D_t\lr{~X_0+~P_U\lr{~X-~X_0}} \approx \dot{~\varphi}_0\lr{~X_0+~P_U\lr{~X-~X_0}}$.
        \State Set derivative matrix $~X_t = D_t\lr{~X_0+~P_U\lr{~X-~X_0}} - D_t\lr{~X_0} + ~J\nabla f\lr{~X_0}$.
        \State Collect re-projected reduced nonlinearity snapshots $\hat{~F} = ~U^\intercal\nabla f\lr{~X_0+~U\lr{~X-~X_0}}$. 
    \Else
        \State Set reduced state snapshots $\hat{~X}=~U^\intercal\lr{~X-~X_0}$.
        \State Collect velocity snapshots $D_t\lr{~X} \approx \dot{~\varphi}_0\lr{~X}$.  
        \State Set derivative matrix $~X_t = D_t\lr{~X}-D_t\lr{~X_0}+~J\nabla f\lr{~X_0}$.
        \State Collect reduced nonlinearity snapshots $\hat{~F} = ~U^\intercal\nabla f\lr{~X}$.
    \EndIf
    \State Solve \eqref{eq:linsys-newop} for the reduced operator $\bar{~A}$ used in the nonintrusive centered ROM \eqref{eq:opinfROM-centered}.
\Else
    \If{re-projection is employed}
        \State \multiline{Collect re-projected snapshots $~x_0 = ~P_U~x_0$, $~x_k = ~P_U~\varphi_{\Delta t}\lr{~P_U~x_{k-1}}$ column-wise in matrix $~X$.  \\\Comment{Skip this if $\dot{~\varphi}_0$ is available.}}
        \State Set reduced state snapshots $\hat{~X} = ~U^\intercal~X$.
        \State Collect re-projected velocity snapshots $~X_t = D_t\lr{~P_U~X} \approx \dot{~\varphi_0}\lr{~P_U~X}$.
        \State Collect re-projected reduced nonlinearity snapshots $\hat{~F} = ~U^\intercal\nabla f\lr{~P_U~X}$.
    \Else
        \State Set reduced state snapshots $\hat{~X} = ~U^\intercal~X$.
        \State Collect velocity snapshots $~X_t = D_t\lr{~X} \approx \dot{~\varphi}_0\lr{~X}$.
        \State Collect reduced nonlinearity snapshots $\hat{~F} = ~U^\intercal\nabla f\lr{~X}$.
    \EndIf
    \State Solve \eqref{eq:linsys-newop} for the reduced operator $\bar{~A}$ used in the nonintrusive uncentered ROM \eqref{eq:opinfROM-uncentered}.
\EndIf
\end{algorithmic}
\end{algorithm}


\section{Analysis} \label{sec:analysis}


The proposed variationally consistent Hamiltonian ROM has now been established in four forms: uncentered and intrusive \eqref{eq:newROM}, centered and intrusive \eqref{eq:newROM-centered}, uncentered and nonintrusive \eqref{eq:opinfROM-uncentered}, and centered and nonintrusive \eqref{eq:opinfROM-centered}.  It will now be shown that state error is bounded in each of these cases by easily interpretable quantities coming from the reduced basis projection and the deviation of these ROMs from being canonically Hamiltonian.
To that end, denote the $L^2$ norm of the solution $~x(t)$ on the interval $[0,T]\subset\mathbb{R}$ by 
\[\left\|~x\right\|^2 = \int_0^T \nn{~x(t)}^2\,dt,\]
and denote projection onto the orthogonal complement of the span of $~U$ by $~P_U^\perp = ~I-~U~U^\intercal$.
To develop an error estimate for the intrusive ROMs \eqref{eq:newROM} and \eqref{eq:newROM-centered}, first recall the following classical result which bounds projection error in terms of the eigenvalues of the covariance matrix.

\begin{proposition}\label{prop:POD}
    Let $~U\in\mathbb{R}^{N\times n}$ be the rank-$n$ POD basis generated through snapshots of $~x(t)$ on the interval $[0,T]\subset\mathbb{R}$.  Then, the POD projection error satisfies
    \[\left\|~P_U^\perp~x\right\|^2 = \left\|~x-~P_U~x\right\|^2 = \sum_{i>n} \lambda_i,\]
    where $\lambda_i$ are the eigenvalues of the snapshot covariance matrix $~C\lr{~x} = \int_0^T ~x(t)~x(t)^\intercal dt$.  In particular, if $~U$ is instead built from centered snapshots $~w(t)=~x(t)-\bar{~x}$ then the projection error satisfies
    \[\left\|~P_U^\perp~w\right\|^2 = \left\|~P_U^\perp\lr{~x-\bar{~x}}\right\|^2 = \sum_{i>n} \lambda_i,\]
    where $\lambda_i$ are now the eigenvalues of the centered covariance matrix $~C(~x) = \int_0^T \lr{~x-\bar{~x}}\lr{~x-\bar{~x}}^\intercal dt$.
\end{proposition}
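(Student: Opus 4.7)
The proof will proceed by unpacking the spectral characterization of the POD basis and carrying out the resulting trace calculation directly. Since $~C(~x) = \int_0^T ~x(t)~x(t)^\intercal\,dt$ is real, symmetric, and positive semi-definite, it admits an orthonormal eigenbasis $\{~u_i\}_{i=1}^N$ of $\mathbb{R}^N$ with eigenvalues $\lambda_1 \geq \lambda_2 \geq \ldots \geq \lambda_N \geq 0$, and the rank-$n$ POD basis is defined precisely as $~U = \begin{pmatrix}~u_1 & \ldots & ~u_n\end{pmatrix}$, i.e., the eigenvectors corresponding to the $n$ largest eigenvalues. The plan is then to expand $~x(t)$ in the full eigenbasis, recognize the resulting pointwise residual as the tail sum of squared coefficients, and integrate in time to recover eigenvalues via the definition of $~C(~x)$.

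More concretely, the first step is to write $~P_U^\perp = ~I - ~U~U^\intercal = \sum_{i>n} ~u_i ~u_i^\intercal$, which follows from the completeness of $\{~u_i\}$. The second step is the pointwise identity
\[
\nn{~P_U^\perp~x(t)}^2 = \IP{~P_U^\perp~x(t)}{~P_U^\perp~x(t)} = \sum_{i>n} \lr{~u_i^\intercal ~x(t)}^2,
\]
using orthonormality of the $~u_i$. The third step is to integrate over $[0,T]$ and swap the (finite) sum with the integral:
\[
\left\|~P_U^\perp~x\right\|^2 = \sum_{i>n} \int_0^T ~u_i^\intercal ~x(t)~x(t)^\intercal ~u_i\,dt = \sum_{i>n} ~u_i^\intercal ~C(~x)~u_i = \sum_{i>n} \lambda_i,
\]
where the last equality uses $~C(~x)~u_i = \lambda_i ~u_i$.

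For the centered case, the only modification is that the POD basis is now defined as the leading eigenvectors of $~C(~x) = \int_0^T \lr{~x-\bar{~x}}\lr{~x-\bar{~x}}^\intercal dt$, with the same eigenvalue decomposition applied to the centered snapshot field $~w(t) = ~x(t)-\bar{~x}$. Repeating the three steps above with $~w$ in place of $~x$ yields the stated equality verbatim.

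There is no real obstacle here; the argument is a direct spectral calculation. The only subtlety worth flagging is ensuring that the POD basis is indeed characterized as the top-$n$ eigenvectors of the covariance operator (as opposed to, e.g., the left singular vectors of a discrete snapshot matrix), so that $~u_i^\intercal ~C(~x)~u_i = \lambda_i$ holds. This identification is standard and follows from the equivalence between the continuous POD optimization problem and the eigenvalue problem for $~C(~x)$.
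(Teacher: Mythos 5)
Your proof is correct. The paper itself offers no proof of Proposition~\ref{prop:POD} --- it is recalled as a classical result --- so there is nothing to compare against; your spectral argument (expand $~P_U^\perp$ in the eigenbasis of the covariance operator, compute the pointwise residual, integrate, and read off the tail eigenvalues via $~u_i^\intercal ~C(~x)~u_i = \lambda_i$) is the standard and complete derivation. Your closing caveat is also well placed: the proposition is stated for the continuous-time covariance operator while the paper constructs $~U$ by truncated SVD of a discrete snapshot matrix, so the identification of the POD basis with the top-$n$ eigenvectors of $~C(~x)$ (equivalently, left singular vectors, with $\lambda_i = \sigma_i^2$) is exactly the hypothesis under which the stated equality holds.
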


Recall additionally that a function $~f:\mathbb{R}^n\to\mathbb{R}^p$ is Lipschitz continuous provided there exists a finite constant $L_{~f}>0$ such that for any $~x,~y\in\mathbb{R}^n$,
\[\nn{~f(~x)-~f(~y)} \leq L_{~f}\nn{~x-~y},\]
where the norms are Euclidean and defined in the relevant spaces.  Letting $\lambda_{\mathrm{max}}(\cdot)$ denote selection of the maximum eigenvalue, the following error estimate is now proven for the variationally consistent Hamiltonian ROM in the intrusive setting.

\begin{theorem}\label{thm:newrom}
    Let $~x(t)$ denote the solution to the FOM \eqref{eq:fom} on the interval $[0,T]$
    and let $\hat{~x}(t)$ denote the solution to the variationally consistent Hamiltonian ROM \eqref{eq:newROM-centered}.  If the Hamiltonian gradient $\nabla H$ is Lipschitz continuous on the $~x$-image of $[0,T]$, then the approximation error in the reproductive ROM satisfies
    \[\left\|~x - \lr{~x_0 + ~U\hat{~x}}\right\| \leq c\sqrt{\sum_{i>n} \lambda_i}
    + \bar{c}\,\left\|\nabla H(~x)\right\|\sqrt{\lambda_{\mathrm{max}}\lr{\hat{~J}^{-\intercal}\hat{~J}^{-1}-~I}}, \]
    for constants $c,\bar{c}$ depending on $T, ~U, ~J, \nabla H$ and where $\lambda_i$ are the eigenvalues of the centered covariance matrix.  In particular, the ROM error is bounded and tends to zero as the reduced dimension $n$ tends to the full dimension $N$.
\end{theorem}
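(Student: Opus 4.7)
The plan is to decompose the total ROM error $e(t) = x(t) - x_0 - U\hat{x}(t)$ into a basis-projection piece and a coefficient-error piece, bound each separately, and close the argument on the coefficient error by a Gronwall estimate on the residual ODE. Setting $\eta(t) := U^\intercal(x(t) - x_0) - \hat{x}(t)$, the decomposition $e(t) = P_U^\perp(x(t) - x_0) + U\eta(t)$ together with the triangle inequality and the orthonormality of $U$ yields $\|e\| \leq \|P_U^\perp(x - x_0)\| + \|\eta\|$ in the $L^2(0,T)$ norm. Proposition~\ref{prop:POD} applied to the centered snapshots bounds the first summand directly by $\sqrt{\sum_{i>n}\lambda_i}$.

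Next I would derive an ODE for the coefficient error. Subtracting the centered ROM \eqref{eq:newROM-centered}, in the form $\dot{\hat{x}} = \hat{J}^{-\intercal} U^\intercal \nabla H(x_0 + U\hat{x})$, from $U^\intercal \dot{x} = U^\intercal J \nabla H(x)$ and inserting $\pm \hat{J}^{-\intercal} U^\intercal \nabla H(x)$ gives
\begin{equation*}
\dot{\eta}(t) = \bigl(U^\intercal J - \hat{J}^{-\intercal} U^\intercal\bigr)\nabla H(x) + \hat{J}^{-\intercal} U^\intercal\bigl[\nabla H(x) - \nabla H(x_0 + U\hat{x})\bigr].
\end{equation*}
The second summand is controlled pointwise by Lipschitz continuity of $\nabla H$, giving a bound proportional to $|e(t)|$, which further splits into a POD contribution proportional to $|P_U^\perp(x - x_0)|$ and a self-referential term proportional to $|\eta(t)|$. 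The first summand is the \emph{canonicity defect}: it vanishes precisely when $U^\intercal J = \hat{J}^{-\intercal} U^\intercal$ (equivalently, when $U$ is $J$-equivariant) and otherwise supplies the deviation-from-canonicity contribution.

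The pivotal computation is to identify the spectral norm of the canonicity defect with the stated quantity. Using $J J^\intercal = I$, $\hat{J} = U^\intercal J U$, and $U^\intercal U = I$, direct multiplication gives
\begin{equation*}
\bigl(U^\intercal J - \hat{J}^{-\intercal} U^\intercal\bigr)\bigl(U^\intercal J - \hat{J}^{-\intercal} U^\intercal\bigr)^\intercal = I - \hat{J}\hat{J}^{-1} - \hat{J}^{-\intercal}\hat{J}^\intercal + \hat{J}^{-\intercal}\hat{J}^{-1} = \hat{J}^{-\intercal}\hat{J}^{-1} - I,
\end{equation*}
so the spectral norm of the defect is exactly $\sqrt{\lambda_{\max}(\hat{J}^{-\intercal}\hat{J}^{-1} - I)}$. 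Pointwise, the first source term is therefore bounded by $\sqrt{\lambda_{\max}(\hat{J}^{-\intercal}\hat{J}^{-1} - I)}\,|\nabla H(x(t))|$.

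To finish, I would apply Gronwall's inequality in integral form to the ODE for $\eta$ (with $\eta(0) = 0$), absorbing the $|\eta(t)|$ contribution into an exponential prefactor, then square, integrate in $t$, and apply Cauchy-Schwarz to convert the pointwise estimate into the $L^2(0,T)$ bound on $\|\eta\|$. This isolates the eigenvalue sum and the spectral quantity into the two summands of the stated estimate, with constants $c, \bar{c}$ absorbing $T$, $L_{\nabla H}$, and the spectral norm of $\hat{J}^{-\intercal}$. The $n \to N$ limit follows because $\sum_{i>n}\lambda_i \to 0$ while $UU^\intercal \to I$ forces $\hat{J}\hat{J}^\intercal \to JJ^\intercal = I$, so the spectral quantity also tends to zero. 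The main technical obstacle I anticipate is the bookkeeping required to keep the two error sources cleanly separated through the combined Gronwall-plus-$L^2$-integration step, rather than letting them coalesce into a single exponential prefactor that would blur the interpretable decomposition the theorem advertises.
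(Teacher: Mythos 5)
Your proposal is correct and follows essentially the same route as the paper's proof: the same decomposition into a POD projection term plus a coefficient error, the same key identity $\bigl(U^\intercal J - \hat{J}^{-\intercal}U^\intercal\bigr)\bigl(U^\intercal J - \hat{J}^{-\intercal}U^\intercal\bigr)^\intercal = \hat{J}^{-\intercal}\hat{J}^{-1} - I$ for the canonicity defect, and the same Gronwall-plus-$L^2$ closure. The only cosmetic differences are that you work with the coefficient error $\eta = U^\intercal(x - x_0) - \hat{x}$ rather than its lift $U\eta$, treat the centered case directly instead of reducing it to the uncentered one, and keep the Lipschitz remainder as a single term where the paper splits it through $P_U x$.
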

\begin{proof}
    First, suppose $~x_0=~0$, so that the ROM approximation is uncentered. Adding and subtracting the projection $~P_U~x$, it follows that
    \[~x - ~U\hat{~x} = \lr{~x-~P_U~x} + \lr{~P_U~x - ~U\hat{~x}} \eqqcolon ~P_U^\perp~x + ~y,\]
    so that the integrand of the ROM error decomposes into two distinct terms.  Moreover, by adding and subtracting additional terms it follows that the time derivative of $~y$ can be expressed as
    \begin{align*}
        \dot{~y} &= ~P_U\dot{~x} - ~U\dot{\hat{~x}} = ~P_U~J\nabla H(~x) - ~U\hat{~J}^{-\intercal}~U^\intercal\nabla H\lr{~U\hat{~x}} \\
        &= ~U\lr{~U^\intercal~J - \hat{~J}^{-\intercal}~U^\intercal}\nabla H(~x) + ~U\hat{~J}^{-\intercal}~U^\intercal\lr{\nabla H(~x)-\nabla H\lr{~P_U~x}} \\
        &\quad+ ~U\hat{~J}^{-\intercal}~U^\intercal\lr{\nabla H\lr{~P_U~x}-\nabla H\lr{~U\hat{~x}}}.
    \end{align*}
    Noting that the spectral norm $\nn{~A}^2$ = $\lambda_{\mathrm{max}}\lr{~A^\intercal~A} = \lambda_{\mathrm{max}}\lr{~A~A^\intercal}$ is invariant under unitary transformations then yields the norm bound $\nn{~U\hat{~J}^{-\intercal}~U^\intercal} = \nn{\hat{~J}^{-1}}$ along with the calculation
    \begin{align*}
        \nn{~U\lr{~U^\intercal~J - \hat{~J}^{-\intercal}~U^\intercal}}^2 &= \nn{~U^\intercal~J - \hat{~J}^{-\intercal}~U^\intercal}^2 \\
        &= \lambda_{\mathrm{max}}\lr{~U^\intercal~J~J^\intercal~U-\hat{~J}^{-\intercal}\hat{~J}^\intercal-\hat{~J}\hat{~J}^{-1}+\hat{~J}^{-\intercal}\hat{~J}^{-1}} \\
        &= \lambda_{\mathrm{max}}\lr{\hat{~J}^{-\intercal}\hat{~J}^{-1}-~I}.
    \end{align*} 
    Since the gradient $\nabla H$ is assumed Lipschitz continuous on the $~x$-image of the compact set $[0,T]$, hence bounded by assumption, this implies that the norm of $\dot{~y}$ can be estimated as 
    \begin{align*}
        \nn{\dot{~y}} &\leq \nn{~U\lr{~U^\intercal~J - \hat{~J}^{-\intercal}~U^\intercal}}\nn{\nabla H\lr{~x}} + \nn{~U\hat{~J}^{-\intercal}~U^\intercal}L_{\nabla H}\lr{\nn{~P_U^\perp~x} + \nn{~y}} \\
        &= \sqrt{\lambda_{\mathrm{max}}\lr{\hat{~J}^{-\intercal}\hat{~J}^{-1}-~I}}\,\nn{\nabla H(~x)} + \nn{\hat{~J}^{-1}}L_{\nabla H}\lr{\nn{~P_U^\perp~x} + \nn{~y}}.
    \end{align*}
    Now, using that the time derivative of the norm $\partial_t\nn{~y} = \lr{2\nn{~y}}^{-1}\IP{\dot{~y}}{~y}\leq\nn{\dot{~y}}$ is less than the norm of the time derivative, and defining the constant $c_2 = \nn{\hat{~J}^{-1}}L_{\nabla H}$ leads to a first-order differential equation for $\nn{~y}$,
    \begin{align*}
        \partial_t\nn{~y} - c_2\nn{~y} \leq \nn{\nabla H\lr{~x}}\sqrt{\lambda_{\mathrm{max}}\lr{\hat{~J}^{-\intercal}\hat{~J}^{-1}-~I}} + c_2\nn{~P_U^\perp~x}.
    \end{align*}
    Therefore, multiplying by the integrating factor $e^{-c_2t}$ and applying Gronwall's inequality along with $~y(0)=~0$ yields
    \begin{align*}
        \nn{~y} &\leq \int_0^t e^{c_2\lr{t-s}}\lr{\nn{\nabla H\lr{~x}}\sqrt{\lambda_{\mathrm{max}}\lr{\hat{~J}^{-\intercal}\hat{~J}^{-1}-~I}} + c_2\nn{~P_U^\perp~x}}\, ds \\
        &\leq \int_0^T e^{c_2\lr{T-s}}\lr{\nn{\nabla H\lr{~x}}\sqrt{\lambda_{\mathrm{max}}\lr{\hat{~J}^{-\intercal}\hat{~J}^{-1}-~I}} + c_2\nn{~P_U^\perp~x}}\, ds \\
        &\leq C\, \left\| \nn{\nabla H\lr{~x}}\sqrt{\lambda_{\mathrm{max}}\lr{\hat{~J}^{-\intercal}\hat{~J}^{-1}-~I}} + c_2\nn{~P_U^\perp~x}\right\|, 
    \end{align*}
    where $C = \left\|e^{2c_2\lr{T-t}}\right\| = \sqrt{\lr{e^{2c_2T}-1}/2c_2}$ and the last step applies the Cauchy-Schwarz inequality.  It follows from this and the Minkowski inequality that  
    \begin{align*}
        \left\|~y\right\| &\leq C\sqrt{T}\,\left\|\nn{\nabla H(~x)}\sqrt{\lambda_{\mathrm{max}}\lr{\hat{~J}^{-\intercal}\hat{~J}^{-1}-~I}} + c_2\nn{~P_U^\perp~x}\right\| \\
        &\leq C\sqrt{T}\,\left\|\nabla H(~x)\sqrt{\lambda_{\mathrm{max}}\lr{\hat{~J}^{-\intercal}\hat{~J}^{-1}-~I}}\right\| + Cc_2\sqrt{T}\,\left\|~P_U^\perp~x\right\|,
    \end{align*}
    and therefore the state error is bounded by
    \begin{align*}
        \left\|~x-~U\hat{~x}\right\| \leq \left\|~P_U^\perp~x\right\| + \left\|~y\right\| \leq \lr{1+Cc_2\sqrt{T}}\left\|~P_U^\perp~x\right\| + C\sqrt{T}\,\left\|\nabla H(~x)\right\|\sqrt{\lambda_{\mathrm{max}}\lr{\hat{~J}^{-\intercal}\hat{~J}^{-1}-~I}},
    \end{align*}
    yielding the conclusion when $~x_0=~0$.  Now, given the ROM approximation $\tilde{~x} = ~x_0+~U\hat{~x}$ with nonzero centering $~x_0\neq~0$, then $~w = ~x-~x_0$ satisfies $\dot{~w} = \dot{~x}$ and the ROM error integrand becomes
\[~x-\lr{~x_0+~U\hat{~x}} = \lr{~w - ~P_U~w} + \lr{~P_U~w - ~U\hat{~x}} \eqqcolon ~P_U^\perp~w + ~z. \]
Since the projection error $\left\|~P_U^\perp~w\right\|$ can be bounded by Proposition~\ref{prop:POD}, $~w_0=~0$, and $\dot{~w}=\dot{~x}$, the proof of this case becomes identical to the proven case $~x_0=~0$ with no centering. 
\end{proof}

Theorem~\ref{thm:newrom} shows that the error in the variationally consistent ROM approximation can be separated into two distinct terms  coming from the information lost during POD projection and the failure of the reduced symplectic matrix inverse $\hat{~J}^{-1}$ to square to minus the identity, respectively.  Notice that this second contribution is zero when using a symplectic lift or PSD basis, since the equivariance condition on the reduced symplectic matrix implies that $\hat{~J} = ~J_n$ and the resulting ROM is canonically Hamiltonian.  In this case, state error in the ROM depends only on the POD projection error as consistent with standard Galerkin theory, illustrating why symplectic lift bases tend to lead to the best performance when they are expressive enough to reliably capture FOM state dynamics.  Unfortunately, this is often not the case in realistic simulations, where the usual POD basis formed via the SVD of state snapshots outperforms known algorithms for symplectic lift bases by several orders of magnitude. In these instances, the balance of both terms in Theorem~\ref{thm:newrom} is important, and it can happen that only \eqref{eq:newROM} leads to usable results.  Some examples to this effect can be found in Section~\ref{sbsec:linelastic}.


\begin{remark}
    It is remarkable that every Hamiltonian ROM which does not rely on a specially designed basis introduces extra contributions to the ROM error in order to preserve Hamiltonian structure at the reduced level.  In the case of the least-squares ROM \eqref{eq:symROM}, this term is essentially the projection $~P_U^\perp\nabla H$ of the gradient into $\lr{\mathrm{im}\,~U}^\perp$ (c.f. \cite[Theorem 3.1]{gong2017structure}), while for the proposed variationally consistent ROM \eqref{eq:newROM} it is the failure of the reduced  symplectic matrix inverse $\hat{~J}^{-1}$ to be anti-involutive.  
\end{remark}

It is useful to verify that a similar error bound holds for the nonintrusive ROM \eqref{eq:reprojnewop-centered}. In fact, the same technique can be applied provided care is taken with  other sources of potential error.  In particular, these are error in the time derivative approximation $D_t\lr{~X}\approx \dot{~\varphi}_0\lr{~X}$ and optimization error which arises from solving the relevant OpInf problem.  For each state $~x$, these quantities are defined as
\begin{align*}
    \varepsilon_{\Delta t}(~x) = \nn{\dot{~x}-D_t\lr{~x}}, \qquad \varepsilon_A(~x) = \nn{\lr{~J~U}^\intercal D_t\lr{~x}-~U^\intercal\nabla\bar{H}\lr{~x}},
\end{align*}
where $\bar{H}(~x) = \frac{1}{2}~x^\intercal~U\bar{~A}~U^\intercal~x + f\lr{~x}$ denotes the approximate Hamiltonian defined by the OpInf solution $\bar{~A}\approx\hat{~A}$.  In particular, note that $\varepsilon_A\lr{~P_U~x}$ is the relevant notion of approximation error when re-projection is employed, and that this quantity vanishes when the time derivative approximation $D_t\approx \dot{~\varphi}_0$ is exact.  With these notions in place, there is the following analogue of Theorem~\ref{thm:newrom}.

\begin{theorem}\label{thm:newop}
    Let $~x(t)$ denote the solution to the FOM \eqref{eq:fom} on the interval $[0,T]$
    and let $\hat{~x}(t)$ denote the solution to the variationally consistent OpInf ROM \eqref{eq:opinfROM-centered}.  Under the same assumptions as Theorem~\ref{thm:newrom} and provided the time derivative approximation $D_t \approx \dot{~\varphi}_t$ is Lipschitz continuous and the operator error $\nn{~A-~U\bar{~A}~U^\intercal}$ is finite, the approximation error in the reproductive ROM satisfies 
    \[\left\|~x - \lr{~x_0 + ~U\hat{~x}}\right\| \leq c\sqrt{\sum_{i>n} \lambda_i}
    + \bar{c}\,\left\|D_t\lr{~x}\right\|\sqrt{\lambda_{\mathrm{max}}\lr{\hat{~J}^{-\intercal}\hat{~J}^{-1}-~I}} + \tilde{c}\,\lr{\left\|\varepsilon_{\Delta t}\lr{~x}\right\|+\left\|\varepsilon_A\lr{~P_U~x}\right\|}, \]
    for constants $c,\bar{c}, \tilde{c}$ depending only on $T, ~U, ~J, \nabla H$ and where $\lambda_i$ are the eigenvalues of the centered covariance matrix.  Moreover, when the time derivative operator $D_t = \dot{~\varphi}_t$ is exact, this reduces to precisely the bound in Theorem~\ref{thm:newrom}.
\end{theorem}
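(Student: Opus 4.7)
The plan is to mirror the proof of Theorem~\ref{thm:newrom} as closely as possible, inserting strategic add-subtracts so that the two genuinely new sources of error, namely $\varepsilon_{\Delta t}$ and $\varepsilon_A$, appear as identifiable terms in the eventual Gronwall argument. First, I would reduce to the case $\bb{x}_0=\bb{0}$ exactly as in Theorem~\ref{thm:newrom}, replacing $\bb{x}$ by $\bb{w}=\bb{x}-\bb{x}_0$ (so that $\dot{\bb{w}}=\dot{\bb{x}}$ and $\bb{w}_0=\bb{0}$), and decompose the error integrand as $\bb{x}-\bb{U}\hat{\bb{x}}=\bb{P}_U^\perp\bb{x}+\bb{y}$ with $\bb{y}=\bb{P}_U\bb{x}-\bb{U}\hat{\bb{x}}$. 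The projection term is controlled by Proposition~\ref{prop:POD}, so everything comes down to bounding $\|\bb{y}\|$.

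Next, I would compute $\dot{\bb{y}}=\bb{P}_U\dot{\bb{x}}-\bb{U}\hat{\bb{J}}^{-\intercal}\bb{U}^\intercal\nabla\bar H(\bb{U}\hat{\bb{x}})$ and insert the chain of intermediate terms
\begin{align*}
\dot{\bb{y}} = \bb{P}_U\bigl(\dot{\bb{x}}-D_t(\bb{x})\bigr) + \bb{P}_U\bigl(D_t(\bb{x})-D_t(\bb{P}_U\bb{x})\bigr) + \bb{P}_U D_t(\bb{P}_U\bb{x}) - \bb{U}\hat{\bb{J}}^{-\intercal}\bb{U}^\intercal\nabla\bar H(\bb{U}\hat{\bb{x}}).
\end{align*}
The first piece contributes $\|\varepsilon_{\Delta t}(\bb{x})\|$ directly, and the second is bounded by $L_{D_t}\|\bb{P}_U^\perp\bb{x}\|$ via Lipschitz continuity of $D_t$, hence absorbable into the POD term with an adjusted constant. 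The remaining piece is the interesting one, and I would split it further as
\begin{align*}
\bb{P}_U D_t(\bb{P}_U\bb{x})-\bb{U}\hat{\bb{J}}^{-\intercal}\bb{U}^\intercal\nabla\bar H(\bb{U}\hat{\bb{x}}) &= \bigl(\bb{U}\bb{U}^\intercal-\bb{U}\hat{\bb{J}}^{-\intercal}\bb{U}^\intercal\bb{J}^\intercal\bigr)D_t(\bb{P}_U\bb{x}) \\
&\quad + \bb{U}\hat{\bb{J}}^{-\intercal}\bigl(\bb{U}^\intercal\bb{J}^\intercal D_t(\bb{P}_U\bb{x})-\bb{U}^\intercal\nabla\bar H(\bb{P}_U\bb{x})\bigr) \\
&\quad + \bb{U}\hat{\bb{J}}^{-\intercal}\bb{U}^\intercal\bigl(\nabla\bar H(\bb{P}_U\bb{x})-\nabla\bar H(\bb{U}\hat{\bb{x}})\bigr).
\end{align*}
The middle term is exactly $\bb{U}\hat{\bb{J}}^{-\intercal}$ acting on the OpInf residual, whose norm is $\varepsilon_A(\bb{P}_U\bb{x})$; the last term is $\bb{U}\hat{\bb{J}}^{-\intercal}\bb{U}^\intercal$ acting on a difference bounded by $L_{\nabla\bar H}\|\bb{y}\|$, where $L_{\nabla\bar H}\leq L_{\nabla H}+\|\bb{A}-\bb{U}\bar{\bb{A}}\bb{U}^\intercal\|$ is finite by hypothesis.

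The key algebraic observation driving the canonicity-deviation term is that right multiplication by the orthogonal matrix $\bb{J}$ gives $(\bb{U}\bb{U}^\intercal-\bb{U}\hat{\bb{J}}^{-\intercal}\bb{U}^\intercal\bb{J}^\intercal)\bb{J}=\bb{U}\bb{U}^\intercal\bb{J}-\bb{U}\hat{\bb{J}}^{-\intercal}\bb{U}^\intercal$, which is the same matrix whose spectral norm was computed in Theorem~\ref{thm:newrom} to be $\sqrt{\lambda_{\max}(\hat{\bb{J}}^{-\intercal}\hat{\bb{J}}^{-1}-\bb{I})}$; since $\bb{J}$ is orthogonal, the new matrix inherits the identical spectral norm. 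Collecting the estimates yields a differential inequality of the form
\begin{align*}
\partial_t\|\bb{y}\| - c_2\|\bb{y}\| \leq \|D_t(\bb{P}_U\bb{x})\|\sqrt{\lambda_{\max}(\hat{\bb{J}}^{-\intercal}\hat{\bb{J}}^{-1}-\bb{I})} + c_3\|\bb{P}_U^\perp\bb{x}\| + \|\varepsilon_{\Delta t}(\bb{x})\| + \|\hat{\bb{J}}^{-1}\|\,\|\varepsilon_A(\bb{P}_U\bb{x})\|,
\end{align*}
after which I would use the integrating factor $e^{-c_2 t}$, Gronwall, Cauchy--Schwarz, and Minkowski as in the earlier proof. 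Finally, replacing $\|D_t(\bb{P}_U\bb{x})\|$ by $\|D_t(\bb{x})\|+L_{D_t}\|\bb{P}_U^\perp\bb{x}\|$ absorbs the difference into the POD term, producing the advertised three-term bound with constants $c,\bar c,\tilde c$ depending only on $T,\bb{U},\bb{J},\nabla H$. The centered case follows from the $\bb{x}_0=\bb{0}$ case by the same substitution trick used at the end of Theorem~\ref{thm:newrom}. The exact-derivative reduction ($D_t=\dot{\bb{\varphi}}_t$) is immediate: $\varepsilon_{\Delta t}\equiv 0$, the re-projected OpInf solve is exact so $\varepsilon_A\equiv 0$, and $\|D_t(\bb{x})\|=\|\nabla H(\bb{x})\|\cdot\|\bb{J}\|=\|\nabla H(\bb{x})\|$, collapsing the estimate to Theorem~\ref{thm:newrom}. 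The main obstacle I anticipate is bookkeeping: the simultaneous appearance of $D_t$ vs.\ $\dot{\bb{\varphi}}_t$ and $\bar H$ vs.\ $H$ requires care to ensure that the canonicity-deviation factor is still the \emph{same} quantity $\sqrt{\lambda_{\max}(\hat{\bb{J}}^{-\intercal}\hat{\bb{J}}^{-1}-\bb{I})}$ rather than a more complicated one, which is precisely what the orthogonality of $\bb{J}$ secures.
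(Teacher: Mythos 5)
Your proposal is correct and follows essentially the same route as the paper's proof: the same preliminary Lipschitz bound $L_{\nabla\bar H}\leq L_{\nabla H}+\nn{~A-~U\bar{~A}~U^\intercal}$, the same decomposition of $\dot{~y}$ isolating $\varepsilon_{\Delta t}$, the canonicity-deviation matrix, the $D_t$-Lipschitz term, the OpInf residual $\varepsilon_A\lr{~P_U~x}$, and the $\nabla\bar H$ increment, followed by the identical Gronwall argument and reductions. The only cosmetic difference is that you apply the canonicity matrix to $D_t\lr{~P_U~x}$ and convert back to $\left\|D_t\lr{~x}\right\|$ at the end via Lipschitz continuity, whereas the paper applies it to $D_t\lr{~x}$ directly and routes the difference $D_t\lr{~x}-D_t\lr{~P_U~x}$ through the $~U\hat{~J}^{-\intercal}~U^\intercal~J^\intercal$ factor; both yield the same spectral norm $\sqrt{\lambda_{\mathrm{max}}\lr{\hat{~J}^{-\intercal}\hat{~J}^{-1}-~I}}$ by the orthogonality of $~J$.
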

\begin{proof}
     First, note that Lipschitz continuity in the Hamiltonian gradient $\nabla H$ implies the same in the approximate Hamiltonian gradient $\nabla\bar{H}$, since by adding and subtracting $~A\lr{~x-~y}$ and applying the triangle inequality it follows that
     \begin{align*}
         \nn{\nabla\bar{H}\lr{~x}-\nabla\bar{H}\lr{~y}} &= \lr{~U\bar{~A}~U^\intercal\lr{~x-~y} + \nabla f(~x) - \nabla f(~y)} \\
         &\leq \lr{\nn{~A-~U\bar{~A}~U^\intercal} + L_{\nabla H}}\nn{~x-~y},
     \end{align*}
     showing that $\nabla\bar{H}$ is Lipschitz continuous with (finite) constant $L_{\nabla\bar{H}} = L_{\nabla H} + \nn{~A-~U\bar{~A}~U^\intercal}$.  Now, suppose $~x_0=~0$, so that the ROM approximation is uncentered. Adding and subtracting the projection $~P_U~x$ as before, it follows that
    \[~x - ~U\hat{~x} = \lr{~x-~P_U~x} + \lr{~P_U~x - ~U\hat{~x}} \eqqcolon ~P_U^\perp~x + ~y,\]
    so that the error decomposes into two distinct terms.  Adding and subtracting more terms, it follows that the time derivative of $~y$ can be expressed as
    \begin{align*}
        \dot{~y} &= ~P_U\dot{~x} - ~U\dot{\hat{~x}} = ~P_U\lr{\dot{~x}-D_t(~x)} + ~P_U\lr{~I-~U\hat{~J}^{-\intercal}~U^\intercal~J^\intercal}D_t(~x) + ~U\hat{~J}^{-\intercal}~U^\intercal~J^\intercal\lr{D_t(~x)-D_t\lr{~P_U~x}} \\
        &\quad\qquad\qquad\qquad+ ~U\hat{~J}^{-\intercal}\lr{\lr{~J~U}^\intercal D_t\lr{~P_U~x} - ~U^\intercal\nabla\bar{H}\lr{~P_U~x}} + ~U\hat{~J}^{-\intercal}~U^\intercal\lr{\nabla \bar{H}\lr{~P_U~x}-\nabla \bar{H}\lr{~U\hat{~x}}}.
    \end{align*}
    Using again the invariance of the spectral norm under unitary transformations along with the calculation,
    \begin{align*}
        \nn{~P_U-~U\hat{~J}^{-\intercal}~U^\intercal~J^\intercal} &= \nn{~U\lr{~U^\intercal~J-\hat{~J}^{-\intercal}~U^\intercal}~J^\intercal} \\
        &= \nn{~U^\intercal~J-\hat{~J}^{-\intercal}~U^\intercal} = \lambda_{\mathrm{max}}\lr{\hat{~J}^{-\intercal}\hat{~J}^{-1}-~I}^{1/2},
    \end{align*}
    leads to the following norm estimate of $\dot{~y}$:
    \begin{align*}
        \nn{\dot{~y}} &\leq \nn{\dot{~x}-D_t(~x)} + \lambda_{\mathrm{max}}\lr{\hat{~J}^{-\intercal}\hat{~J}^{-1}-~I}^{1/2} \nn{D_t(~x)} + \nn{\hat{~J}^{-1}}\nn{D_t(~x)-D_t\lr{~P_U~x}} \\
        &\qquad+ \nn{\hat{{~J}}^{-1}}\nn{\lr{~J~U}^\intercal D_t\lr{~P_U~x} - ~U^\intercal\nabla \bar{H}\lr{~P_U~x}} + \nn{\hat{~J}^{-1}}\nn{\nabla \bar{H}\lr{~P_U~x}-\nabla \bar{H}\lr{~U\hat{~x}}} \\
        &\leq \varepsilon_{\Delta t}(~x) + \lambda_{\mathrm{max}}\lr{\hat{~J}^{-\intercal}\hat{~J}^{-1}-~I}^{1/2} \nn{D_t(~x)} + \nn{\hat{~J}^{-1}} L_{D_t}\nn{~P_U^\perp~x} + \nn{\hat{~J}^{-1}}\varepsilon_A\lr{~P_U~x} + \nn{\hat{~J}^{-1}} L_{\nabla\bar{H}}\nn{~y}.
    \end{align*}
    Defining constants  $c_0=\nn{\hat{~J}^{-1}}$, $c_2=\nn{\hat{~J}^{-1}}L_{\nabla \bar{H}}$, and $c_3=\nn{\hat{~J}^{-1}}L_{D_t}$, and applying $\partial_t\nn{~y} \leq \nn{\dot{~y}}$ again leads to a first-order differential equation, 
    \begin{align*}
        \partial_t\nn{~y} - c_2\nn{~y}\leq \varepsilon_{\Delta t}(~x) + c_0\varepsilon_A\lr{~P_U~x} + \nn{D_t(~x)}\sqrt{\lambda_{\mathrm{max}}\lr{\hat{~J}^{-\intercal}\hat{~J}^{-1}-~I}} + c_3\nn{~P_U^\perp~x},
    \end{align*}
    and therefore $\partial_t\lr{e^{-c_2t}\nn{~y}}$ can be integrated in time and bounded in terms of the constant $C=\sqrt{\lr{e^{2c_2T}-1}/2c_2}$ to yield the pointwise-in-time bound
    \begin{align*}
        \nn{~y} &\leq C\lr{\left\|\varepsilon_{\Delta t}\lr{~x}\right\|+c_0\left\|\varepsilon_A\lr{~P_U~x}\right\|} + Cc_3\left\|~P_U^\perp~x\right\| + C\left\|D_t(~x)\right\|\sqrt{\lambda_{\mathrm{max}}\lr{\hat{~J}^{-\intercal}\hat{~J}^{-1}-~I}}.
    \end{align*}
    Defining the constant $c_1 = \mathrm{max}\{1, c_0\}$ and integrating again in time then yields the desired estimate
    \begin{align*}
        \left\|~x-~U\hat{~x}\right\| &\leq \left\|~P_U^\perp~x\right\| + \left\|~y\right\| \\
        &\leq \lr{1+Cc_3\sqrt{T}}\left\|~P_U^\perp~x\right\| + Cc_1\sqrt{T}\lr{\left\|\varepsilon_{\Delta t}\lr{~x}\right\|+\left\|\varepsilon_A\lr{~P_U~x}\right\|} + C\sqrt{T}\left\|D_t(~x)\right\|\sqrt{\lambda_{\mathrm{max}}\lr{\hat{~J}^{-\intercal}\hat{~J}^{-1}-~I}}.
    \end{align*}
    The general case $~x_0\neq~0$ now follows similarly as in the proof of Theorem~\ref{thm:newrom}.

    To finish the argument, note that the velocity approximation error $\varepsilon_{\Delta t}\equiv 0$ is identically zero when the derivative operator $D_t = \dot{~\varphi_t}$ is exact.  It follows that the reprojected OpInf approximation error $\varepsilon_A\equiv 0$ is identically zero as well, since this enables exact recovery of the intrusive operator $\bar{~A}=\hat{~A}$.  The fact that the $L^2$-norm of the FOM velocity can be expressed as $\left\|\dot{~x}\right\| = \left\|~J\nabla H\lr{~x}\right\| = \left\|\nabla H\lr{~x}\right\|$ then shows that the OpInf ROM recovers exactly the same error bound as given in Theorem~\ref{thm:newrom}.  
\end{proof}

\begin{remark}
    While the error bound in Theorem~\ref{thm:newop} does not assume re-projection is employed, it has been expressed in a way which is useful in this case.  A slightly different variant of Theorem~\ref{thm:newop} with the same key terms can be established in the case that ``vanilla'' OpInf is assumed, c.f. \cite[Theorem 1]{Geng:2024} for a proof in the case of the least-squares Hamiltonian ROM \eqref{eq:symROM}.  However, without re-projection, there can be troublesome optimization error which pollutes the learned solution $\bar{~A}$, leading to a pre-asymptotic plateau in accuracy as more basis modes are added (c.f. \cite[Figure 10]{Geng:2024}).
\end{remark}


\section{Numerical Examples}\label{sec:numerics}



It is now appropriate to study the numerical performance\footnote{Code for reproducing the results in this Section can be found at \url{https://github.com/sandialabs/HamiltonianOpInf}.} of the variationally consistent ROM proposed in Sections~\ref{sec:method} and \ref{sec:opinf}.  The primary metric that will be used for this purpose is relative $\ell^2$ error between true and approximate state snapshots $~X,\tilde{~X}\in\mathbb{R}^{N\times n_t}$, which provides a normalized indicator of ROM performance:
\[ R\ell_2\lr{~X,\tilde{~X}} = \frac{\nn{~X-\tilde{~X}}_F}{\nn{~X}_F}. \]
In addition to error in the system state, signed error in the Hamiltonian function(al) will be measured along FOM and ROM trajectories, providing a complementary metric which detects violation of the governing physics.  Note that this deviation is significantly influenced by the choice of time integrator; the implicit midpoint method (trapezoid rule), which is a symplectic integrator and hence energy-conserving, is used for all ROMs in order to remove this effect.  Moreover, to facilitate visual comparison, this error will be measured in each case relative to the Hamiltonian value at the start of the ROM trajectory, which agrees with the FOM value of the Hamiltonian when centering is applied (c.f. Section~\ref{sbsec:int-centering}).  Since it will be instructive to compare the properties of POD bases used in building each ROM, the snapshot energy will also be evaluated: given a rank $r$ snapshot matrix $~X$ with singular values $\{\sigma_i\}_{i=1}^r$ and POD basis size $n\leq r$, the snapshot energy is
\[E_s\lr{~X,n} = \frac{\sum_{k=1}^n \sigma_k}{\sum_{k=1}^r \sigma_k}.\]
This quantity indicates how much of the total variance in the snapshots $~X$ is captured by the reduced basis $~U\in\mathbb{R}^{N\times n}$.


To isolate the effect of enforcing variational consistency in Hamiltonian ROMs, every example below will be linear in the state $~x$.  In fact, this simplified scenario is already enough to produce use cases which are considerably difficult for ROMs (c.f. Section~\ref{sbsec:linelastic}), and it will be shown that the modifications proposed here may be necessary for acceptable performance in these cases.  To provide an informative comparison, several ROMs are used in each of the given examples.  ``Intrusive G-ROM'' refers to the most obvious Galerkin ROM \eqref{eq:galROM}, and ``OpInf G-ROM'' refers to the nonintrusive version of this based on operator inference.  ``Intrusive H-ROM (inconsistent)'' refers to \eqref{eq:symROM}, while ``Intrusive H-ROM (consistent)'' refers to the proposed variationally consistent ROM \eqref{eq:newROM}.  Operator inference H-ROMs corresponding to the variationally consistent case are also computed as in Algorithm~\ref{alg:newOpInf} and denoted by ``OpInf H-ROM (original)'' and ``OpInf H-ROM (reprojected)'' to denote the absence respectively presence of re-projection (c.f. Section~\ref{sbsec:reproj}).  In most cases, the centering procedures from Sections~\ref{sbsec:int-centering} and \ref{sbsec:nonint-centering} are applied to conserve the correct energy level along ROM trajectories, and this is denoted with ``MC'' in plot legends.  
To further probe similarities and differences, each considered ROM is applied using three different POD basis constructions: ``Ordinary POD'' denotes the basis $~U$ formed using a simple SVD on snapshots $~X\approx~U~\Sigma~V^\intercal$; ``Cotangent Lift'' denotes the $~J$-equivariant algorithm mentioned in Section~\ref{sec:intro}, where a block diagonal basis of rank $2m$ is built as $~U=\mathrm{diag}\lr{~U_{qp},~U_{qp}}$ with $~U_{qp}$ defined by the rank $m$ SVD of a concatenated snapshot matrix $\lr{~Q|\alpha~P}$ formed from position snapshots $~Q$ and momentum snapshots $~P$ which are scaled by a constant factor $\alpha = \nn{~Q}_F/\nn{~P}_F$ to equalize their influence; ``Block $(q,p)$'' denotes the block diagonal mixed basis of rank $2m$ built from the separate rank $m$ SVDs of position and momentum snapshots, i.e. the Block $(q,p)$ basis is computed as $~U=\mathrm{diag}\lr{~U_{qq},~U_{pp}}$ where $~Q\approx~U_{qq}~\Sigma_{qq}~V_{qq}^\intercal$ and $~P\approx~U_{pp}~\Sigma_{pp}~V_{pp}^\intercal$ denote the relevant decompositions for snapshots $~Q,~P$ of position resp. momentum.  For completeness, the (scale-equalized) $~J$-equivariant ``Complex SVD'' algorithm described \cite{peng2016symplectic} is also presented in basis comparisons, though it is not considered in ROM building due to its near-identical performance to the cotangent lift.



\subsection{Toy Example: One-dimensional (1D) Wave}\label{sbsec:toy}
First, it is instructive to see a toy example where the issues with the variationally inconsistent ROM \eqref{eq:cotliftROM} do not arise.  Consider a 1D linear wave equation with constant speed 
$c\in\mathbb{R}$ and wavefunction $\phi:[0,s]\times [0,T]\to\mathbb{R}$,
\begin{equation}\label{eq:lin_wave_eqn}
\begin{split}
\phi_{tt}(s,t) &= c^2 \phi_{ss}(s,t), \qquad 0\leq s\leq l, \\
\phi(s,0) &= h(y(s)), \qquad \phi_t(s,0) = 0,
\end{split}
\end{equation}
where periodic boundary conditions are applied and where the initial condition $h:[0,\infty)\to\mathbb{R}$ is a cubic spline, 
\[ h(y) = \begin{cases} 1 - \frac{3}{2}y^2 + \frac{3}{4}y^3 & 0 \leq y \leq 1, \\ \frac{1}{4}\lr{2-y}^3 & 1<y\leq 2, \\ 0 & y > 2,
\end{cases} \qquad y(s) = \nn{s-\frac{1}{2}}. \]
This system is canonically Hamiltonian in the state variable $~x = \begin{pmatrix}q & p\end{pmatrix}^\intercal \eqqcolon \begin{pmatrix}\phi & \phi_t\end{pmatrix}^\intercal$ involving the wavefunction and its first time derivative, which can be verified by defining the Hamiltonian functional
\[ H\lr{~x} = \frac{1}{2}\int_0^l \lr{p^2 + c^2q_s^2}\, ds, \]
and writing \eqref{eq:lin_wave_eqn} in Hamiltonian form,
\begin{align}
    \dot{~x} = \begin{pmatrix}\dot{q} \\ \dot{p}\end{pmatrix} = \begin{pmatrix}0 & 1 \\ -1 & 0\end{pmatrix}\begin{pmatrix}-c^2\partial_{ss} & 0 \\ 0 & 1 \end{pmatrix}\begin{pmatrix}q \\ p\end{pmatrix} =~J\nabla H(~x).
\end{align}
Note that the gradient $\nabla H$ has been overloaded to mean the usual (Frechet) derivative of the functional $H$.  To apply Hamiltonian POD ROMs to this system, it is necessary to discretize it in a way which preserves its variational structure. One such way is to subdivide the spatial interval $[0,l]$ into $(M-1)$ subintervals of equal length, so that the position and momentum functions $q,p$ can be discretized along the $M$ grid points bounding these intervals.  Abusing notation, this produces a semidiscrete state $~x = \begin{pmatrix}~q & ~p\end{pmatrix}\in\mathbb{R}^N$ of dimension $N=2M$, with a corresponding discrete Hamiltonian
\[ H(~x) = \frac{1}{2}\sum_{i=1}^{N/2}\lr{ \lr{p^i}^2 + c^2 \frac{\lr{q^{i+1}-q^i}^2 + \lr{q^i-q^{i-1}}^2}{4\Delta x^2}}.\]
Applying the implicit midpoint rule for time integration then yields a fully discrete system for the state $~x_k$ at each time $k\Delta t$,
\begin{equation}\label{eq:disc_lin_wave_eqn}
\frac{~x_{k+1}-~x_{k}}{\Delta t} = ~J~A\lr{\frac{~x_{k+1}+~x_{k}}{2}} = \begin{pmatrix}~0 & ~I \\ -~I & ~0\end{pmatrix}\begin{pmatrix}-c^2~D_2 & ~0 \\ ~0 & ~I\end{pmatrix}\lr{\frac{~x_{k+1}+~x_k}{2}},
\end{equation}
where $~D_2$ denotes the usual circulant matrix resulting from a three-point stencil finite difference discretization of the 1D Laplace operator.  It is straightforward to check that the discrete Hamiltonian is exactly conserved along this evolution, so that \eqref{eq:disc_lin_wave_eqn} is the discrete Hamiltonian counterpart to \eqref{eq:lin_wave_eqn}.  For the examples here, the wave speed is fixed to $c=0.1$, the length to $l=1$, and the spatial domain is divided into $M=500$ equally sized intervals to yield a discrete state vector $~x$ of dimension $N=1000$.  

\subsubsection{Time-reproductive simulation}
To probe the performance of the various ROMs in a reproductive setting, snapshot data are collected every $\Delta t=0.02$ seconds on the interval $[0,10]$ and placed into a matrix $~X\in\mathbb{R}^{1000\times 501}$.  Applying different POD basis constructions in centered and uncentered cases leads to the snapshot energies and projection errors in Figure~\ref{fig:1D-POD-Energy}, where it is clear that adding basis modes to each method produces the slow-but-smooth error decay characteristic of problems with a large Kolmogorov $n$-width \cite{Pinkus:1985}.

\begin{figure}[htb]
    \centering
    \includegraphics[width=\textwidth]{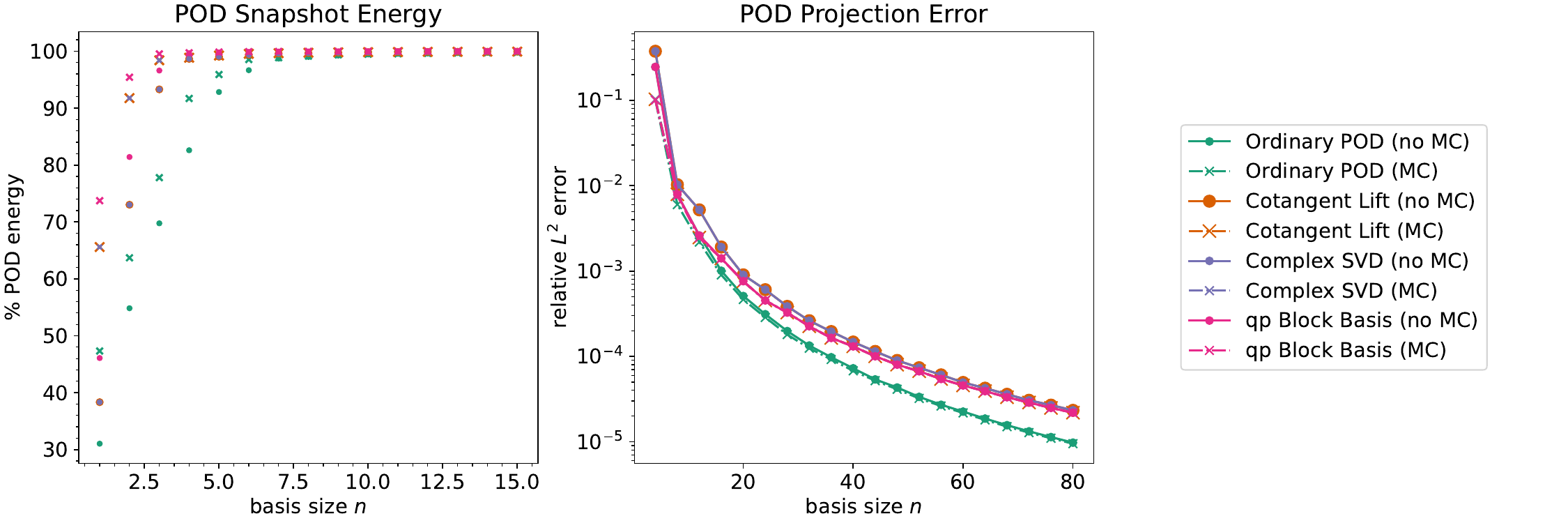}
    \caption{POD basis snapshot energies and projection errors corresponding to the toy 1D wave example in Section~\ref{sbsec:toy}.}
    \label{fig:1D-POD-Energy}
\end{figure}

Results of applying each ROM to the POD-projected version of \eqref{eq:disc_lin_wave_eqn} under the centered approximation $\tilde{~x}=~x_0+~U\hat{~x}$ are displayed in Figure~\ref{fig:1D-T10-Errors}, along with the errors in the Hamiltonian along each evolution.  Clearly, any algorithm is sufficient for producing an accurate ROM in this case, although there is a modest benefit to applying a Hamiltonian ROM. Interestingly, the variational inconsistency introduced by \eqref{eq:symROM} does not seem to harm ROM performance here, likely because this example is so simple.  Notably, the Block $(q,p)$ basis appears to lead to the most accurate ROMs, even though it did not have the lowest projection errors in Figure~\ref{fig:1D-POD-Energy}. On the OpInf side, it is clear that the re-projected OpInf ROMs match their intrusive counterparts exactly, as expected, while the vanilla OpInf ROMs are either very close to their intrusive counterparts or slightly unstable in the case of the OpInf G-ROM with block $(q,p)$ basis.  It is remarkable that both vanilla OpInf ROMs are slightly superconvergent in the Cotangent Lift case, so that ``non-Markovian'' effects in the learned operators are actually helpful here.  

The Hamiltonian errors in Figure~\ref{fig:1D-T10-Errors} appear as expected.  While all ROMs begin at the correct energy level due to the centering procedure employed, only the Hamiltonian ROMs are able to maintain this energy level to order $10^{-13}$ throughout their evolution.  Again, it does not seem that variational inconsistency harms performance here, as the inconsistent H-ROM preserves the Hamiltonian to the same degree as the consistent ones.  This is not exceptionally surprising: all that is required for energy conservation is skew-symmetry of the reduced symplectic matrix $\hat{~J}$, which is obviously satisfied by the matrix $\hat{~J}=~U^\intercal~J~U$ used in the inconsistent case.  Combined with the error plots in Figure~\ref{fig:1D-T10-Errors}, it is clear that there is remarkably little difference in overall performance between inconsistent and consistent H-ROMs in this example, confirming that it is not sophisticated enough to draw out such differences.

\begin{figure}[htb]
    \centering
    \begin{minipage}{\textwidth}
        \includegraphics[width=\textwidth]{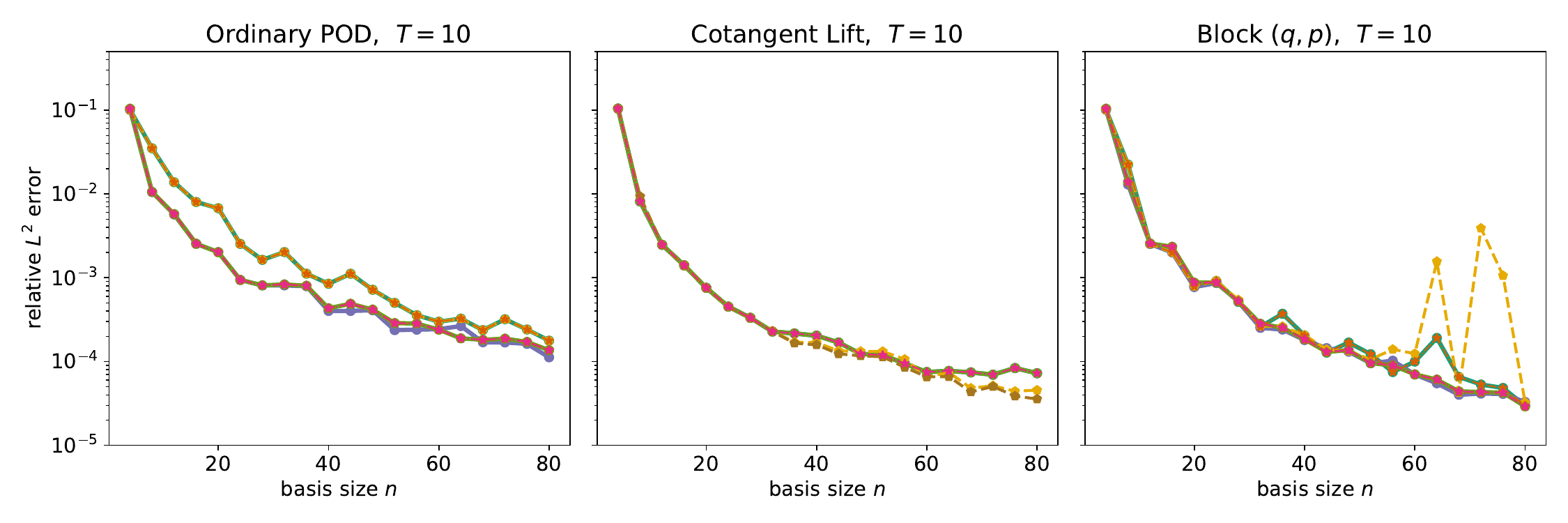}    
    \end{minipage}%
    \\
    \begin{minipage}{0.95\textwidth}
        \includegraphics[width=\textwidth]{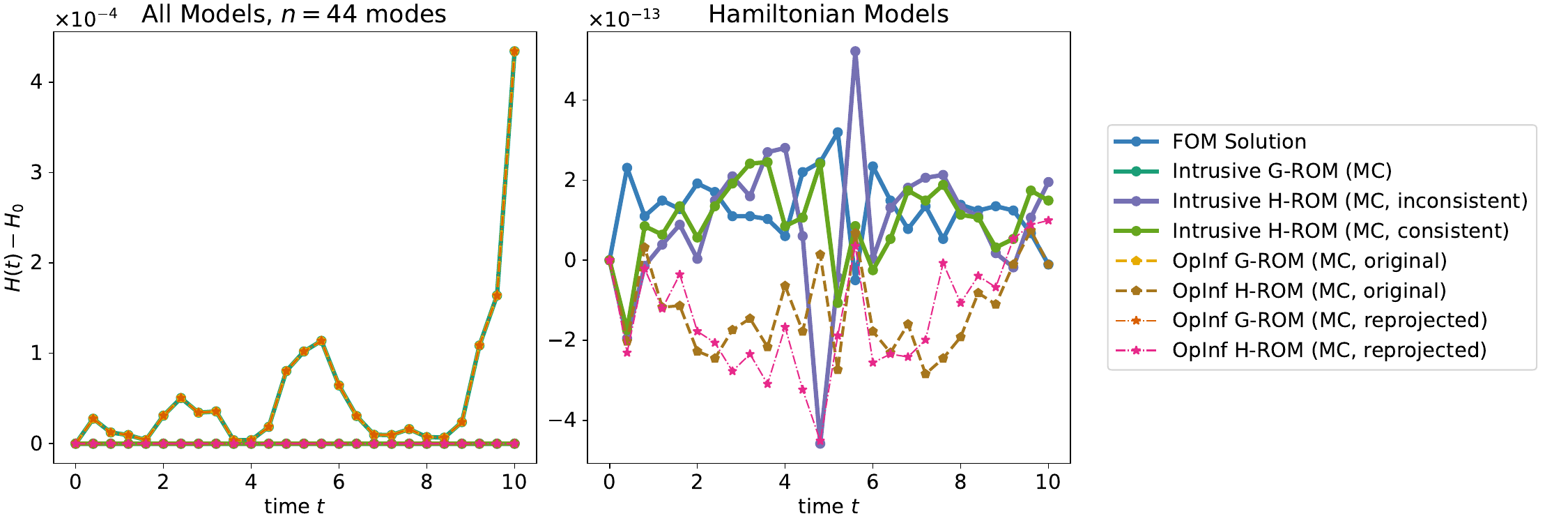}
    \end{minipage}
    \caption{ROM errors (top) and Hamiltonian errors (bottom) corresponding to the time-reproductive toy 1D wave example in Section~\ref{sbsec:toy}.  Hamiltonian error plots display ROMs using Ordinary POD basis.}
    \label{fig:1D-T10-Errors}
\end{figure}


\subsubsection{Time-predictive simulation}
While each ROM performed similarly in the time-reproductive case, differences start to emerge when the domain of application is extended to $[0,100]$ instead of the training domain $[0,10]$.  In this case, the same ROMs from above are tested on $[0,100]$ in intervals of $\Delta t=0.1$, a five-fold increase in time step and ten-fold increase in integration domain.  It is easy to see from the ROM errors and Hamiltonian errors in Figure~\ref{fig:1D-T100-Errors} that both the choice of POD basis and choice of ROM algorithm are important to performance.  Remarkably, the choice of centered approximation is also highly influential: experiments in \cite[Figure 6]{gruber2023canonical} show that OpInf G-ROMs will become unstable on this problem if centering is not applied.  

It is interesting to observe that there is more separation between the errors of the G-ROMs and H-ROMs in the time-predictive case, which makes sense as only the H-ROMs enforce the Hamiltonian structure necessary for correct dynamical evolution.  Remarkably, although the Block $(q,p)$ basis is again the most performant, it apparently leads to unstable G-ROMs as more modes are added.  Similar to before, it seems that re-projection makes little difference on the OpInf ROMs using the Ordinary POD basis, although its effect is noticeable in the other bases.  It is further interesting to note that the inconsistent H-ROM exhibits very slightly worse ROM error decay in the Ordinary POD case, perhaps displaying a consequence of its variational inconsistency.  The Hamiltonian errors displayed in Figure~\ref{fig:1D-T100-Errors} are remarkably similar to what is seen in the reproductive case, with all Hamiltonian ROMs conserving energy to the same order as the FOM throughout the entire length of time integration.

\begin{figure}[htb]
    \centering
    \begin{minipage}{\textwidth}
        \includegraphics[width=\textwidth]{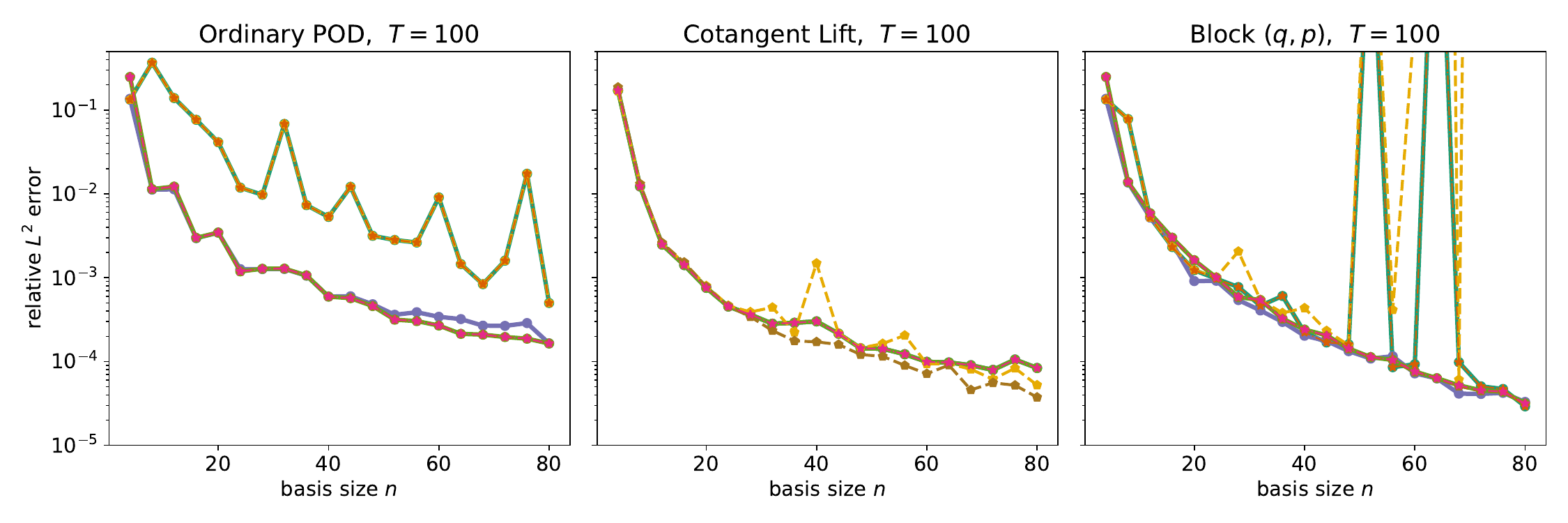}    
    \end{minipage}%
    \\
    \begin{minipage}{0.95\textwidth}
        \includegraphics[width=\textwidth]{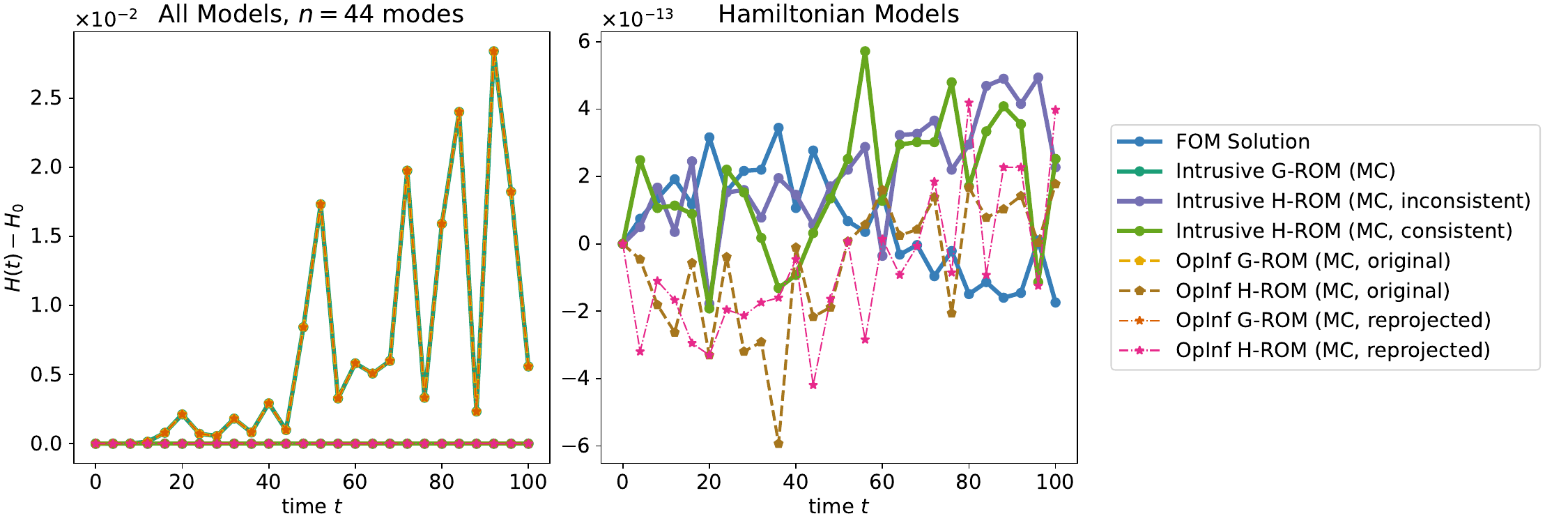}
    \end{minipage}
    \caption{ROM errors (top) and Hamiltonian errors (bottom) corresponding to the time-predictive toy 1D wave example in Section~\ref{sbsec:toy}.  Hamiltonian error plots display ROMs using Ordinary POD basis.}
    \label{fig:1D-T100-Errors}
\end{figure}

\subsection{Three-dimensional (3D) Linear Elastic Examples}\label{sbsec:linelastic}
The remaining examples are moderately sized problems from 3D linear elasticity, which has the following equations of motion in terms of a displacement vector $~q\in\mathbb{R}^3$, material density $\rho>0$, and Cauchy stress tensor $~\sigma\in\mathbb{R}^{3\times 3}$:
\begin{equation} \label{eq:3delasticity}
    \rho \ddot{~q} = \nabla \cdot  ~\sigma, \hspace{0.5cm} \text{on } \Omega \in \mathbb{R}^3.
\end{equation}
It will be assumed that the material in question is elastic and follows Hooke's law, so that the stress tensor $~\sigma$ has the explicit expression
\begin{equation*}
    ~\sigma = \lambda \text{Tr}(~\epsilon)~I + 2\mu ~\epsilon,
\end{equation*}
in terms of Lam\'{e} coefficients $\lambda, \mu >0$ and the symmetric strain tensor $~\epsilon \coloneqq \frac{1}{2} \left[ \nabla \bb{q} + (\nabla \bb{q})^\intercal \right]$.  It can be shown (see \cite{Marsden_BOOK_1998}) that the Hamiltonian functional for \eqref{eq:3delasticity} can be expressed concisely in terms of (noncanonical) position and velocity variables $\lr{~q,\dot{~q}}$: 
\begin{equation} \label{eq:3d_ham_lin_elastic}
    H(~q, \dot{~q}) = \frac{1}{2}\int_{\Omega} \lr{\rho |\dot{~q}|^2  + 
    \lambda [\text{Tr}(~\epsilon)]^2 + 2\mu \nn{~\epsilon}^2} dV.
\end{equation}
It is interesting to note that, in 1D, equation \eqref{eq:3d_ham_lin_elastic} reduces to the linear wave equation \eqref{eq:lin_wave_eqn} considered in Section \ref{sbsec:toy}. 
Considering 3D problems introduces challenging solution features that are difficult for ROMs to capture, e.g., waves propagating in multiple spatial directions over potentially complex 3D geometries (e.g., Figures \ref{fig:sm_meshes}(a) and (b)), which can complicate wave interaction.  For physical realism, these model geometries are assumed to be made of steel, and the material parameters $E = 200$ GPa (Young's modulus), $\nu = 0.25$ (Poisson's ratio), and $\rho = 7800$ kg/m$^3$ (density) lead to straightforward calculations for the Lam\'{e} coefficients in the stress tensor $~\sigma$ via the formulas $\lambda = \frac{E\nu}{(1+\nu)(1-2\nu)}$ and $\mu = \frac{E}{2(1+\nu)}$.  It is well-known \cite{Dhari:2024} that the Courant-Friedrichs-Lewy (CFL) condition for \eqref{eq:3delasticity} is given by 
\begin{equation} \label{eq:dt_wave}
\Delta t_w \leq \frac{h
}{c},
\end{equation}
where $h$ is the approximate average mesh resolution, and
$c = \sqrt{\frac{LM}{\rho}}$ is the speed of sound.
Here, $LM$ denotes the longitudinal elastic modulus, ``or P-wave modulus” as it is commonly referred to, which can be calculated from the elastic coefficients using the formula
\begin{equation} \label{eq:LM}
LM := \frac{E(1-\nu)}{(1+\nu)(1-2\nu)}.  
\end{equation}
It is well-known that time-steps less than the one given by \eqref{eq:dt_wave} are sufficient to resolve the highest frequency waves that the medium can propagate.  In view of this, the estimate \eqref{eq:dt_wave} is used as a guide to inform the FOM time-steps used in the numerical experiments presented below.



As per common practice in the field of solid mechanics, equation \eqref{eq:3delasticity} is discretized in space using the finite element method (FEM), yielding a semidiscrete system of the form \eqref{eq:Lagrangian} with $~f = ~0$,
where (again overloading notation) $~q \in \mathbb{R}^M$ is the discretized displacement field while $~M \in \mathbb{R}^{M \times M}$ and $~K \in \mathbb{R}^{M \times M}$ are the mass and stiffness matrices of the scheme, respectively.  Letting $~p \coloneqq ~M \dot{~q}$ define the momentum leads to the discrete Hamiltonian state $~x := \begin{pmatrix}~q & ~p \end{pmatrix}^\intercal \in \mathbb{R}^{N}$, and allows \eqref{eq:Lagrangian} to be written as the following canonical Hamiltonian system: 
\begin{equation} \label{eq:sm_first_order_sys}
    \dot{~x} = \begin{pmatrix} \dot{~q} \\ \dot{~p} \end{pmatrix} 
    = \left( \begin{array}{cc}
    ~0 & ~I \\
    -~I & ~0 
    \end{array}\right) \left( \begin{array}{cc}
    ~K & ~0 \\
    ~0 & ~M^{-1} 
    \end{array}\right)\begin{pmatrix} ~q \\ ~p \end{pmatrix}
    = ~J~A~x,
\end{equation}
where $~A=\nabla H$ is the discrete Hamiltonian gradient and $H$ is the quadratic discrete Hamiltonian,
\begin{equation*}
    H(~x) = \frac{1}{2} \left(~q^\intercal ~K~q + ~p^\intercal ~M^{-1}~p \right).
\end{equation*}



\begin{figure}
\begin{subfigure}[t]{.3\textwidth}\centering
  \includegraphics[width=\textwidth]{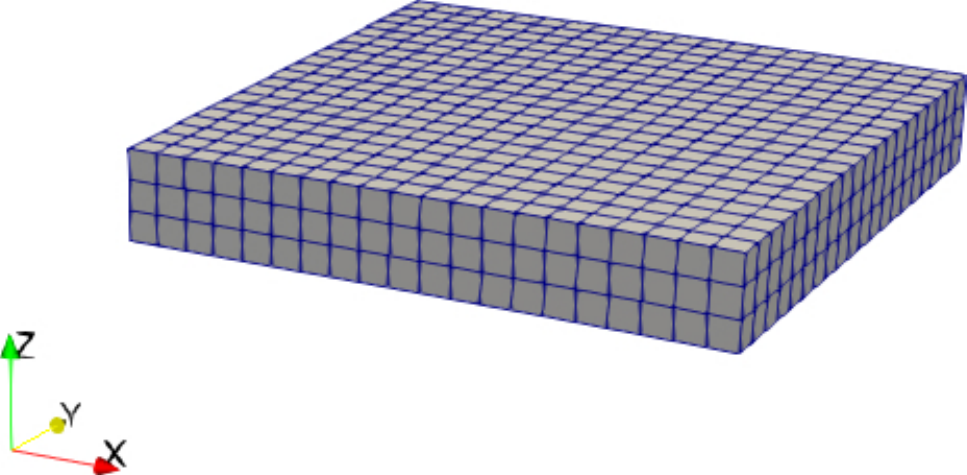}
  \caption{Cantilever Plate}
\end{subfigure}%
\hfill
\begin{subfigure}[t]{.3\textwidth}\centering
  \includegraphics[width=\textwidth]{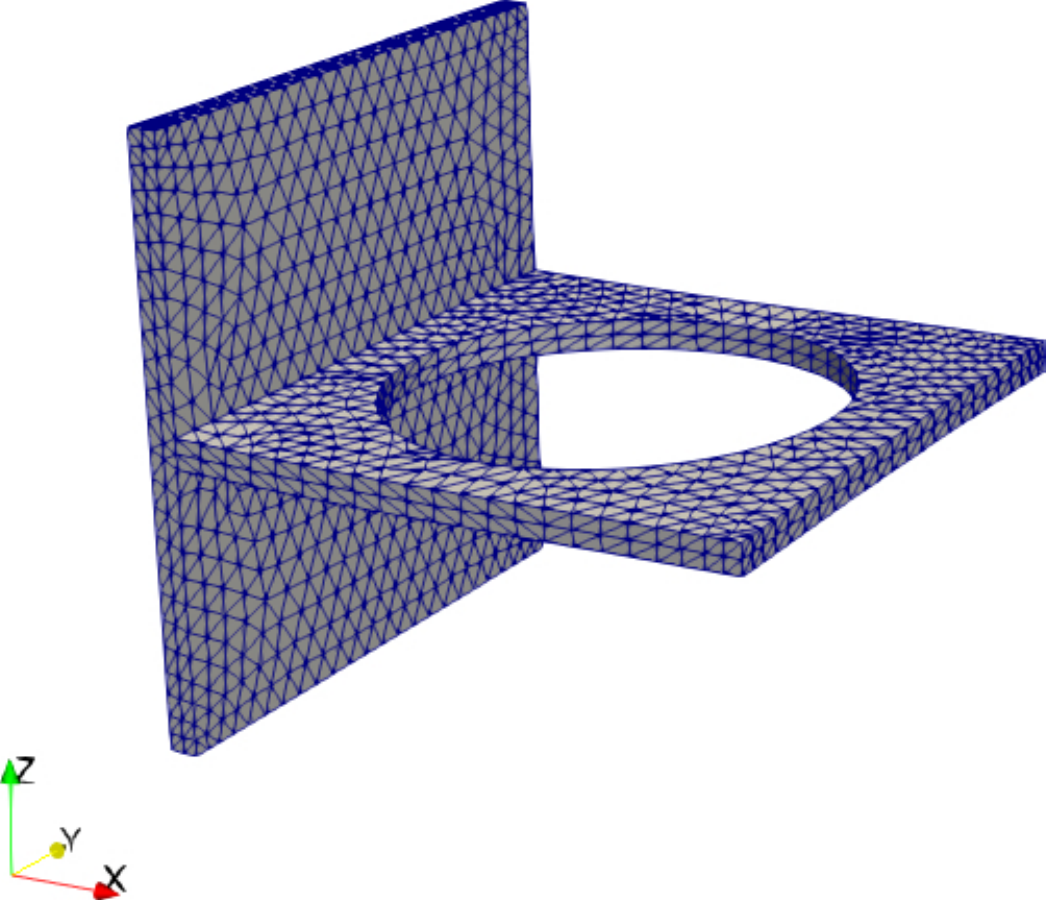}
  \caption{Flexible Bracket}
\end{subfigure}%
\hfill
\begin{subfigure}[t]{.3\textwidth}\centering
    \begin{tikzpicture}
    \point{begin}{0}{0};
    \point{middle}{2.5}{0};
    \point{end}{5}{0};
   \point{end2}{5}{-0.75}
    \beam{2}{begin}{end};
    \support{3}{begin}[-90];
    \load{1}{end}[-90];
    \notation{1}{end2}{$\dot{q}_{3}(0)=100$ m/s}[left];
    \foreach [evaluate={\in=180+\b*2}] \b in {5,10,...,30}{
      \draw[-, ultra thick] (begin) to[out=0,in=\in] (\b:5+\b*0.01);
    }
    \end{tikzpicture}
  \caption{One-dimensional cartoon}
\end{subfigure}
\caption{Geometries and meshes for the 3D linear elastic examples, along with a one-dimensional cartoon illustrating the problem setup.} 
\label{fig:sm_meshes}
\end{figure}

\subsubsection{Cantilever Plate}\label{sbsec:plate}
The first test case considered presently is a classical solid mechanics benchmark, also considered in \cite{gruber2023canonical}, involving a vibrating rectangular cantilever plate of size 0.2 $\times$ 0.2 $\times$ 0.03 meters, i.e. with spatial domain $\Omega = (0, 0.2) \times (0,0.2) \times (0, 0.03) \in \mathbb{R}^3$ (see Figure \ref{fig:sm_meshes}(a)).  Let $\bb{s}=\begin{pmatrix}s^1 & s^2 &s^3\end{pmatrix}^\intercal \in \mathbb{R}^3$ denote the position vector.  The left side of the plate is clamped in this benchmark, and a homogeneous Dirichlet boundary condition $~q = ~0$ is imposed on the left boundary $\Gamma_l:=\{s^2, s^3 \in \bar{\Omega}: s^1 = 0\}$.  The remaining boundaries of the domain $\Omega$ satisfy homogeneous Neumann boundary conditions, indicating that these boundaries are free surfaces.  An initial velocity of 100 m/s in the $s^3$-direction is prescribed on the right boundary of the domain, $\Gamma_r:=\{s^2,s^3 \in \bar{\Omega}:s^1 = 0.2\}$, so that $\dot{~q}\lr{~s,0} = (0, 0, 100)^\intercal$ for $s\in\Gamma_r$.  Figure~\ref{fig:sm_meshes}(c) displays a one-dimensional cartoon illustrating the problem setup, whereby the initial velocity perturbation will cause the plate to vibrate and undergo a flapping motion.  This vibration causes waves to form inside the plate, propagating in complex patterns across all three coordinate directions.  In particular, this represents a challenging test case for Hamiltonian model reduction which previous methods are unable to reliably solve (c.f. \cite[Figure 29]{gruber2023canonical}).


Building the FOM corresponding to system \eqref{eq:sm_first_order_sys} is accomplished using the open-source\footnote{Albany-LCM is available on Github at the following URL: \url{https://github.com/sandialabs/LCM}.} Albany-LCM multi-physics code base \cite{albany, Mota:2017, Mota:2022}.  The domain $\Omega$ is discretized with a uniform mesh of $20 \times 20 \times 3$ hexahedral elements, as shown in Figure \ref{fig:sm_meshes}(a), and the FOM system \eqref{eq:Lagrangian} is advanced forward from time $t=0$ to time $t = 2 \times 10^{-3}$ s to generate snapshots; here, a symplectic implicit Newmark time-stepping scheme with parameters $\beta = 0.25$, $\gamma = 0.5$, and time-step $\Delta t = 1.0 \times 10^{-7}$ s is used, and snapshots are collected every $1.0 \times 10^{-5}$ s.  It was verified that 
the time-step used in the FOM integration is below the time-step required to resolve the wave ($1.8\times 10^{-6}$ s, from \eqref{eq:dt_wave}).
The transformation $~p=~M\dot{~q}$ is applied to generate momentum data, and the resulting 201 snapshots of the discrete state $~x$, each of length 10584, are used to build the various POD bases.  Plots of the relevant snapshot energies and projection errors are shown in Figure~\ref{fig:Plate-POD-Energy}.  Notice in particular that the $~J$-equivariant constructions, i.e. the Cotangent Lift and Complex SVD, produce basis which are noticeably less expressive than the Ordinary POD basis, leading to projection errors which are several orders of magnitude higher after about 80 POD modes.

\vspace{1pc}

\noindent\textbf{Time-reproductive simulation.}
Again, the first test presented here involves a time-reproductive case, where the goal of each ROM is to reproduce the snapshots used for training.  The results of this experiment are shown in Figure~\ref{fig:Plate-T0002-Errors}, where it is immediately clear that preserving a variationally consistent Hamiltonian structure is necessary for adequate performance.  In each case it is observed that the inconsistent H-ROM produces order-one errors regardless of basis size, confirming that the least-squares projection used to enforce Hamiltonian structure is detrimental to ROM accuracy.  Conversely, the variationally consistent H-ROM in both its intrusive and nonintrusive versions converges smoothly at the rates expected based on the projection errors in Figure~\ref{fig:Plate-POD-Energy}.  Importantly, it can be seen that all ROMs built using the Cotangent Lift basis (which inherit variational consistency automatically) are unusable in this case, as this basis is not expressive enough to capture the variance in the solution snapshots.  

Besides its implications on ROM accuracy, notice that preservation of Hamiltonian structure is also crucial for ROM stability: each pictured G-ROM is unstable with the addition of basis modes even though some choices of reduced dimension $n$ lead to relatively accurate solutions.  This is an illustration of the commonly observed accuracy vs. stability trade-off seen in structure-preserving methods.  While there is nothing which prevents a straightforward G-ROM from being as accurate as any H-ROM (although this is not observed here), Lyapunov stability is guaranteed only by the H-ROMs.  The same goes for energy conservation:  Figure~\ref{fig:Plate-T0002-Errors} shows clearly that each Hamiltonian ROMs is conservative while the G-ROMs are not, and also that the variationally inconsistent H-ROM \eqref{eq:symROM} is almost as conservative as the variationally consistent one despite its very poor accuracy.


\begin{figure}[htb]
    \centering
    \includegraphics[width=\textwidth]{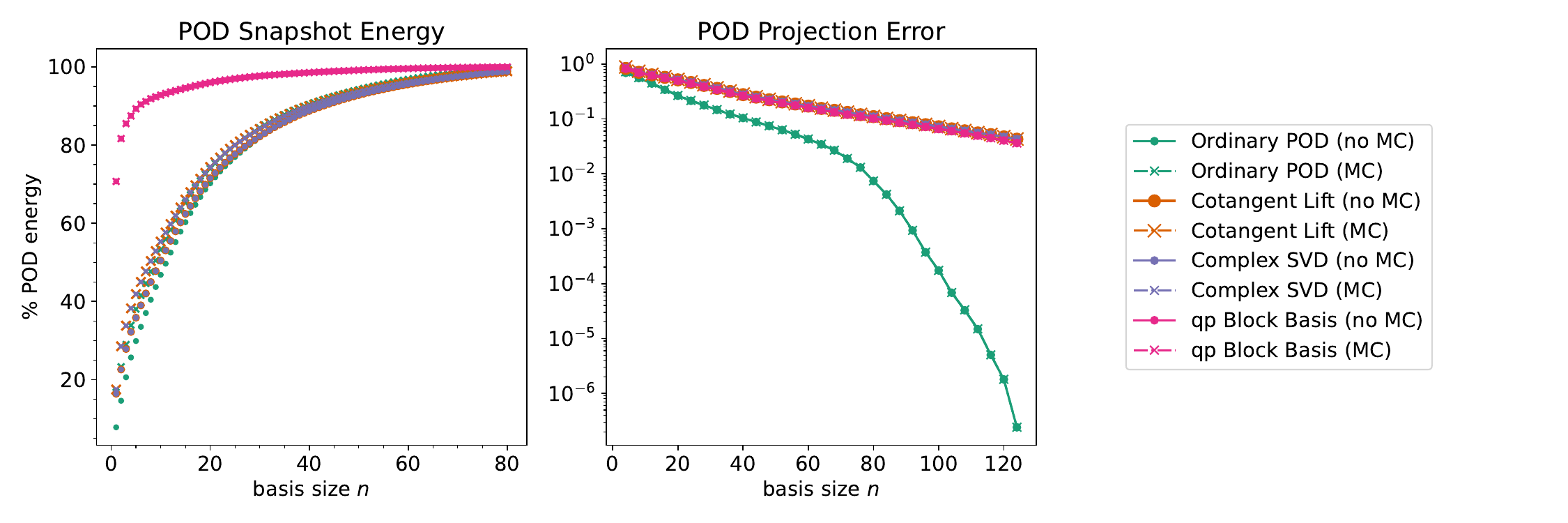}
    \caption{POD basis snapshot energies and projection errors corresponding to the cantilever plate example in Section~\ref{sbsec:plate}.}
    \label{fig:Plate-POD-Energy}
\end{figure}

\begin{figure}[htb]
    \centering
    \begin{minipage}{\textwidth}
        \includegraphics[width=\textwidth]{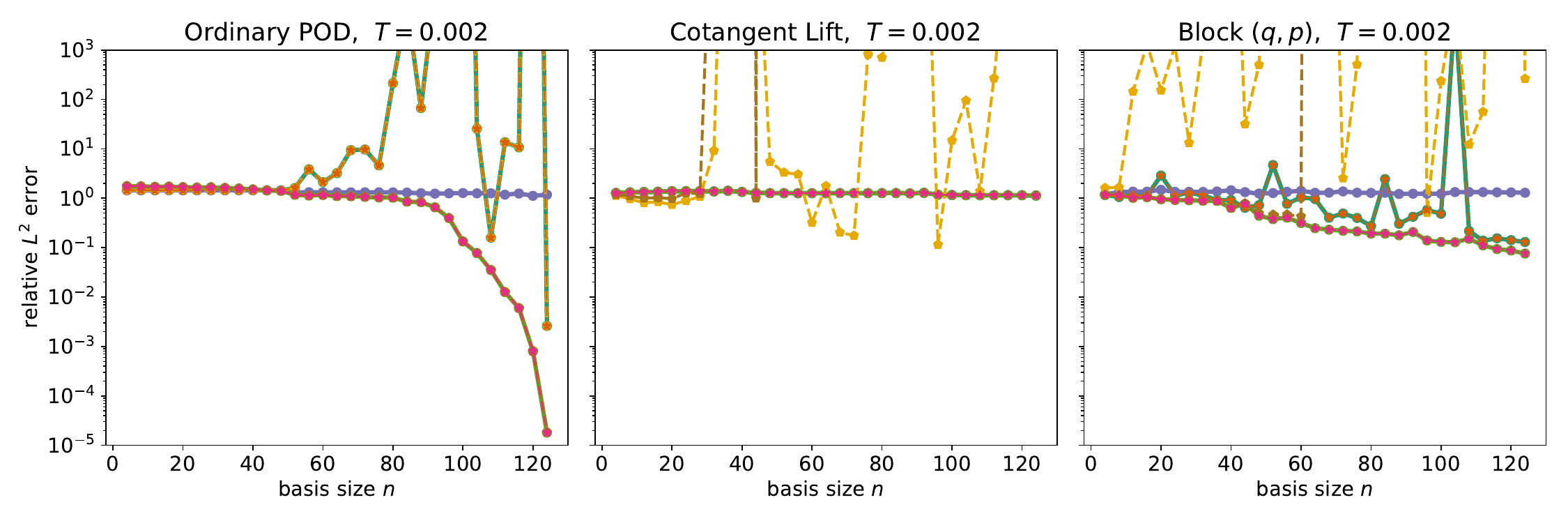}    
    \end{minipage}%
    \\
    \begin{minipage}{\textwidth}
        \includegraphics[width=\textwidth]{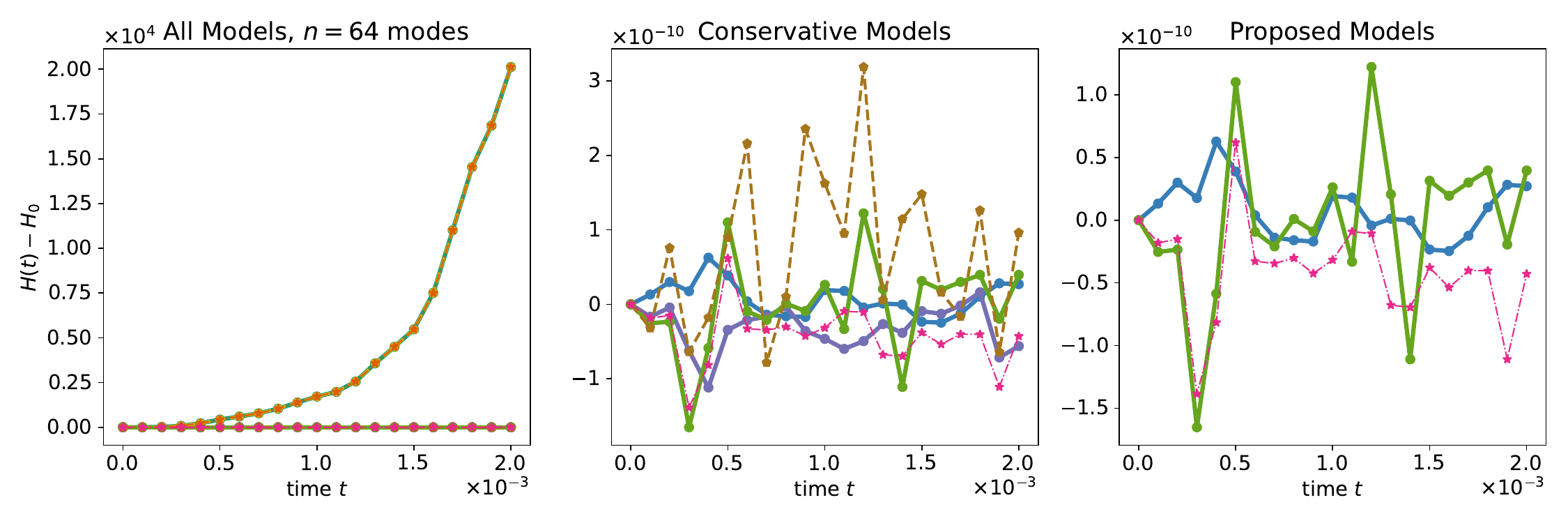}
    \end{minipage}%
    \\
    \begin{minipage}{\textwidth}
        \includegraphics[width=\textwidth]{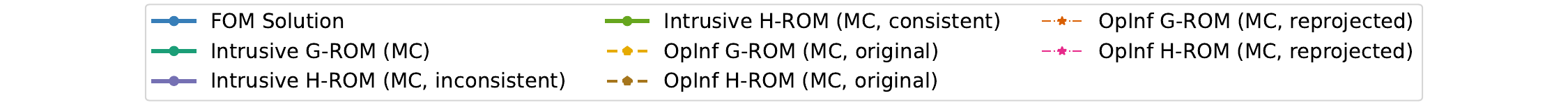}
    \end{minipage}
    \caption{ROM state errors (top) and Hamiltonian errors (bottom) corresponding to the time-reproductive case of the cantilever plate example in Section~\ref{sbsec:plate}.  Hamiltonian error plots display ROMs using Ordinary POD basis.}
    \label{fig:Plate-T0002-Errors}
\end{figure}

\vspace{1pc}

\noindent\textbf{Time-predictive simulation.}
Beyond the reproductive case seen above, it is worthwhile to examine what happens when the ROMs describing the cantilever plate are simulated well beyond where they have been trained.  Figure~\ref{fig:Plate-T0005-Errors} shows the results of this in the case that the final time is $T=5\times 10^{-3}$ with the same time step $\Delta t = 1.0 \times 10^{-5}$, amounting to a test interval which is $2.5\times$ longer than the training interval.  While this problem is periodic, note that these ranges are well below the period length of 0.02. This means that the dynamics on the unseen region $[0.002,0.005]$ are substantially different than those seen by the ROMs during training, making this problem more difficult than the similar predictive problem in \cite[Section 5.5]{gruber2023canonical}.  The error plots in Figure~\ref{fig:Plate-T0005-Errors} confirm that only the Hamiltonian models are capable of stable results on this test case, and that only the variationally consistent H-ROMs exhibit some semblance of convergence to a realistic solution.  This is further confirmed by examining the trajectory of a single point displayed in Figure~\ref{fig:Plate-T0005-Trajectories}, which visibly highlights the artifacts in the inconsistent H-ROM solution.   Again, it is clear from Figure~\ref{fig:Plate-T0005-Errors} that the Cotangent Lift basis is unusable here, although it is remarkable that each of the non-re-projected (i.e. original) OpInf ROMs is highly unstable despite the $~J$-equivariance of the reduced basis.  The plots of Hamiltonian error tell a similar story: while each G-ROM artificially adds energy to the system over time, the H-ROMs are conservative to the same order as the FOM.  Moreover, it is clear that variational consistency does not have much impact on conservation despite its large impact on accuracy.


\begin{figure}[htb]
    \centering
    \includegraphics[width=\textwidth]{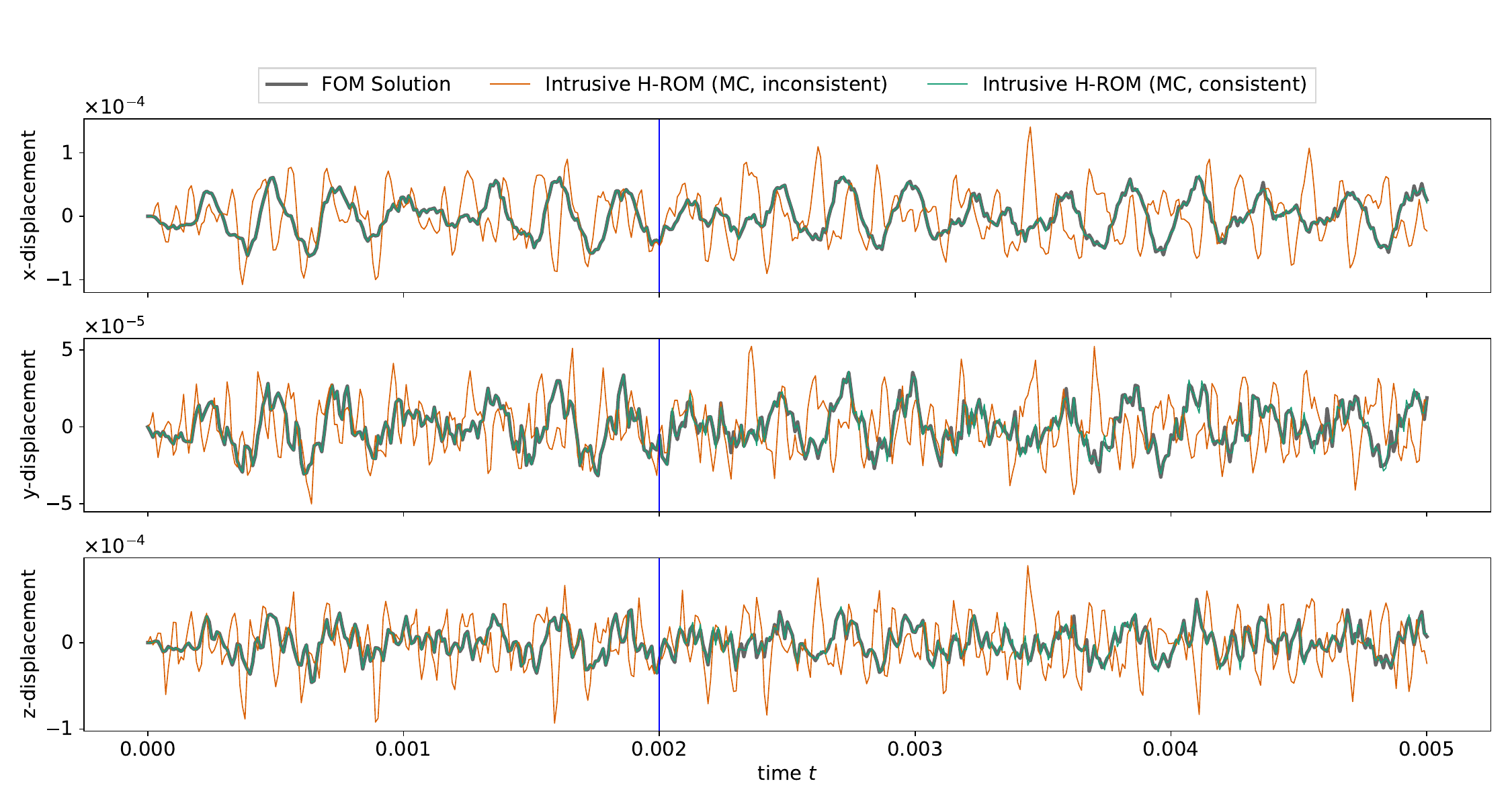}
    \caption{Time-histories of the $x$--, $y$-- and $z$--displacement solutions to the Cantilever Plate example at a single node with global node ID 1000 and spatial coordinates $(x,y,z) = (0.12, 0.05, 0.02)$.  The vertical blue line denotes the end of the training interval.  All ROMs were computed with an Ordinary POD basis of $n=100$ modes.  Note that the trajectories of the variationally consistent OpInf ROMs (not pictured) lie directly on top of the variationally consistent intrusive H-ROM.}
    \label{fig:Plate-T0005-Trajectories}
\end{figure}



\begin{figure}[htb]
    \centering
    \begin{minipage}{\textwidth}
        \includegraphics[width=\textwidth]{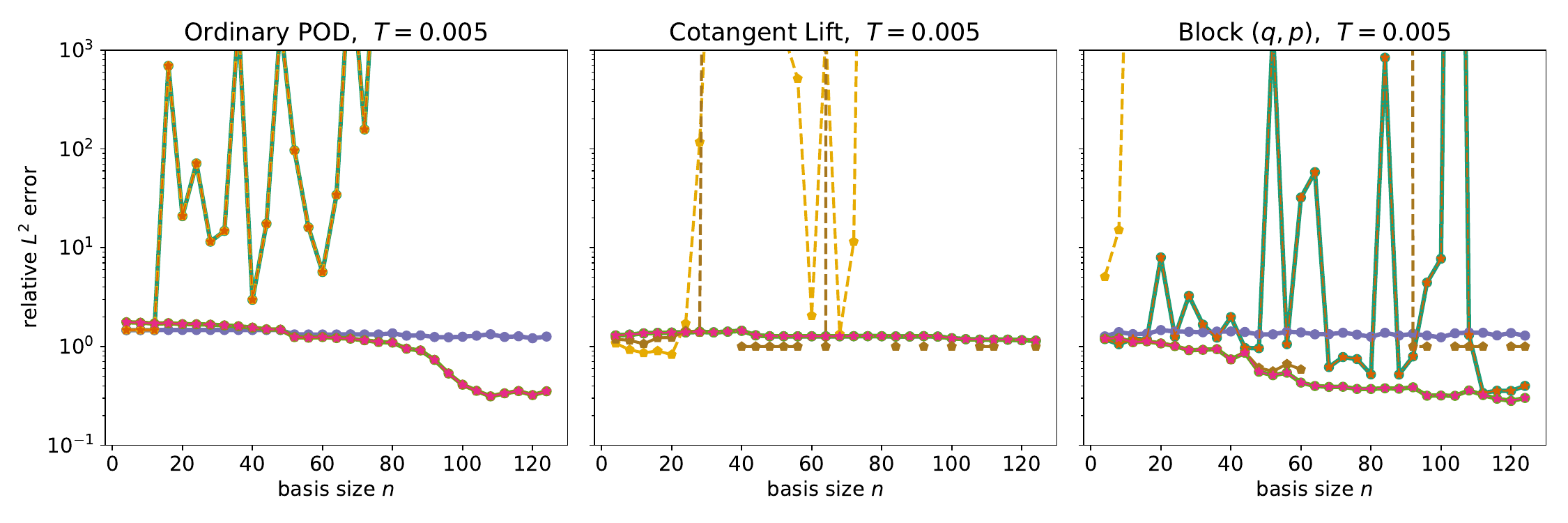}    
    \end{minipage}%
    \\
    \begin{minipage}{\textwidth}
        \includegraphics[width=\textwidth]{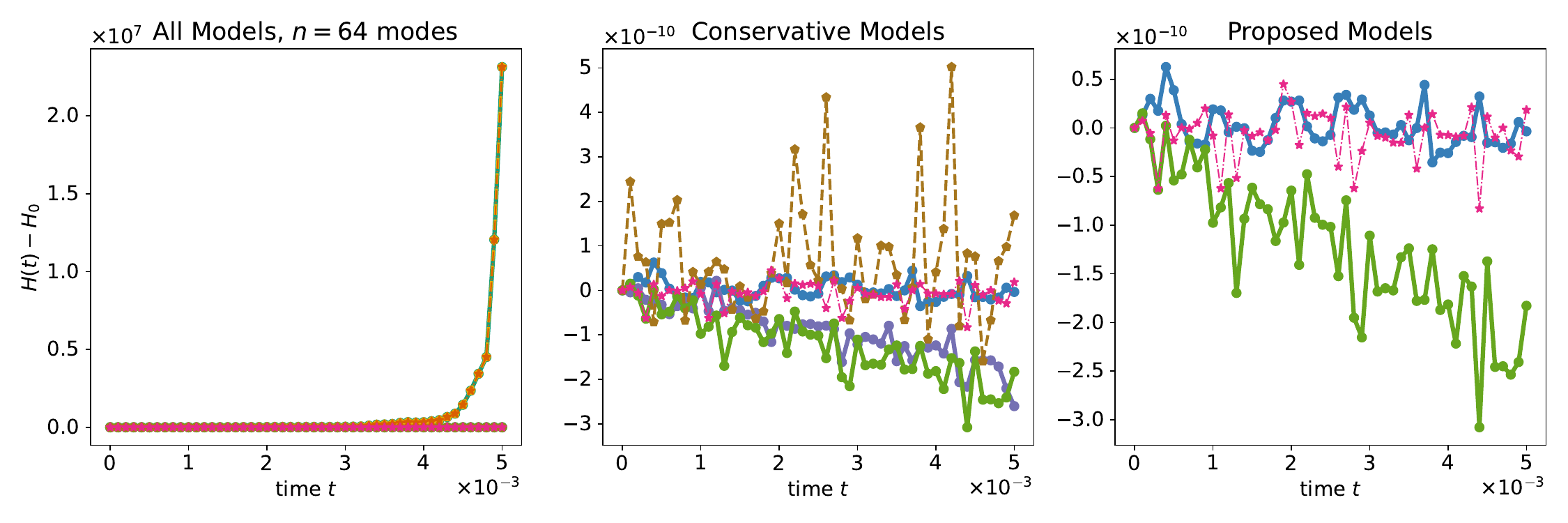}
    \end{minipage}%
    \\
    \begin{minipage}{\textwidth}
        \includegraphics[width=\textwidth]{figs/PlateHamilLegend.pdf}
    \end{minipage}
    \caption{ROM state errors (top) and Hamiltonian errors (bottom) corresponding to the time-predictive case of the cantilever plate example in Section~\ref{sbsec:plate}.  Hamiltonian error plots display ROMs using Ordinary POD basis.}
    \label{fig:Plate-T0005-Errors}
\end{figure}

\subsubsection{Flexible Bracket}\label{sbsec:bracket}

The final example presented here involves a flexible bracket clamped at one end.  Although the experimental setup 
is similar to the cantilever plate example, the problem is posed on a more complex geometry (see Figure \ref{fig:sm_meshes}(b)) featuring a circular hole whose presence weakens the bracket, as well as complicates the propagation and interaction of 3D waves propagating throughout the bracket.  Additionally, both the final integration time as well as the time interval at which snapshots are collected are widened considerably.  This has the effect of implicitly damping the oscillations in the metal, effectively making the dynamics easier to capture with a POD basis.
It is interesting to see
how this change, as well as the natural change caused by the inclusion of a more challenging mesh topology, affects ROM performance.  To facilitate further comparison, the straightforward Lagrangian ROM (c.f. \cite{Carlberg:2015}) arising from Galerkin projection onto a standard POD basis for the displacement $~q$ will be presented here alongside the Hamiltonian methods previously considered.  Note that this method returns approximations to the displacement $~q$, velocity $\dot{~q}$, and acceleration $\ddot{~q}$, and therefore the approximate momentum $\tilde{~p}=~M\dot{\tilde{~q}}$ is post-processed from the velocity approximation.

The domain on which the problem is posed is 20.5 cm wide in the $x$--direction and 20 cm wide in the $y$-- and $z$--directions.  The geometry is discretized using an unstructured tetrahedral mesh of 9640 elements, each with a mesh increment of approximately 5.8 mm  (Figure \ref{fig:sm_meshes}(b)).
 The FOM system \eqref{eq:Lagrangian} is advanced forward from time $t=0$ to time $t = 2 \times 10^{-2}$ s using $\Delta t = 1.0 \times 10^{-7}$ s,
and the larger sampling time $\Delta t = 1.0 \times 10^{-4}$ s is used for snapshot collection. It was verified that the time-step used to simulate the FOM was below the minimum time-step required to resolve the wave ($1.05\times 10^{-6}$s, from \eqref{eq:dt_wave}). 
As before, the transformation $~p=~M\dot{~q}$ is applied to generate momentum data, and the resulting 201 snapshots of the discrete state $~x$, each of length 15828, are used to build the various POD bases whose snapshot energies and projection errors are displayed in Figure~\ref{fig:Bracket-POD-Energy}.  Remarkably, it is again the case here that the Ordinary POD basis is much more expressive than the other constructions, exhibiting projection error which decay nearly to machine precision as basis modes are added.  Moreover, even the Lagrangian construction suffers here, perhaps due to its utilization of only position information.  For the associated ROMs, only reproduction of the snapshots in time is considered for this example, although all ROMs are simulated with and without centering to demonstrate the effect of this choice.  

Results reproducing the motion of the flexible bracket are shown in Figure~\ref{fig:Bracket-T02-NoMC-Errors} for the ROMs without centering and in Figure~\ref{fig:Bracket-T02-Errors} for the centered ROMs.  Notice that the accuracy of each ROM is remarkably insensitive to this choice, with both sets of ROMs displaying similar performance across every choice of basis.  It can also be seen that both the centered and uncentered ROMs preserve the Hamiltonian to the same order, although only the centered ROMs preserve this quantity at the value prescribed by the FOM (not visible because the difference in values is plotted).  Additionally, the results in both Figures show that only the ROMs built with the Ordinary POD basis are capable of exhibiting convergence down to order $10^{-11}$, while other bases are not expressive enough to lower the relative error beyond about $10^{-3}$.  The Lagrangian L-ROM in particular is quite stable but not highly performant.  Moreover, as in the plate example the variationally inconsistent H-ROM exhibits large errors at every basis size, while the consistent H-ROM developed here exhibits smooth convergence as expected by the projection errors in Figure~\ref{fig:Bracket-POD-Energy}.

Finally, several ROM solutions are plotted in in Figure \ref{fig:bracket_zdisp}.  Subfigures (b)--(d) show the $z$--displacement of the flexible bracket problem at the final time $t =2 \times 10^{-2}$ s, along with the FOM $z$--displacement solution in Subfigure (a) and the absolute errors between the ROM and FOM $z$--displacement solutions in Subfigures (e)--(g).  The $z$--displacements in Figure \ref{fig:bracket_zdisp} are scaled by a factor of 10 for visualization purposes, and all ROMs visualized included centering.  The reader can observe some spurious oscillations in the intrusive G-ROM solution (Figure \ref{fig:bracket_zdisp}(b)) with respect to the FOM solution (Figure \ref{fig:bracket_zdisp}(a)).  The situation is worse for the inconsistent H-ROM solution (Figure \ref{fig:bracket_zdisp}(c)), which has completely diverged from the FOM solution.  In contrast, the consistent H-ROM solution (Figure \ref{fig:bracket_zdisp}(d)) matches the FOM solution remarkably well.  These conclusions are supported by the absolute error plots shown in Figure \ref{fig:bracket_zdisp_errs}.

\begin{figure}[tb]
\begin{subfigure}[t]{.24\textwidth}\centering
  \includegraphics[width=\textwidth]{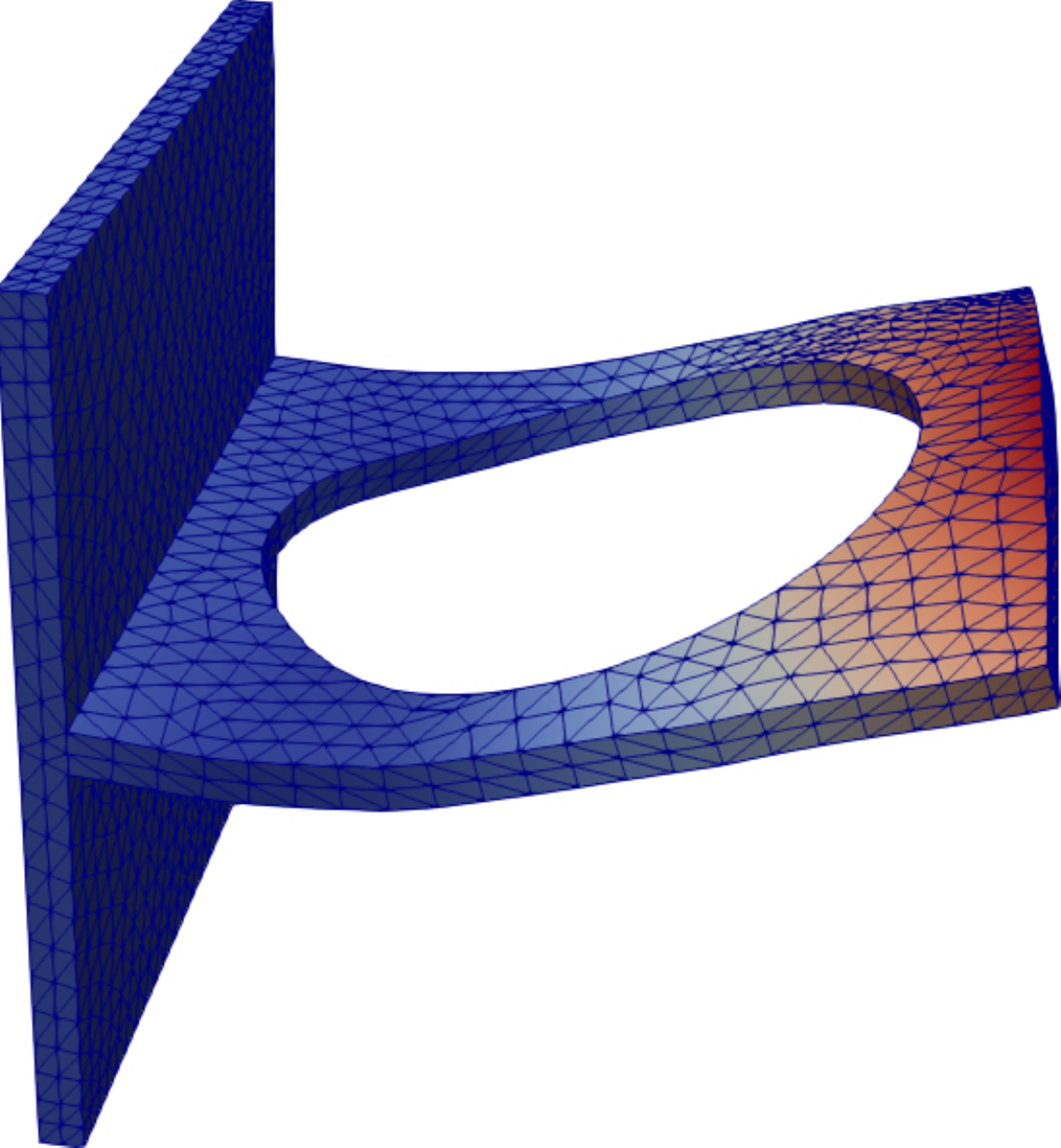}
  \caption{FOM}
\end{subfigure}%
\hfill
\begin{subfigure}[t]{.24\textwidth}\centering
  \includegraphics[width=\textwidth]{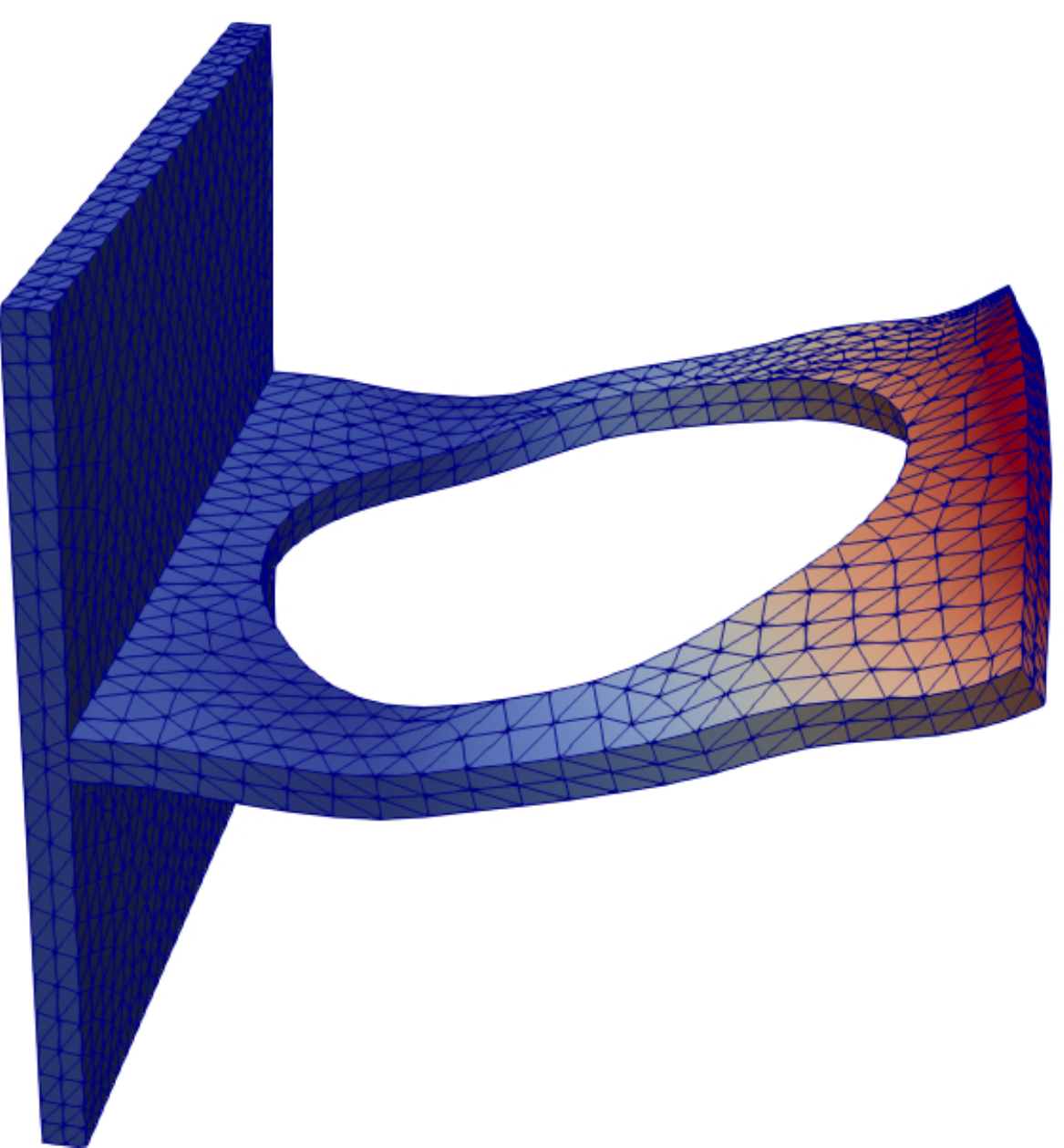}
  \caption{Intrusive G-ROM}
\end{subfigure}%
\hfill
\begin{subfigure}[t]{.24\textwidth}\centering
  \includegraphics[width=\textwidth]{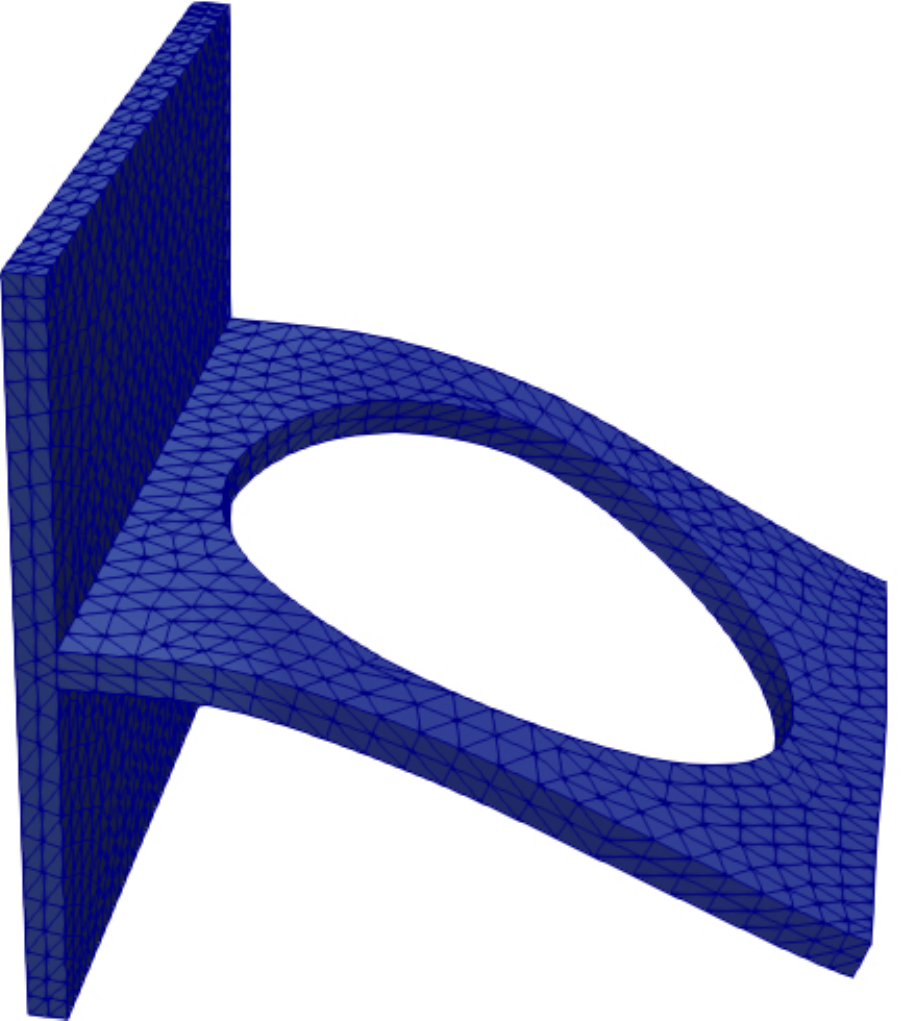}
  \caption{Intrusive H-ROM \\(inconsistent)}
\end{subfigure}%
\hfill
\begin{subfigure}[t]{.24\textwidth}\centering
  \includegraphics[width=\textwidth]{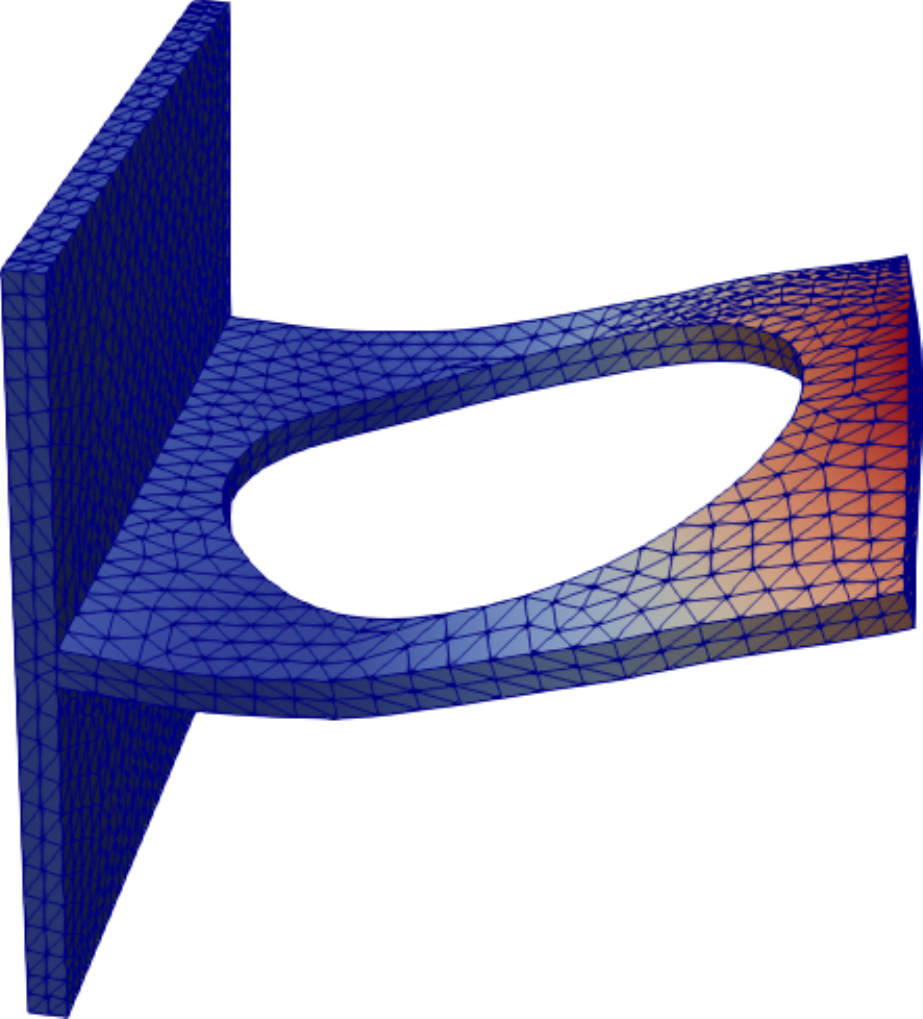}
  \caption{Intrusive H-ROM \\(consistent)}
\end{subfigure}
\caption{$z$--displacement solutions to the flexible bracket problem, scaled by a factor of 10 for plotting purposes, at the final time $t = 2 \times 10^{-2}$ s for the FOM (a) and various intrusive ROMs all calculated using centering (b)--(d). } 
\label{fig:bracket_zdisp}
\end{figure}


\begin{figure}[tb]
\begin{subfigure}[t]{.3\textwidth}\centering
  \includegraphics[width=\textwidth]{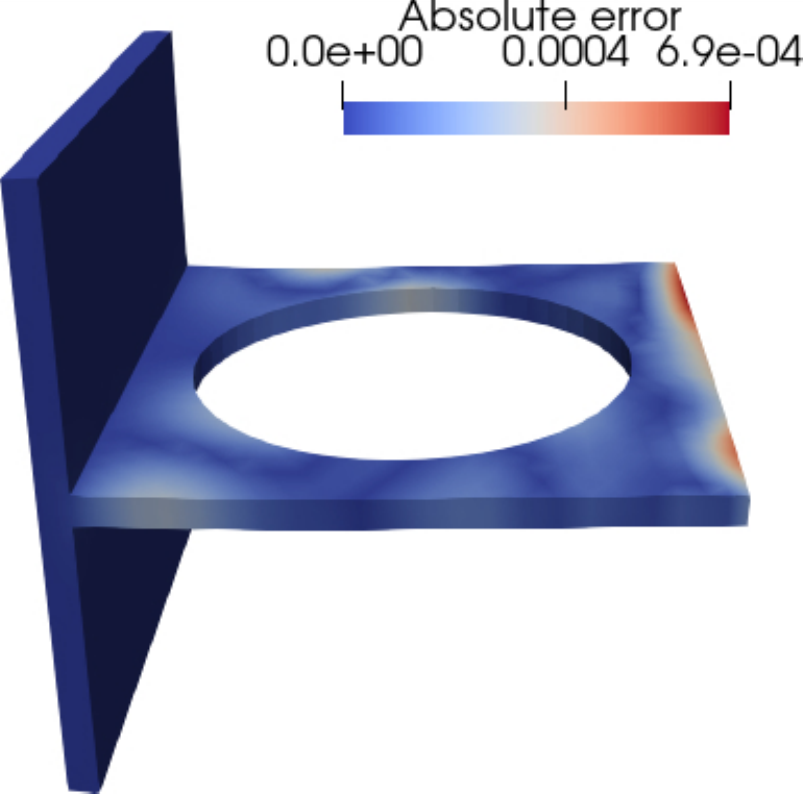}
  \caption{Intrusive G-ROM }
\end{subfigure}%
\hfill
\begin{subfigure}[t]{.3\textwidth}\centering
  \includegraphics[width=\textwidth]{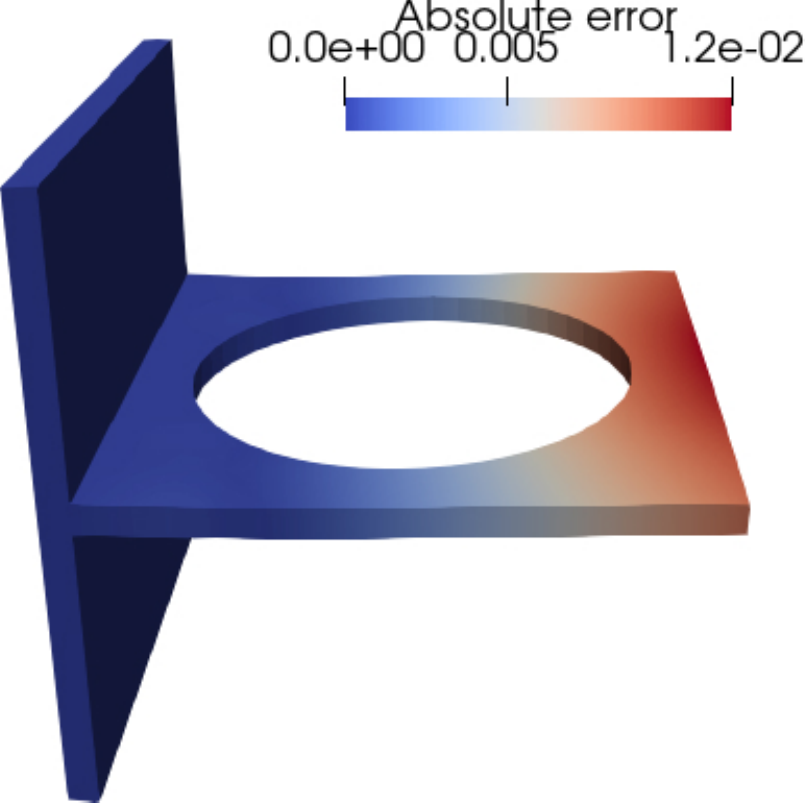}
  \caption{Intrusive H-ROM \\ (inconsistent)}
\end{subfigure}%
\hfill
\begin{subfigure}[t]{.3\textwidth}\centering
  \includegraphics[width=\textwidth]{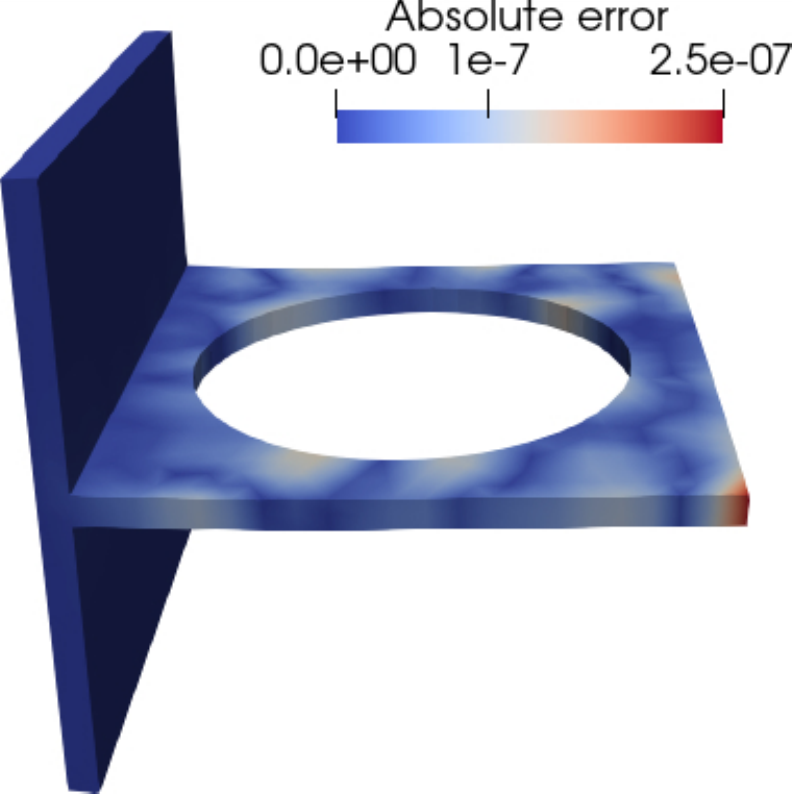}
  \caption{Intrusive H-ROM \\(consistent)}
\end{subfigure}%
\hfill
\caption{Spatial distribution of the absolute errors in the $z$--displacement for various ROM solutions to the flexible bracket problem, evaluated (with centering) at the final time $t=2 \times 10^{-2}$ s. } 
\label{fig:bracket_zdisp_errs}
\end{figure}

\begin{figure}[htb]
    \centering
    \includegraphics[width=\textwidth]{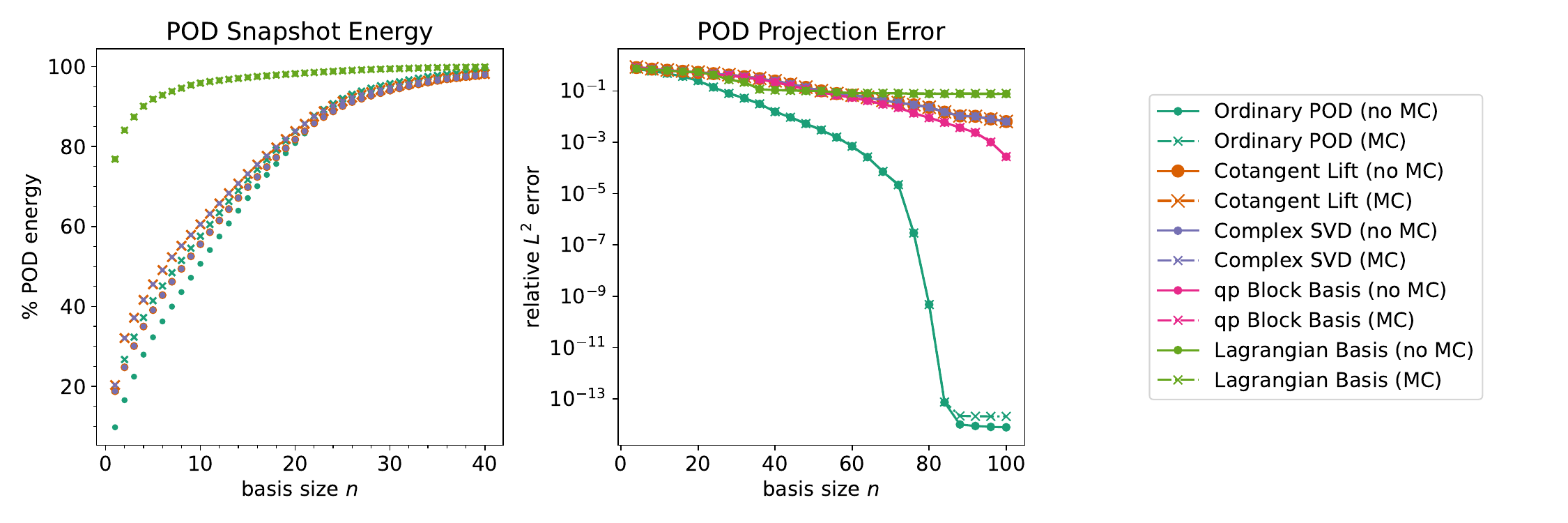}
    \caption{POD basis snapshot energies and projection errors corresponding to the flexible bracket example in Section~\ref{sbsec:bracket}.}
    \label{fig:Bracket-POD-Energy}
\end{figure}


\begin{figure}[htb]
    \centering
    \begin{minipage}{\textwidth}
        \includegraphics[width=\textwidth]{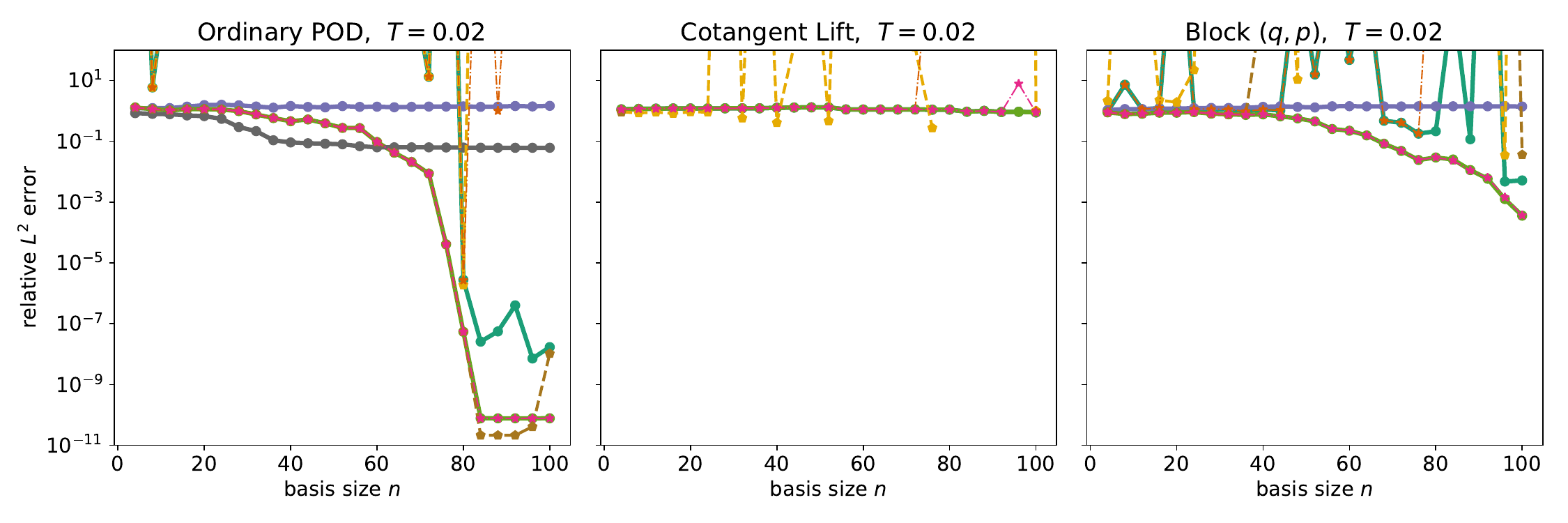}    
    \end{minipage}%
    \\
    \begin{minipage}{\textwidth}
        \includegraphics[width=\textwidth]{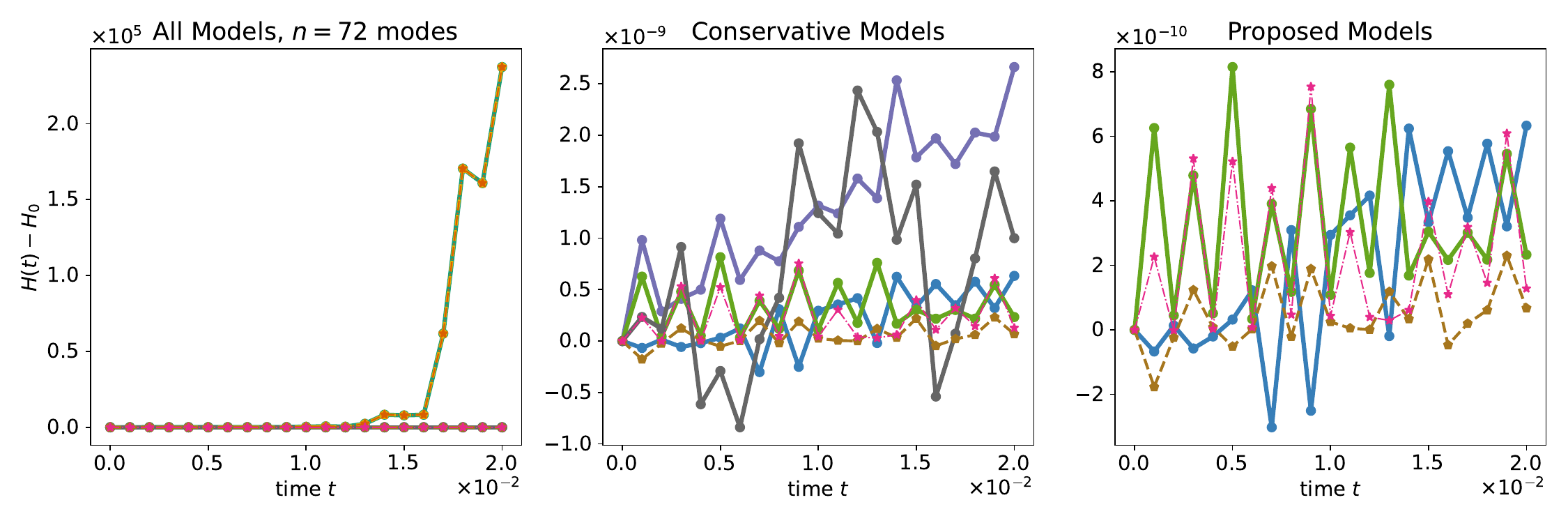}
    \end{minipage}%
    \\
    \begin{minipage}{\textwidth}
    \includegraphics[width=\textwidth]{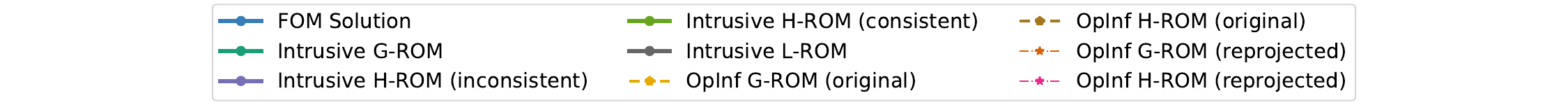}    
    \end{minipage}
    \caption{ROM state errors (top) and Hamiltonian errors (bottom) corresponding to the flexible bracket example in Section~\ref{sbsec:bracket}, without ROM centering.}
    \label{fig:Bracket-T02-NoMC-Errors}
\end{figure}

\begin{figure}[htb]
    \centering
    \begin{minipage}{\textwidth}
        \includegraphics[width=\textwidth]{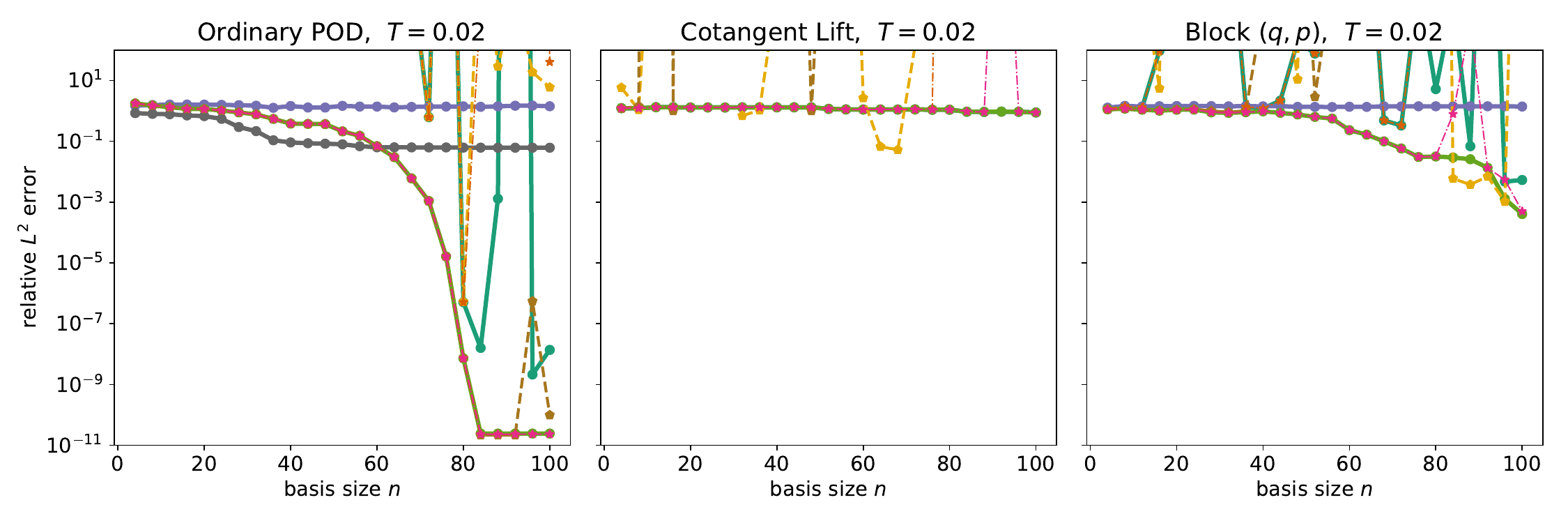}    
    \end{minipage}%
    \\
    \begin{minipage}{\textwidth}
        \includegraphics[width=\textwidth]{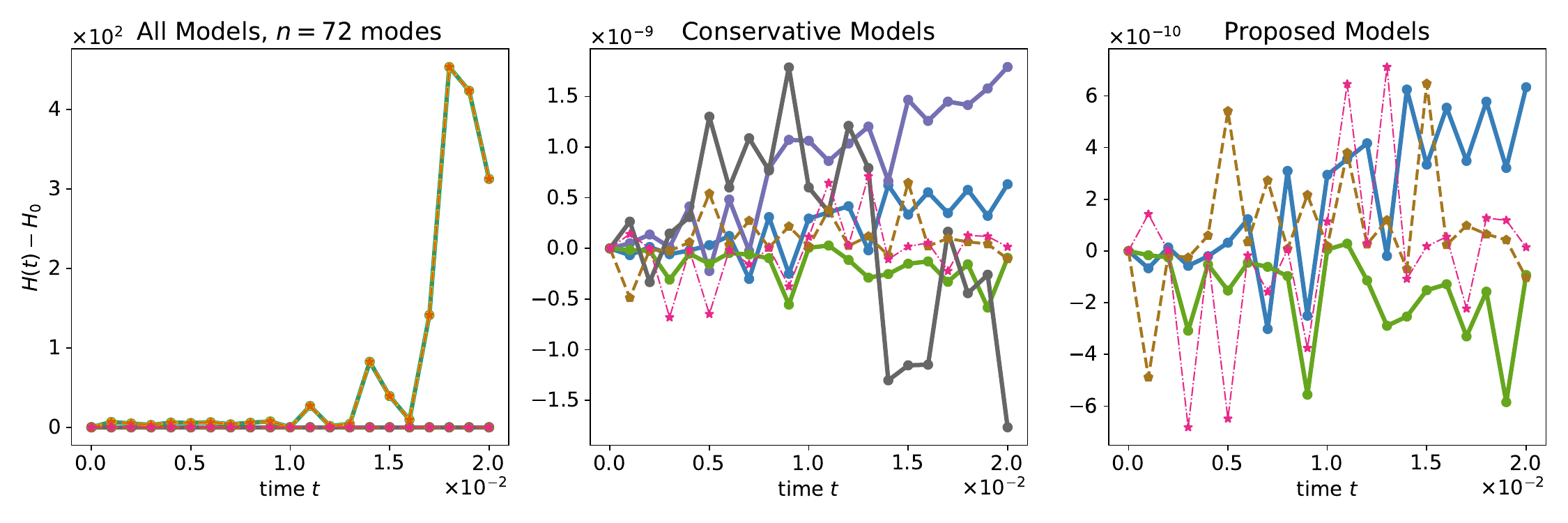}
    \end{minipage}%
    \\
    \begin{minipage}{\textwidth}
    \includegraphics[width=\textwidth]{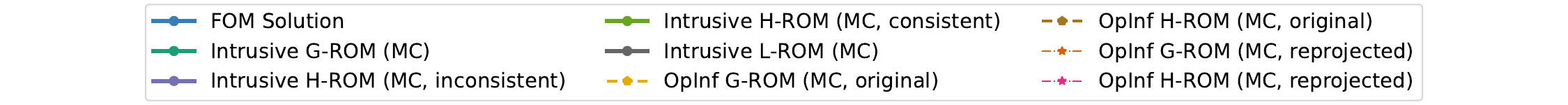}    
    \end{minipage}
    \caption{ROM state errors (top) and Hamiltonian errors (bottom) corresponding to the flexible bracket example in Section~\ref{sbsec:bracket}, with ROM centering.}
    \label{fig:Bracket-T02-Errors}
\end{figure}





\section{Conclusion} \label{sec:conc}
A variationally consistent method for the model reduction of canonical Hamiltonian systems has been proposed which is compatible with any linear orthonormal reduced basis and applicable in both intrusive and nonintrusive settings.  In addition to exhibiting exact energy conservation and symplecticity, the proposed method does not require additional considerations beyond Petrov-Galerkin orthogonality and has been shown to produce accurate solutions in situations where previous methods cannot.  Error estimates have been proven which interpret the error in the proposed method in terms of a balance between the usual basis projection error and an additional term arising from the failure of the reduced system to be canonically Hamiltonian.  Since canonical Hamiltonian systems of appropriate regularity necessarily have a natural Lagrangian counterpart which is Legendre-dual, future work will compare and contrast the approach presented here with Lagrangian ROMs based on the second-order equations of motion.  It is further interesting to consider the extension of these ideas to nonlinear problems, quadratic or nonlinear bases, and noncanonical Hamiltonian systems where there is no Lagrangian analogue.  It is hoped that the work presented here will provide a useful starting point for researchers who wish to deploy ROMs on large-scale Hamiltonian systems with complex physics and realistic material parameters.  


\section*{Acknowledgements}
Sandia National Laboratories is a multimission laboratory managed and operated by National Technology \& Engineering Solutions of Sandia, LLC, a wholly owned subsidiary of Honeywell International Inc., for the U.S. Department of Energy’s National Nuclear Security Administration under contract DE-NA0003525. This paper describes objective technical results and analysis. Any subjective views or opinions that might be expressed in the paper do not necessarily represent the views of the U.S. Department of Energy or the United States Government. This article has been co-authored by an employee of National Technology \& Engineering Solutions of Sandia, LLC under Contract No. DE-NA0003525 with the U.S. Department of Energy (DOE). The employee owns all right, title and interest in and to the article and is solely responsible for its contents. The United States Government retains and the publisher, by accepting the article for publication, acknowledges that the United States Government retains a non-exclusive, paid-up, irrevocable, world-wide license to publish or reproduce the published form of this article or allow others to do so, for United States Government purposes. The DOE will provide public access to these results of federally sponsored research in accordance with the DOE Public Access Plan https://www.energy.gov/downloads/doe-public-access-plan.  The work of Anthony Gruber was supported by the John von Neumann fellowship at Sandia National Laboratories.  The work of Irina Tezaur was supported by the U.S. Presidential Early Career Award for Scientists and Engineers (PECASE) and the Multifaceted Mathematics for Predictive Digital Twins (M2dt) project under the U.S. Department of Energy's Office of Advanced Scientific Computing Research.  The authors thank Eric Parish for helpful conversations on variational consistency in ROM, Elise Walker for careful proofreading and suggestions regarding the presentation of Proposition~\ref{prop:nondegen}, and Alejandro Mota for helping with the setup of the 3D elasticity test cases presented herein and simulated using the Albany-LCM code.

\bibliographystyle{abbrv}
\bibliography{biblio.bib}

\end{document}